\title[{\tiny Stability conditions on some product threefolds}]{Stability conditions on product threefolds of projective spaces and Abelian varieties}
\date{}
\author{Naoki Koseki}
\theoremstyle{plain}
\newtheorem{thm}{Theorem}[section]
\newtheorem{prop}[thm]{Proposition}
\newtheorem{lem}[thm]{Lemma}
\newtheorem*{thm*}{Theorem}
\theoremstyle{definition}
\newtheorem{defin}[thm]{Definition}
\newtheorem{conj}[thm]{Conjecture}
\newtheorem*{NaC}{Notation and Convention}
\newtheorem*{ACK}{Acknowledgement}
\theoremstyle{remark}
\newtheorem{rmk}[thm]{Remark}
\newtheorem{ex}[thm]{Example}
\DeclareMathOperator{\ch}{ch}
\DeclareMathOperator{\id}{id}
\DeclareMathOperator{\Hom}{Hom}
\DeclareMathOperator{\Coh}{Coh}
\DeclareMathOperator{\NS}{NS}
\DeclareMathOperator{\ext}{ext}
\DeclareMathOperator{\Ext}{Ext}
\DeclareMathOperator{\Image}{Image}
\begin{document}
\maketitle

\begin{abstract}
In this paper, we prove 
the original Bogomolov-Gieseker type inequality 
conjecture for $\mathbb{P}^1 \times S, 
\mathbb{P}^2 \times C$, and 
$\mathbb{P}^1 \times \mathbb{P}^1 \times C$, 
where $S$ is an Abelian surface and 
$C$ is an elliptic curve.  
In particular, there exist Bridgeland stability 
conditions on these threefolds. 
\end{abstract}

\setcounter{tocdepth}{1}
\tableofcontents

%======================================================================--
%=========================================================================

\section{Introduction}

\subsection{Motivation and results}
The notion of stability conditions 
on a triangulated category was 
introduced by Bridgeland in his paper \cite{bri07}. 
Bridgeland stability condition is a mathematical 
subject realizing Douglas' $\Pi$-stability 
in string theory \cite{dou01a}, \cite{dou01b}, \cite{dou02}. 
It gives us new points of view 
in various scenes, such as 
birational geometry, counting invariants, Mirror 
symmetry, and so on 
(cf. \cite{abch13}, \cite{bmt14b}, \cite{bm14a}, \cite{bm14c}, \cite{bm14b}, 
\cite{bmt14a}, \cite{tod10}, \cite{tod13a}, \cite{tod13b}, \cite{tod14}). 

Constructing stability conditions 
on the derived category of coherent sheaves 
of a given smooth projective variety $X$ 
is a starting problem for such applications. 
When  $\dim X \leq 2$, the standard construction 
of stability conditions on $D^b(X)$ was given in 
\cite{bri08} and \cite{ab13}. 
In the case when $\dim X=3$, the construction problem of 
stability conditions on $D^b(X)$ is still open in general. 
In the paper \cite{bmt14a}, Bayer, Macr{\`i} and Toda 
proposed a conjectural approach for this problem. 
The problem was reduced to the conjectural 
Bogomolov-Gieseker (BG) type inequality for 
Chern characters (involving the third part of the Chern character) of 
certain semistable objects (called tilt-semistable objects) 
in the derived category. 
It is known that the original BG inequality conjecture holds for 
Abelian threefolds 
(cf. \cite{mp16a}, \cite{mp16b}, \cite{bms14}), 
Fano threefolds of Picard rank one 
(cf. \cite{bmt14a}, \cite{mac14}, \cite{sch14}, \cite{li15}), 
some toric threefolds (cf. \cite{bmsz16}), 
and their \'etale quotients (cf. \cite{mms09}). 

However, counter-examples for 
the original BG inequality conjecture 
were constructed in the case when 
$X$ is the blow-up of a smooth projective threefold 
at a point (cf. \cite{mar16}, \cite{sch16}). 
Furthermore, by using the argument of \cite{mar16}, 
we can show that the BG inequality conjecture does not 
hold even when $X$ is a Calabi-Yau threefold 
containing a plane. 
See Appendix A of this paper. 
Hence we need to modify the inequality in general. 
In this direction, it was shown that some modified 
versions of the BG inequality conjecture hold for every 
Fano threefolds (cf. \cite{bmsz16}). 
On the other hand, it seems still important to 
study for which variety the original BG 
inequality conjecture holds. 
In this paper, we give three new examples 
which satisfy the original BG inequality conjecture: 

\begin{thm}
Let $X$ be $\mathbb{P}^1 \times S$, 
$\mathbb{P}^2 \times C$, or 
$\mathbb{P}^1 \times \mathbb{P}^1 \times C$, 
where $S$ is an Abelian surface and 
$C$ is an elliptic curve. 
Then the original BG inequality conjecture holds for $X$. 
\end{thm}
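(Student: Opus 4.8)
The plan is the standard two-step one: reduce the BMT conjecture to a Bogomolov--Gieseker type inequality for tilt-semistable objects, and then verify that inequality by exploiting the product structure $X=Y\times Z$, where $Y$ is a product of projective spaces and $Z$ is the Abelian factor ($S$ or $C$). By \cite{bmt14a} together with the refinements of \cite{bms14}, it is enough to prove that for every $(\omega,B)\in\Amp(X)_{\mathbb{R}}\times\NS(X)_{\mathbb{R}}$ and every $\nu_{\omega,B}$-tilt-semistable object $E\in\Coh^{\omega,B}(X)$ the quadratic form $Q_{\omega,B}$ of \cite{bms14} is nonnegative on $E$ (equivalently, the third Chern character inequality of \cite{bmt14a} holds whenever $\nu_{\omega,B}(E)=0$). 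Since $h^{1}(\mathcal{O}_Y)=0$, the K\"unneth formula gives $\NS(X)=\NS(Y)\oplus\NS(Z)$ and $\ch(E)=\sum_{p,q}\ch_{p,q}(E)$ with $\ch_{p,q}(E)$ of bidegree $(p,q)$ in $H^{2\bullet}(Y)\otimes H^{2\bullet}(Z)$, so that $(\omega,B)$ is automatically of product type, $\omega=\omega_Y+\omega_Z$ and $B=B_Y+B_Z$ with $\omega_Y\in\Amp(Y)$, $\omega_Z\in\Amp(Z)$, and $Q_{\omega,B}(E)$ becomes a polynomial in these data whose coefficients are pairings of the $\ch_{p,q}(E)$.

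The geometric input then comes from the well-understood derived categories of the two factors. On the Abelian side I would use the isogenies $[m]\colon Z\to Z$, which give \'etale morphisms $f_m=\id_Y\times[m]\colon X\to X$; one checks that $f_m^{*}$ carries the tilt heart for $(\omega,B)$ to the one for $(\omega_Y+m^{2}\omega_Z,\,f_m^{*}B)$, preserves $\nu$-tilt-semistability, and acts on K\"unneth components by $\ch_{p,q}(f_m^{*}E)=m^{2q}\,\ch_{p,q}(E)$. Combined with the Fourier--Mukai autoequivalences $\id_Y\times\Phi$ attached to (a suitable multiple of) $\omega_Z$ --- which are compatible with tilt-stability after a transformation of $(\omega,B)$, as worked out for Abelian varieties in \cite{bms14, mp16a, mp16b}, and which interchange the low and high $Z$-slots --- these furnish a group of symmetries of the set of numerical classes of tilt-semistable objects, under which $Q\ge 0$ is equivariant. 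It therefore suffices to establish the inequality on a fundamental domain, in practice the large-volume chamber where $\omega_Z$ dominates: there a tilt-semistable object on $X$ is controlled by its restrictions to the fibers of $X\to Y$ (copies of $S$ or $C$), on which the Bogomolov inequality is classical, and pulling that inequality back and integrating over $Y$ bounds the $Z$-heavy components $\ch_{p,q}(E)$. The remaining information concerns the $Y$-direction Chern characters of the associated graded, which I would control using the Beilinson-type resolution of $D^b(\mathbb{P}^n)$ (writing $E$ through the exceptional objects $\mathcal{O}\boxtimes(-),\dots,\mathcal{O}(n)\boxtimes(-)$), the classical two-dimensional theory on products $Y'\times Z_y$ with $Y'\subset Y$ a hyperplane section, and Hirzebruch--Riemann--Roch to keep track of $\ch_3$; for $Y=\mathbb{P}^1\times\mathbb{P}^1$ one applies this to each of the two $\mathbb{P}^1$-fibrations.

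The genuine difficulty is the precise control of the third Chern character, i.e. closing the gap between these fiberwise, large-volume bounds and the full inequality $Q_{\omega,B}(E)\ge 0$. Three points will need care: (a) the restriction of $E$ to a fiber of a trivial fibration is \emph{not} tilt-semistable because the restricted polarization degenerates, so one must instead restrict to sufficiently positive divisors $D\in|a\,\omega_Y+\omega_Z|$ and control the limit $a\to\infty$; (b) the objects $f_m^{*}E$ and their Fourier--Mukai transforms may cross walls of tilt-stability as the parameters vary, so the symmetry argument has to be phrased in terms of extremal numerical classes and Harder--Narasimhan factors rather than individual objects; and (c) reassembling the finitely many inequalities produced by the $Y$-resolution and the $Z$-symmetries into the single quadratic inequality. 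It is in step (c) --- where the special geometry of $X$, namely Abelian fibers and projective-space fibers, both with fully understood derived categories and ample cones, must be used in an essential way --- that I expect the main obstacle to lie; (a) and (b), while technical, should be within reach of standard methods in the subject.
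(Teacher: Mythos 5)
Your first step---the reduction to a Bogomolov--Gieseker type inequality for tilt-(semi)stable objects via \cite{bmt14a} and \cite{bms14}---agrees with the paper, which invokes Theorem 5.4 of \cite{bms14} to reduce everything to showing $\ch_3^{\bar{\beta}(E)}(E)\le 0$ for $\bar{\beta}$-stable $E$ with $\ch_0(E)\ge 0$ and $\bar{\beta}(E)\in[0,1)$. But the verification you outline is not a proof: you yourself flag step (c), reassembling the fiberwise and large-volume bounds into the full inequality, as the place where you ``expect the main obstacle to lie,'' and that is precisely the content of the theorem. The Fourier--Mukai symmetry strategy you import from the Abelian threefold case does not transfer: on $\mathbb{P}^1\times S$ or $\mathbb{P}^2\times C$ there is no group of autoequivalences acting transitively enough on the stability parameters to reduce to a single large-volume chamber, and restriction of a tilt-semistable object to fibers of the trivial fibration (your point (a)) degenerates exactly where the $\ch_3$ information lives. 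So the proposal has a genuine gap at its core.

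The idea you are missing is the asymptotic Euler-characteristic argument with a \emph{mixed} endomorphism. The paper takes $f^{(m^2,m)}=\underline{m^2}\times\underline{m}$, the product of the toric Frobenius morphism of degree $m^2$ on the projective-space factor $Y$ and multiplication by $m$ on the Abelian factor $Z$; the exponents are chosen so that $f^{(m^2,m)*}$ scales $H^{2i}$ uniformly by $m^{2i}$. Riemann--Roch gives $\chi(\mathcal{O},f^{(m^2,m)*}E)=m^6\ch_3(E)+O(m^4)$, while $\chi\le\hom(\mathcal{O},\cdot)+\ext^2(\mathcal{O},\cdot)$ for a two-term complex. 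The two crucial inputs for bounding the right-hand side by $O(m^4)$ are Thomsen's splitting of the Frobenius pushforward of a line bundle into a direct sum of line bundles (Theorem \ref{toric frob}), which converts $\hom(\mathcal{O}(H'),f^{(m^2,m)*}E)$ into Homs from explicit line bundles whose twisted $\ch_1^{\bar{\beta}}$ is ample (resp.\ anti-ample), and the tilt-stability of line bundles (Lemma \ref{tilt-stab line bdl}), which forces those Homs to vanish and lets Proposition \ref{approx ext} apply. Neither the Frobenius splitting nor the resulting vanishing appears anywhere in your plan; the Beilinson-resolution idea on the $Y$-factor is a much weaker substitute and does not control $\ch_3$. (That the tilt-stability of line bundles is exactly why the paper cannot treat $Y\times C$ for other toric surfaces $Y$, cf.\ Remark \ref{other toric surface}, shows this input is essential rather than incidental.) Finally, the paper must treat rational and irrational $\bar{\beta}$ separately, by twisting with $\mathcal{O}(-pqH)$ after pulling back by $f^{(q^2,q)}$, resp.\ by Dirichlet approximation; your sketch does not address the non-integral case at all.
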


See Theorem \ref{main thm} for the precise statement. 
In particular, the above theorem implies: 

\begin{thm}
Let $X$ be as above. 
Then there exist Bridgeland stability conditions on $X$. 
\end{thm}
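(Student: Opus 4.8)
The plan is to derive this statement formally from the preceding theorem --- stated precisely as Theorem~\ref{main thm} --- via the construction of Bayer--Macr{\`i}--Toda \cite{bmt14a}, in the refined form worked out by Bayer--Macr{\`i}--Stellari \cite{bms14}. Fix an ample class $\omega \in \NS(X)_{\mathbb{R}}$ and an arbitrary class $B \in \NS(X)_{\mathbb{R}}$. First I would recall the double-tilt picture. Starting from the standard heart $\Coh(X) \subset D^b(X)$ and the $B$-twisted slope $\mu_{\omega, B}$, one tilts at $\mu_{\omega, B} = 0$ to obtain a heart $\Coh^B_\omega(X) \subset D^b(X)$; on it the function $\nu_{\omega, B}$, formed from $\omega\,\ch_2^B$ and $\omega^3\,\ch_0$, defines \emph{tilt stability}, a weak stability condition. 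Tilting $\Coh^B_\omega(X)$ once more at $\nu_{\omega, B} = 0$ produces a heart $\mathcal{A}^B_\omega(X) \subset D^b(X)$, and on it one puts the BMT central charge $Z_{\omega, B}$, built from $\ch_{\le 3}$ and $\omega$ as in \cite{bmt14a}. The candidate is the pair $\sigma_{\omega, B} = (\mathcal{A}^B_\omega(X), Z_{\omega, B})$.

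The structural facts --- that $\Coh^B_\omega(X)$ is a heart, that $\nu_{\omega, B}$ is a weak stability function with the Harder--Narasimhan property whose semistable objects satisfy the classical Bogomolov--Gieseker inequality, and that $\mathcal{A}^B_\omega(X)$ is again a heart --- are available unconditionally for threefolds from \cite{bmt14a} (see also \cite{bms14}) and do not use Theorem~\ref{main thm}. Two further checks do. The first is that $Z_{\omega, B}$ is a stability function on $\mathcal{A}^B_\omega(X)$, i.e.\ that it maps every nonzero object into $\{ r\exp(\sqrt{-1}\,\pi\phi) : r > 0,\ 0 < \phi \le 1 \}$. By construction of the second tilt the imaginary part is automatically non-negative, and the only objects for which one must still exclude $Z_{\omega, B}$ landing on the positive real axis are the $\nu_{\omega, B}$-semistable $E$ with $\nu_{\omega, B}(E) = 0$ together with their shifts; for these the required sign of $\mathrm{Re}\, Z_{\omega, B}(E)$ is exactly the Bogomolov--Gieseker type bound on $\ch_3^B(E)$, which holds for $X$ by Theorem~\ref{main thm}. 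The second check is the Harder--Narasimhan property for $\sigma_{\omega, B}$ and the support property; here I would invoke the general machinery of \cite{bmt14a, bms14}, whose essential input is a strengthened, quadratic-inequality form of the BMT bound --- namely that a suitable quadratic form $Q_{\omega, B}$ is non-negative on all $\nu_{\omega, B}$-semistable objects, which is what Theorem~\ref{main thm} establishes. This non-negativity is converted into the support property of $\sigma_{\omega, B}$ with respect to a suitable norm on a finite-rank lattice by the argument of \cite{bms14}, and the HN property then follows as in \cite{bmt14a}.

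Combining these, $\sigma_{\omega, B}$ is a Bridgeland stability condition on $D^b(X)$ in the sense of \cite{bri07} satisfying the support property of \cite{bms14}, which proves the theorem; letting the pair $(\omega, B)$ vary, with $\omega$ ample, one in fact obtains an open ``geometric'' region inside $\Stab(X)$. As for the main obstacle: for this theorem there is essentially none, all of the genuine difficulty having been concentrated into Theorem~\ref{main thm}. The only point requiring care is bookkeeping --- one must make sure that Theorem~\ref{main thm} is formulated in exactly the quadratic shape used as the hypothesis of the construction theorems in \cite{bmt14a, bms14} (rather than merely the weaker ``$\nu_{\omega, B} = 0$'' numerical inequality), since it is that quadratic form, and not just the numerical bound, that yields the support property; once the two are matched up, the present statement is immediate.
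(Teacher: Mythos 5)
Your proposal is correct and follows essentially the same route as the paper, which simply combines Theorem~\ref{main thm} with the quoted result of Bayer--Macr{\`i}--Toda (Theorem~\ref{stab condi}) stating that Conjecture~\ref{bg conj} implies $(Z_{\alpha,\beta}, \mathcal{A}_{\alpha,\beta})$ is a stability condition. Your additional care about matching the inequality to the quadratic form needed for the support property is a sensible unpacking of what the cited theorem packages, but the paper treats all of this as a black box and the deduction is immediate.
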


%===========================================================

\subsection{Strategy of the proof of the main theorem}
The idea of proof is borrowed from that of 
\cite{bms14} and \cite{bmsz16}. 
Roughly speaking, they considered 
the Euler characteristic 
$\chi(\mathcal{O}, \underline{m}^{*}E)$ 
of the pull back of a given tilt-semistable object $E$
by the multiplication map 
(resp. toric Frobenius morphism) 
$\underline{m} \colon X \to X$
on an Abelian threefold (resp. a toric threefold) $X$. 
Then by the Riemann-Roch theorem, we know that 
$\chi(\mathcal{O}, \underline{m}^{*}E)$ 
is a polynomial of degree $6$ (resp. $3$) with respect to $m$ and 
its leading coefficient is $\ch_{3}(E)$. 

On the other hand, they showed that 
$\ext^{i}(\mathcal{O}, \underline{m}^{*}E)=O(m^4)$ 
(resp. $O(m^2)$) for even $i$. 
In this way, they got an inequality for the third part of the Chern character, i,e, 
$\ch_{3}(E) \leq 0$. 
To approximate $\ext^{i}(\mathcal{O}, \underline{m}^{*}E)$, 
it was important that $\underline{m}$ is 
\'{e}tale in the case when 
$X$ is an Abelian threefold, while the toric Frobenius splitting 
(Theorem \ref{toric frob}) was essential when $X$ is a toric threefold. 

In this paper, we consider the product of the multiplication map 
on an Abelian variety and the toric Frobenius morphisms on 
the projective spaces. 
Then we approximate $\ext^{i}(\mathcal{O}, \underline{m}^{*}E)$ 
combining the methods in \cite{bms14} and \cite{bmsz16}. 
Note that our approach cannot apply to 
the product threefolds  of an elliptic curve and other toric surfaces 
for a technical reason (see Remark \ref{other toric surface}).

%=========================================================-

\subsection{Plan of the paper}
The paper is organized as follows. 
In Section \ref{prel}, we recall the notion of 
stability conditions. After that, we recall the 
work of \cite{bmt14a} and state our main theorem. 
In Section \ref{prep}, we collect key results 
which we will use in the proof of our main theorem. 
In Section \ref{pf of main thm}, we prove our main theorem. 
In Appendix A, we will show that 
the original BG inequality conjecture for 
a Calabi-Yau threefold containing a plane 
does not hold. 

%=====================================================================

\begin{ACK}
I would like to thank my supervisor Professor Yukinobu Toda. 
He suggested this problem to me and gave various 
comments and advices. 
I would also like to thank Genki Ouchi for useful discussions. 
\end{ACK}

%==============================================================-

\begin{NaC}
In this paper we always work over $\mathbb{C}$. 
We use the following notations: 
\begin{itemize}
\item $\ch^{B}=(\ch_{0}^{B}, \cdots, \ch_{n}^{B}):=e^{-B}.\ch$, 
where $\ch$ denotes the Chern character 
and $B \in H^{2}(X; \mathbb{R})$. 
\item $\ch^{\beta}:=\ch^{\beta H}$, 
where $H$ is an ample divisor and 
$\beta \in \mathbb{R}$. 
\item $H.\ch^{B}:=(H^n.\ch^{B}_{0}, \cdots, H.\ch^{B}_{n-1}, \ch^{B}_{n})$. 
\item $K(\mathcal{A})$ : 
the Grothendieck group of an abelian category $\mathcal{A}$. 
\item $\hom(E, F):=\dim\Hom(E, F)$. 
\item $\ext^i(E, F):=\dim\Ext^i(E, F)$. 
\item $D^b(X):=D^b(\Coh(X))$ : 
the bounded derived category of coherent sheaves 
on a smooth projective variety $X$. 
\end{itemize}
\end{NaC}

%=====================================================-
%=======================================================

\section{Preliminaries}
\label{prel}

\subsection{Bridgeland stability condition}
In this subsection, we recall the definition of 
stability conditions due to Bridgeland \cite{bri07}. 
First we define the notion of stability functions: 

\begin{defin}
\begin{enumerate}
\item Let $\mathcal{A}$ be an abelian category. 
A {\it stability function} on $\mathcal{A}$ is 
a group homomorphism 
$Z \colon K(\mathcal{A}) \to \mathbb{C}$ 
such that 
\[
Z(\mathcal{A} \setminus \{0\}) \subset \mathcal{H}. 
\]
Here $\mathcal{H}:=\mathbb{H} \cup \mathbb{R}_{<0}$ 
is the union of upper half plane and negative real line. 
\item Let $Z$ be a stability function on an abelian category 
$\mathcal{A}$. For $E \in \mathcal{A}$, define 
$\phi_{Z}(E):=\frac{1}{\pi}\arg Z(E) \in (0, 1]$. 
Then $E$ is $Z${\it -semistable} (resp. {\it stable}) if 
for every proper non-zero subobject $F \subset E$, 
\[
\phi_{Z}(F) \leq (\mbox{resp.}<) \phi_{Z}(E). 
\]

\item $Z$ satisfies the 
{\it Harder-Narashimhan property} 
({\it HN property}) if 
the following property holds: 
For every non-zero object $E \in \mathcal{A}$, 
there exists a finite filtration 
\[
0=E_{0} \subset E_{1} \subset \cdots \subset E_{n-1} \subset E_{n}=E
\]
such that $F_{i}:= E_{i}/E_{i-1}$ is $Z$-semistable 
for every $i=1, \cdots, n$ and 
\[
\phi_{Z}(E_{1}) > \cdots >\phi_{Z}(E_{n}). 
\]
\end{enumerate}
\end{defin}

Now we can define the notion of stability conditions 
on a triangulated category. 

\begin{defin}
Let $\mathcal{D}$ be a triangulated category. 
A {\it stability condition} $\sigma=(Z, \mathcal{A})$ 
on $\mathcal{D}$ is a pair consisting of 
the heart of a bounded t-structure $\mathcal{A}$ on $\mathcal{D}$
and a stability function $Z$ on $\mathcal{A}$ 
(called central charge) satisfying the HN-property. 
\end{defin}

\subsection{Stability conditions on smooth projective varieties.}
In this subsection, we recall the works about 
the stability conditions on smooth projective varieties. 
Let $X$ be a smooth projective variety, 
$\omega$ an ample $\mathbb{R}$-divisor on $X$, 
and $B$ any $\mathbb{R}$-divisor on $X$. 
Conjecturally, a group homomorphism 
\[
Z_{\omega, B}:=-\int_{X}e^{-i\omega}.\ch^B 
\colon K(X) \to \mathbb{C} 
\]
becomes the central charge of some stability condition on $D^b(X)$ 
(cf. \cite{bmt14a}, Conjecture 2.1.2). 

When $\dim X=1$, the pair $(Z_{\omega, B}, \Coh(X))$ 
is a stability condition on $X$ and 
this coincides with the Mumford's slope stability. 

However in $\dim X \geq 2$, we need a more 
complicated construction of the wanted heart as follows. 
Let us define the slope function on $\Coh(X)$ as 
\[
\mu_{\omega, B} \colon \Coh(X) \to (-\infty, +\infty], \ 
E \mapsto \frac{\omega^{n-1}.\ch_{1}^{B}(E)}{\ch_{0}^B(E)}, 
\]
where $n=\dim X$. 
Define subcategories of $\Coh(X)$ as follows: 
\begin{align*}
&\mathcal{T}_{\omega, B}
:= \left\langle T \in \Coh(X): 
T \mbox{ is }\mu_{\omega, B} 
\mbox{-semistable with } 
\mu_{\omega, B}(T)>0 
\right\rangle, \\ 
&\mathcal{F}_{\omega, B}
:=\left\langle F \in \Coh(X): 
F \mbox{ is } \mu_{\omega, B} 
\mbox{-semistable  with }
\mu_{\omega, B}(F) \leq 0 
\right\rangle. 
\end{align*}

Here, we denote by $\langle S \rangle$ 
the extension closure of a set of objects 
$S \subset \Coh(X)$. 
Due to the HN-property of 
$\mu$-stability, the pair 
$(\mathcal{T}_{\omega, B}, \mathcal{F}_{\omega, B})$ 
is a torsion pair on $\Coh(X)$ 
in the sence of \cite{hrs96}. 
Then we can construct a new heart, 
called the tilting heart of $\Coh(X)$ 
with respect to the torsion pair: 
\[ 
\Coh^{\omega, B}(X)
:=
\left\langle
\mathcal{F}_{\omega, B}[1], 
\mathcal{T}_{\omega, B}
\right\rangle 
=\left\{E \in \Coh(X) : 
\begin{array}{ll}
\mathcal{H}^{-1}(E) \in \mathcal{F}_{\omega, B}, 
\mathcal{H}^{0}(E) \in \mathcal{T}_{\omega, B}, \\
\mathcal{H}^{i}(E)=0 \mbox{ for } i \neq -1, 0 
\end{array}
\right\}. 
\] 
In $\dim X=2$, $\Coh^{\omega, B}(X)$ is the required heart: 

\begin{thm}[(\cite{bri08}, \cite{ab13})]
Let $\dim X=2$. Then the pair 
$(Z_{\omega, B}, \Coh^{\omega, B}(X))$ 
is a stability condition on $X$. 
\end{thm}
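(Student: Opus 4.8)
The plan is to verify, one at a time, the three conditions in the definition of a stability condition for the pair $(Z_{\omega,B},\Coh^{\omega,B}(X))$ on $D^b(X)$, with $n=\dim X = 2$. That $\Coh^{\omega,B}(X)$ is the heart of a bounded t-structure is formal: by \cite{hrs96} the tilting construction produces such a heart from any torsion pair, and $(\mathcal{T}_{\omega,B},\mathcal{F}_{\omega,B})$ is a torsion pair thanks to the existence of Harder--Narasimhan filtrations for Mumford slope stability on the surface $X$. So the real content lies in showing that $Z_{\omega,B}$ is a stability function on this heart and that it has the HN-property.

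For the stability-function property, I would first record the explicit formulas, valid when $\dim X = 2$: $\Im Z_{\omega,B}(E) = \omega\cdot\ch_1^B(E)$ and $\Re Z_{\omega,B}(E) = \tfrac12\,\omega^2\ch_0^B(E) - \ch_2^B(E)$. Given $0\neq E\in\Coh^{\omega,B}(X)$, set $F:=\mathcal{H}^{-1}(E)\in\mathcal{F}_{\omega,B}$ and $T:=\mathcal{H}^0(E)\in\mathcal{T}_{\omega,B}$, so that $Z_{\omega,B}(E)=Z_{\omega,B}(T)-Z_{\omega,B}(F)$. The slope inequalities defining $\mathcal{T}_{\omega,B}$ and $\mathcal{F}_{\omega,B}$ give $\omega\cdot\ch_1^B(T)\geq 0$ and $\omega\cdot\ch_1^B(F)\leq 0$, hence $\Im Z_{\omega,B}(E)\geq 0$. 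The one delicate point is the boundary case $\Im Z_{\omega,B}(E)=0$, where both of these vanish: then the composition factors of $T$ are all zero-dimensional torsion sheaves, so $Z_{\omega,B}(T)$ is a non-positive real number, while the HN factors of $F$ are $\mu_{\omega,B}$-semistable torsion-free sheaves of slope $0$; here I would invoke the classical Bogomolov--Gieseker inequality $\Delta\geq 0$ for $\mu$-semistable torsion-free sheaves on a surface together with the Hodge index theorem (a class orthogonal to the ample $\omega$ has non-positive self-intersection) to deduce that $\ch_2^B$ of each such factor is $\leq 0$. This forces $\Re Z_{\omega,B}(E)<0$ when $E\neq 0$, so $Z_{\omega,B}(E)\in\mathcal{H}$ in all cases.

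The HN-property is the main obstacle, and I would approach it by the Noetherianity-plus-discreteness method of \cite{bri07}. The key point to establish is that the tilting heart $\Coh^{\omega,B}(X)$ is a Noetherian abelian category; this is not automatic for tilted hearts and is exactly where the surface hypothesis is genuinely used, via boundedness of the family of $\mu_{\omega,B}$-semistable sheaves with prescribed Chern character, which controls sub- and quotient objects inside the heart. Granting this, one reduces first to the case $\omega,B\in\NS(X)_{\mathbb{Q}}$, where $\Im Z_{\omega,B}$ and $\Re Z_{\omega,B}$ take values in a fixed group $\tfrac1N\mathbb{Z}$, so that the image of $Z_{\omega,B}$ is discrete in $\mathbb{C}$; discreteness then rules out infinite chains of sub- or quotient objects with strictly monotone phase, and the HN-property follows. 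For arbitrary, possibly irrational, $\omega,B$ one upgrades by a limiting/deformation argument, or equivalently verifies local finiteness of the induced slicing directly. Combining these three steps yields the theorem.
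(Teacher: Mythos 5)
The paper does not prove this theorem; it is quoted as a known result from \cite{bri08} and \cite{ab13}, so there is no internal proof to compare against. Your outline reproduces the standard argument of those references: HRS tilting for the heart, positivity of $\Im Z_{\omega, B}$ from the torsion pair together with the classical Bogomolov--Gieseker inequality and the Hodge index theorem to handle the degenerate case $\omega\cdot\ch_1^B(E)=0$, and Noetherianity plus discreteness of the image of $Z_{\omega,B}$ (for rational classes, then a deformation to irrational $\omega, B$) for the HN property. The only caveat is that the two genuinely hard steps --- Noetherianity of the tilted heart and the passage from rational to arbitrary $\omega, B$ --- are asserted rather than carried out, but you correctly identify them as the crux, and they are precisely where the cited references do the real work.
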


In $\dim X=3$, Bayer, Macr{\`i} and Toda provided the conjectural approach 
to construct the required heart (\cite{bmt14a}). 
The idea is to tilt the heart $\Coh^{\omega, B}(X)$ 
once again by using a new slope function. 
Let us recall the work \cite{bmt14a} of 
Bayer, Macr{\`i} and Toda. 
In the followings, assume that $\dim X=3$. 
Let $H$ be an ample divisor on $X$ and 
let $\omega:=\alpha\sqrt{3}H, B:=\beta H$ \ 
$(\alpha, \beta \in \mathbb{R}, \alpha>0)$. 
Define a slope function on 
$\Coh^{\beta}(X):=\Coh^{\omega, B}(X)$
as follows: 
\[
\nu_{\alpha, \beta}=\nu_{\omega, B} 
\colon \Coh^{\beta}(X) \to (-\infty, +\infty], \ 
E \mapsto 
\frac{H.\ch_{2}^{\beta}(E)-\frac{1}{2}\alpha^2H^3.\ch_{0}^{\beta}(E)}
{H^2.\ch_{1}^{\beta}(E)}. 
\]

Then we can define the notion of 
$\nu_{\alpha, \beta}$-{\it stability} (or {\it tilt-stability}) 
as similar to the $\mu_{\omega, B}$-stability 
on $\Coh(X)$. 
Using the tilt-stability, 
the torsion pair 
$(
\mathcal{T}^{'}_{\alpha, \beta}, 
\mathcal{F}^{'}_{\alpha, \beta} 
)$ 
on $\Coh^{\beta}(X)$ is 
also defined 
similarly to 
$(
\mathcal{T}_{\omega, B}, 
\mathcal{F}_{\omega, B} 
)$ 
on $\Coh(X)$. 
Bayer, Macr{\`i} and Toda 
 conjectured the following BG type inequality 
for $\nu_{\alpha, \beta}$-semistable objects: 

\begin{conj}[(\cite{bmt14a})]
\label{bg conj}
Let $E \in \Coh^{\beta}(X)$ be a 
$\nu_{\alpha, \beta}$-semistable object 
with $\nu_{\alpha, \beta}(E)=0$. Then we have 
\[
\ch_{3}^{\beta}(E) \leq \frac{1}{6}\alpha^2H^2.\ch_{1}^{\beta}(E). 
\]
\end{conj}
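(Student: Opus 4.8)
The plan is to follow the strategy outlined in the introduction, which adapts the techniques of \cite{bms14} (the \'etale multiplication map on an Abelian threefold) and \cite{bmsz16} (the toric Frobenius on a toric threefold). Let $X = A \times \mathbb{P}$ be one of our three threefolds, where $A$ is an Abelian variety of dimension $1$ or $2$ and $\mathbb{P}$ is a product of projective spaces of complementary dimension. For a positive integer $m$ I would consider the morphism $\underline{m} = m_A \times F_m \colon X \to X$, where $m_A$ is multiplication by $m$ on $A$ and $F_m$ is the degree-$m$ toric Frobenius on $\mathbb{P}$ (suitably normalized so that $\underline{m}^*$ scales $\ch_i$ in a controlled homogeneous way). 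The first step is to record the behaviour of $\underline{m}^*$ on Chern characters and on tilt-stability: pulling back along $\underline{m}$ should preserve (or rescale) the numerical quantities defining $\nu_{\alpha,\beta}$, so that if $E$ is $\nu_{\alpha,\beta}$-semistable with $\nu_{\alpha,\beta}(E)=0$ then $\underline{m}^*E$ (up to a twist by a line bundle depending on $m$) is again tilt-semistable with vanishing slope, for a rescaled pair $(\alpha, \beta)$.

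The second step is the Riemann--Roch computation. By Hirzebruch--Riemann--Roch, $\chi(\mathcal{O}_X, \underline{m}^*E)$ is a polynomial in $m$ whose degree equals $\dim X \cdot (\text{largest Frobenius-type scaling exponent})$; combining the $m^6$-scaling in the Abelian directions with the linear scaling in the projective directions, one extracts that the top-degree coefficient is a positive multiple of $\ch_3^\beta(E)$ (after the appropriate twist), while the relevant lower-order terms produce exactly the quantity $\tfrac{1}{6}\alpha^2 H^2.\ch_1^\beta(E)$. The third and crucial step is the cohomology vanishing / approximation: one must show that $\ext^i(\mathcal{O}_X, \underline{m}^*E) = O(m^{k})$ for \emph{even} $i$, where $k$ is strictly smaller than the degree of the Euler-characteristic polynomial. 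Since $\chi = \sum (-1)^i \ext^i$, the odd-$i$ terms then force the leading term of $\chi$ to be non-positive, giving $\ch_3^\beta(E) \leq \tfrac{1}{6}\alpha^2 H^2.\ch_1^\beta(E)$. For this one uses that $m_A$ is \'etale (so pushforward along it is exact and preserves the bounded tilt-heart, as in \cite{bms14}) together with the toric Frobenius splitting (Theorem \ref{toric frob}) on the $\mathbb{P}$-factor (as in \cite{bmsz16}); the product morphism inherits a splitting that lets one bound $\Ext^i(\mathcal{O}_X, \underline{m}^*E) = \Ext^i(\underline{m}_*\mathcal{O}_X, E)$ in terms of $\Ext$-groups on $X$ of $E$ against a bounded family of sheaves, each contributing at most $O(m^{k})$.

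I expect the main obstacle to be precisely this third step: controlling $\ext^i(\mathcal{O}, \underline{m}^*E)$ on the \emph{product}. The Frobenius-splitting argument of \cite{bmsz16} is local-to-global on a toric threefold, whereas here only the $\mathbb{P}$-factor is toric; one needs to check that $\underline{m}_*\mathcal{O}_X$ decomposes as $\bigoplus (m_A)_*\mathcal{O}_A \boxtimes (\text{line bundles on } \mathbb{P})$ with the line-bundle summands lying in a range whose Ext-pairing with the tilt-heart components of $E$ is bounded of the right order in $m$, and simultaneously that the \'etale pushforward from $A$ does not spoil the estimate. This is also where the restriction to $A$ of dimension $\le 2$ and to $\mathbb{P}^1\times\mathbb{P}^1$, $\mathbb{P}^2$ rather than arbitrary toric surfaces enters (cf. Remark \ref{other toric surface}): the bookkeeping of scaling exponents $6 = 2\cdot 3$ on $A^3$ versus $3$ on a toric threefold only balances when the toric factor has the right dimension and its Frobenius splits with the needed control on the twisting line bundles. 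Once these estimates are in place, assembling them into the stated inequality is a routine comparison of polynomial degrees and leading coefficients.
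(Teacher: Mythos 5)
Your outline reproduces the strategy announced in the introduction, but at the two points where the actual work happens it either asserts something false or leaves the essential mechanism unexplained. First, the claim that the lower-order terms of the Riemann--Roch polynomial ``produce exactly the quantity $\tfrac{1}{6}\alpha^2 H^2.\ch_1^{\beta}(E)$'' is not correct and is not how the $\alpha$-dependence of the inequality is obtained. The Euler-characteristic asymptotics only ever see the leading coefficient $\ch_3$ of the (appropriately twisted) pullback; everything of lower order is discarded as $O(m^4)$, so no constant like $\tfrac16\alpha^2 H^2.\ch_1^{\beta}(E)$ can be read off from them. The paper instead first invokes the reduction theorem of \cite{bms14} (Theorem \ref{reduction thm}): it suffices to prove $\ch_3^{\bar\beta(E)}(E)\le 0$ for every $\bar\beta$-stable $E$ with $\ch_0(E)\ge 0$ and $\bar\beta(E)\in[0,1)$; the passage from this limiting statement back to the full inequality for all $(\alpha,\beta)$ is a separate deformation argument that your proposal omits entirely. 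Relatedly, you do not address that $\bar\beta$ may be irrational: the twist by $\mathcal{O}(-pqH)$ needed to turn $\ch^{\bar\beta}$ into an honest Chern character only exists for $\bar\beta=p/q$ rational, and the irrational case requires a separate argument (Dirichlet approximation by $\beta_n=p_n/q_n$ together with ampleness estimates uniform in $n$). This case split occupies most of Section \ref{pf of main thm} and cannot be waved away.

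Second, the cohomology estimate. You correctly identify that $\hom(\mathcal{O},\cdot)$ and $\ext^2(\mathcal{O},\cdot)$ must be bounded by $O(m^4)$, but the mechanism is not ``pairing $E$ against a bounded family of line-bundle summands'': unwinding the Frobenius splitting and adjunction produces line bundles whose twists grow with $m$. The actual input is Proposition \ref{approx ext}, which reduces the $O(m^4)$ bound to a single \emph{vanishing} $\hom(\mathcal{O}(H'), f^{(m^2,m)*}E)=0$ (resp. $\ext^2(\mathcal{O}(-H'),\cdot)=0$) for one ample $H'$; that vanishing is then proved by combining the toric Frobenius splitting (Theorem \ref{toric frob}), the adjunction of Lemma \ref{adjoint}, preservation of tilt-stability under the \'etale factor (Proposition \ref{etale pull back}), and crucially the tilt-stability of line bundles (Lemma \ref{tilt-stab line bdl}), which feeds the slope comparison of Lemma \ref{ext vanish}. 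This last ingredient is where the restriction to $\mathbb{P}^1$, $\mathbb{P}^2$, $\mathbb{P}^1\times\mathbb{P}^1$ and Abelian factors enters: Lemma \ref{tilt-stab line bdl} needs $H.D^2\ge 0$ for every effective divisor $D$, which fails for toric surfaces carrying a negative curve (Remark \ref{other toric surface}). Your explanation of the restriction in terms of ``bookkeeping of scaling exponents'' is therefore a misdiagnosis. Finally, a smaller but real point: the scaling must be taken asymmetrically, $f^{(m^2,m)}=\underline{m^2}\times\underline{m}$, because multiplication by $m$ on the Abelian factor acts on $H^{2i}$ by $m^{2i}$ while the degree-$a$ toric Frobenius acts by $a^i$; your ``suitably normalized'' gestures at this, but the specific choice $(m^2,m)$ is what makes $\ch$ scale homogeneously and the leading term come out as $m^6\ch_3$.
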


Moreover, they showed that the above 
inequality implies the existence of 
a stability condition with the central 
charge $Z_{\alpha, \beta}:=Z_{\omega, B}$. 
Let $\mathcal{A}_{\alpha, \beta}$ be a 
tilting heart of $\Coh^{\beta}(X)$ 
with respect to $\nu_{\alpha, \beta}$-stability, i,e, 
\[
\mathcal{A}_{\alpha, \beta}:=
\left\langle
\mathcal{F}^{'}_{\alpha, \beta}[1], 
\mathcal{T}^{'}_{\alpha, \beta}
\right\rangle. 
\] 

\begin{thm}[(\cite{bmt14a})]
\label{stab condi}
Assume that Conjecture \ref{bg conj} holds. 
Then the pair $(Z_{\alpha, \beta}, \mathcal{A}_{\alpha, \beta})$ 
is a stability condition on $X$. 
\end{thm}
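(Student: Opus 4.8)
The plan is to verify the three defining properties of a stability condition for the pair $(Z_{\alpha,\beta}, \mathcal{A}_{\alpha,\beta})$ --- that $\mathcal{A}_{\alpha,\beta}$ is the heart of a bounded t-structure on $D^b(X)$, that $Z_{\alpha,\beta}$ restricts to a stability function on it, and that the HN-property holds --- with Conjecture \ref{bg conj} entering only in the second step. For the first: by construction $\mathcal{A}_{\alpha,\beta} = \langle \mathcal{F}'_{\alpha,\beta}[1], \mathcal{T}'_{\alpha,\beta}\rangle$ is the tilt of $\Coh^\beta(X)$ at the torsion pair generated by the $\nu_{\alpha,\beta}$-semistable objects of positive (resp. non-positive) tilt-slope, so by the Happel--Reiten--Smal{\o} theorem it is the heart of a bounded t-structure as soon as $\nu_{\alpha,\beta}$-stability admits Harder--Narasimhan filtrations in $\Coh^\beta(X)$; the latter is proved exactly as for surfaces, using the classical Bogomolov--Gieseker inequality for $\nu_{\alpha,\beta}$-semistable objects (which needs no conjecture) together with Noetherianity of $\Coh^\beta(X)$.

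\emph{The stability function.} Expanding $Z_{\alpha,\beta} = -\int_X e^{-i\alpha\sqrt{3} H}.\ch^{\beta H}$ by degree gives
\[
\Re Z_{\alpha,\beta}(E) = -\ch_3^\beta(E) + \tfrac{3}{2}\,\alpha^2 H^2.\ch_1^\beta(E), \qquad \Im Z_{\alpha,\beta}(E) = \alpha\sqrt{3}\,\bigl(H.\ch_2^\beta(E) - \tfrac{1}{2}\,\alpha^2 H^3.\ch_0^\beta(E)\bigr),
\]
so $\Im Z_{\alpha,\beta}$ is $\alpha\sqrt{3}$ times the numerator of $\nu_{\alpha,\beta}$. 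Using that $H^2.\ch_1^\beta \geq 0$ on $\Coh^\beta(X)$ --- together with the classical Bogomolov inequality for tilt-stability to handle objects with $H^2.\ch_1^\beta = 0$ --- the definition of the torsion pair gives $\Im Z_{\alpha,\beta}(E) \geq 0$ for every $0 \neq E \in \mathcal{A}_{\alpha,\beta}$; when it is strictly positive, $Z_{\alpha,\beta}(E)$ lies in the open upper half plane and there is nothing more to check. When $\Im Z_{\alpha,\beta}(E) = 0$, I would apply this inequality to the two terms of the triangle $F[1] \to E \to T \to F[2]$ with $T \in \mathcal{T}'_{\alpha,\beta}$, $F \in \mathcal{F}'_{\alpha,\beta}$, concluding that $\Im Z_{\alpha,\beta}(T) = \Im Z_{\alpha,\beta}(F) = 0$; passing to Harder--Narasimhan factors in $\Coh^\beta(X)$ then exhibits $E$ as an iterated extension of zero-dimensional sheaves $Q$ (in cohomological degree $0$) and of shifts $G[1]$ of $\nu_{\alpha,\beta}$-semistable objects $G$ with $\nu_{\alpha,\beta}(G) = 0$, each such $G$ necessarily satisfying $H^2.\ch_1^\beta(G) > 0$. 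Now $\Re Z_{\alpha,\beta}(Q) = -\ch_3(Q) < 0$, while Conjecture \ref{bg conj} applied to $G$ yields
\[
\Re Z_{\alpha,\beta}(G[1]) = \ch_3^\beta(G) - \tfrac{3}{2}\,\alpha^2 H^2.\ch_1^\beta(G) \leq -\tfrac{4}{3}\,\alpha^2 H^2.\ch_1^\beta(G) < 0.
\]
Summing over the factors, $\Re Z_{\alpha,\beta}(E) < 0$ whenever $E \neq 0$, so $Z_{\alpha,\beta}$ maps $\mathcal{A}_{\alpha,\beta} \setminus \{0\}$ into $\mathcal{H}$, as required of a stability function.

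\emph{The HN-property.} This follows from Noetherianity of $\mathcal{A}_{\alpha,\beta}$ together with standard arguments (Bridgeland's criterion, after controlling the image of $Z_{\alpha,\beta}$: for rational $\alpha^2$ and $\beta$ the image of $\Im Z_{\alpha,\beta}$ lies in a discrete subgroup of $\mathbb{R}$, and the general case reduces to that one). The genuinely hard point --- and the main obstacle of the whole argument, since Conjecture \ref{bg conj} is assumed --- is the Noetherianity of the twice-tilted heart $\mathcal{A}_{\alpha,\beta}$: this does not follow formally from Noetherianity of $\Coh^\beta(X)$ and must be established by analysing the behaviour of the invariants $\ch_0^\beta$, $H^2.\ch_1^\beta$ and $H.\ch_2^\beta$ along ascending chains, once again using the classical Bogomolov inequality for tilt-stability. (Alternatively, one can deduce the support property for $Z_{\alpha,\beta}$ directly from Conjecture \ref{bg conj}; this is stronger than what the notion of stability condition used here requires, but it is what makes the set of these stability conditions a complex manifold.)
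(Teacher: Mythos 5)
The paper does not prove this theorem at all; it is quoted directly from \cite{bmt14a}, so there is no in-paper argument to compare against. Your sketch is a correct reconstruction of the Bayer--Macr\`i--Toda proof --- the expansion of $Z_{\alpha,\beta}$, the reduction of the positivity of the central charge to objects with $\Im Z_{\alpha,\beta}=0$, their decomposition into zero-dimensional sheaves and shifts $G[1]$ with $\nu_{\alpha,\beta}(G)=0$, the use of Conjecture \ref{bg conj} exactly and only at that point, and the Noetherianity/discreteness route to the HN property --- so nothing further is needed.
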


Hence the construction problem of 
stability conditions on $X$ is reduced to 
Conjecture \ref{bg conj}. 
The main theorem of this paper is the following. 

\begin{thm}
\label{main thm}
Let $X$ be $\mathbb{P}^1 \times S, 
\mathbb{P}^2 \times C$, or 
$\mathbb{P}^1 \times \mathbb{P}^1 \times C$, 
where $S$ is an Abelian surface and 
$C$ is an elliptic curve. 
Then for every ample divisor $H$ on $X$, 
$\alpha >0$, and $\beta \in \mathbb{R}$, 
Conjecture \ref{bg conj} holds. 
\end{thm}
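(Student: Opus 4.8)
The plan is to combine the ``multiplication-map'' argument of \cite{bms14}, used on the Abelian factor of $X$, with the ``toric Frobenius'' argument of \cite{bmsz16}, used on the projective-space factor. Write $X=A\times\mathbb{P}$, where $A$ is the Abelian variety ($\dim A\in\{1,2\}$) and $\mathbb{P}$ is the product of projective spaces ($\dim\mathbb{P}=3-\dim A$). By the standard reductions of \cite{bmt14a} and \cite{bms14} --- using the locally finite wall-and-chamber structure of tilt-stability, openness of tilt-semistability, and tensoring with line bundles pulled back from the two factors --- it suffices to prove the inequality of Conjecture \ref{bg conj} for a $\nu_{\alpha,\beta}$-semistable $E\in\Coh^{\beta}(X)$ with $\nu_{\alpha,\beta}(E)=0$, where moreover $\beta\in\mathbb{Q}$ is normalized so that $\mathcal{O}_X\in\Coh^{\beta}(X)$ and $\ch_0^{\beta}(E)\ge 0$.

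Fix $m\gg 0$ and set $\underline{m}:=\mu_m\times F_m\colon X\to X$, where $\mu_m$ is multiplication by $m$ on $A$ (étale of degree $m^{2\dim A}$) and $F_m$ is the $m$-th toric Frobenius on $\mathbb{P}$ (finite flat of degree $m^{\dim\mathbb{P}}$ and Frobenius split, Theorem \ref{toric frob}). Since $\mu_{m*}\mathcal{O}_A=\bigoplus_{k}L_{k}$ is the direct sum of the $m^{2\dim A}$ torsion line bundles on $A$ and $F_{m*}\mathcal{O}_{\mathbb{P}}=\bigoplus_{j}\mathcal{O}_{\mathbb{P}}(-d_j)$ with the $d_j$ bounded independently of $m$, the projection formula gives
\[
\mathrm{RHom}_X(\mathcal{O}_X,\underline{m}^{*}E)\;\cong\;\bigoplus_{k,j}\mathrm{RHom}_X\bigl(L_{k}^{\vee}\boxtimes\mathcal{O}_{\mathbb{P}}(d_j),\,E\bigr).
\]
It is essential to work with this decomposition rather than with $\underline{m}^{*}E$ itself: since $F_m$ is ramified, $\underline{m}^{*}E$ need not be tilt-semistable, whereas the right-hand side keeps everything on $X$ with the \emph{original} polarization $H$. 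Numerically, since $\mathrm{td}(X)$ is pulled back from $\mathbb{P}$ and $\underline{m}^{*}$ multiplies the $H^{2a}(A)\otimes H^{2b}(\mathbb{P})$-component of a class by $m^{2a+b}$, Hirzebruch--Riemann--Roch shows $\chi(\mathcal{O}_X,\underline{m}^{*}E)$ to be a polynomial in $m$ of degree $\dim A+3$ with leading coefficient $\int_X\ch_3(E)$; keeping track of this non-homogeneous scaling (the power $m^{2a}$ on the Abelian directions against $m^{b}$ on the projective ones) is the bookkeeping burden throughout.

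For the estimate, first note that each summand $L_{k}^{\vee}\boxtimes\mathcal{O}_{\mathbb{P}}(d_j)$ is a line bundle lying in $\Coh^{\beta}(X)$ (as $\beta<0$ and $d_j\ge 0$), so an easy membership argument gives $\ext^{i}(\mathcal{O}_X,\underline{m}^{*}E)=0$ for $i\notin\{0,1,2,3\}$, whence
\[
\chi(\mathcal{O}_X,\underline{m}^{*}E)=\sum_{i}(-1)^{i}\ext^{i}(\mathcal{O}_X,\underline{m}^{*}E)\;\le\;\ext^{0}(\mathcal{O}_X,\underline{m}^{*}E)+\ext^{2}(\mathcal{O}_X,\underline{m}^{*}E).
\]
To bound the right-hand side, reassemble the sum over $k$ for fixed $d_j$: since $\bigoplus_{k}L_{k}^{\vee}\boxtimes\mathcal{O}_{\mathbb{P}}(d_j)\cong(\mu_m\times\id_{\mathbb{P}})_{*}\bigl(\mathcal{O}_A\boxtimes\mathcal{O}_{\mathbb{P}}(d_j)\bigr)$ (as $\mu_{m*}\mathcal{O}_A$ is self-dual) and $\mu_m\times\id_{\mathbb{P}}$ is étale, adjunction gives
\[
\bigoplus_{k}\mathrm{RHom}_X\bigl(L_{k}^{\vee}\boxtimes\mathcal{O}_{\mathbb{P}}(d_j),E\bigr)\;\cong\;\mathrm{RHom}_X\bigl(\mathcal{O}_A\boxtimes\mathcal{O}_{\mathbb{P}}(d_j),\,(\mu_m\times\id_{\mathbb{P}})^{*}E\bigr),
\]
and now $(\mu_m\times\id_{\mathbb{P}})^{*}E$ \emph{is} tilt-semistable, with respect to the pulled-back polarization $(\mu_m\times\id_{\mathbb{P}})^{*}H$. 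Combining Serre duality on $X$ with the tilt-Bogomolov--Gieseker inequality of \cite{bmt14a} applied to $(\mu_m\times\id_{\mathbb{P}})^{*}E$ bounds $\ext^{0}$ and $\ext^{2}$ of each such pair by the lower-order Chern data of $(\mu_m\times\id_{\mathbb{P}})^{*}E$; summing over the $d_j$ with their multiplicities $n_{d_j}(m)$ (polynomials in $m$ of degree $\dim\mathbb{P}$) and using $\nu_{\alpha,\beta}(E)=0$ to pin down the top coefficient, one gets
\[
\ext^{0}(\mathcal{O}_X,\underline{m}^{*}E)+\ext^{2}(\mathcal{O}_X,\underline{m}^{*}E)\;\le\;B\,m^{\dim A+3}+O\bigl(m^{\dim A+2}\bigr),
\]
where $B$ is exactly the constant for which $\int_X\ch_3(E)\le B$ is equivalent to the inequality of Conjecture \ref{bg conj} for $E$ (it differs from $\tfrac{1}{6}\alpha^{2}H^{2}.\ch_1^{\beta}(E)$ only by the $\beta$-twist correction $\int_X\bigl(\ch_3(E)-\ch_3^{\beta}(E)\bigr)$). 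Comparing the coefficients of $m^{\dim A+3}$ with those from Hirzebruch--Riemann--Roch then yields Conjecture \ref{bg conj}.

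The hard part will be the last displayed inequality: the even-Ext bound has to be \emph{sharp} at leading order in $m$ --- its top coefficient must equal $B$ exactly, not merely have the right growth rate, since the BG inequality is itself sharp and any slack would over-prove. This forces one to marry two genuinely different mechanisms: the étale estimate on $A$, where $\mu_m^{*}$ scales $\ch_k$ of the Abelian directions by $m^{2k}$ while leaving the rank unchanged, so that tilt-semistability really costs a power of $m$; and the Frobenius-split estimate on $\mathbb{P}$, where the gain instead comes from the boundedness of the $d_j$ --- all while keeping control of the mixed, non-homogeneously scaled Chern classes. It is precisely this combination that breaks down for a product of an elliptic curve with a toric surface other than $\mathbb{P}^2$ or $\mathbb{P}^1\times\mathbb{P}^1$, which is why the theorem is stated only for the three listed families (cf.\ Remark \ref{other toric surface}).
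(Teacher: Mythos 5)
Your overall strategy --- pairing the multiplication map on the Abelian factor with the toric Frobenius on the projective factor, comparing the Riemann--Roch asymptotics of $\chi(\mathcal{O},\underline{m}^{*}E)$ against bounds on $\hom+\ext^{2}$, and using adjunction along the \'etale factor so that only $(\mu_m\times\id)^{*}E$ (which genuinely is tilt-stable) ever appears --- is the paper's strategy. But there is a genuine gap at exactly the step you flag as ``the hard part''. You reduce only to rational $\beta$ and keep the full inequality $\ch_3^{\beta}(E)\le\tfrac16\alpha^2H^2.\ch_1^{\beta}(E)$ as the target, so your even-Ext bound must have leading coefficient \emph{exactly} $B$. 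No version of the \'etale/Frobenius machinery produces such a sharp asymptotic: Proposition \ref{approx ext} (i.e.\ Section 7 of \cite{bms14}) only yields $O(m^{4})$ with an uncontrolled constant. The paper sidesteps this entirely by first invoking Theorem \ref{reduction thm} (\cite{bms14}, Theorem 5.4): it suffices to prove $\ch_3^{\bar{\beta}(E)}(E)\le 0$ for $\bar{\beta}$-stable $E$ with $\ch_0(E)\ge 0$ and $\bar{\beta}(E)\in[0,1)$. With the target constant equal to zero, one only needs $\hom+\ext^{2}=O(m^{4})$ against a leading term $m^{6}\ch_3^{\bar{\beta}}(E)$ --- a growth-rate estimate, not a sharp one. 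Without this reduction your argument cannot close as written.

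Three further points. First, the input to the $O(m^{4})$ estimate is the vanishing $\hom(\mathcal{O}(H'),\underline{m}^{*}E)=0$ and $\ext^{2}(\mathcal{O}(-H'),\underline{m}^{*}E)=0$ for suitable ample $H'$; you gesture at the tilt-Bogomolov--Gieseker inequality instead, but the actual mechanism is the tilt-stability of line bundles (Lemma \ref{tilt-stab line bdl}) combined with a slope comparison (Lemma \ref{ext vanish}), applied after pushing the Frobenius factor forward via Thomsen's theorem; verifying that the resulting twists $\mathcal{O}(-K_X+f)\otimes L_j^{*}$ have ample (resp.\ anti-ample) $\ch_1^{\bar{\beta}}$ is where the real computation lies and where the restriction to $\mathbb{P}^1$, $\mathbb{P}^2$, $\mathbb{P}^1\times\mathbb{P}^1$ enters. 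Second, the paper uses $f^{(m^2,m)}$ rather than your $\mu_m\times F_m$ precisely so that $\ch_k$ scales homogeneously by $m^{2k}$; this makes the twist by $\mathcal{O}(-pqH)$ in the rational case produce exactly $q^{6}\ch^{p/q}(E)$ and eliminates the mixed-degree bookkeeping you describe. Third, $\bar{\beta}(E)$ is determined by $E$ and cannot be assumed rational; the paper handles irrational $\bar{\beta}$ separately via Dirichlet approximation with estimates uniform along the approximating sequence.
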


\begin{rmk}
In \cite{mar16} and \cite{sch16}, counter-examples 
for Conjecture \ref{bg conj} were obtained when 
$X$ is the blow-up of a smooth projective threefold 
at a point. 
Furthermore, there exists a counter-example even when 
$X$ is a Calabi-Yau threefold containing a plane. 
For the latter, see the appendix of this paper. 
\end{rmk}

%============================================================--

\subsection{Reduction Theorem.} 
In this subsection, we recall the further reduction 
of Conjecture \ref{bg conj} due to \cite{bms14}. 
First we recall the notion of $\bar{\beta}$-stability. 

\begin{defin}
Let $E \in \Coh^{\beta}(X)$ 
be a $\nu_{\alpha, \beta}$-semistable object. 
\begin{enumerate}
\item We define 
\[
\bar{\beta}(E):=
\begin{cases}
\frac{H^2.\ch_{1}(E)-\sqrt{\overline{\Delta}_{H}(E)}}
{H^3.\ch_{0}(E)} & (\ch_{0}(E) \neq 0) \\
\frac{H.\ch_{2}(E)}{H^2.\ch_{1}(E)} & (\ch_{0}(E)=0), 
\end{cases}
\]
where 
\[
\overline{\Delta}_{H}(E)
:=(H^2.\ch_{1}(E))^2-2(H^3.\ch_{0}(E))(H.\ch_{2}(E)). 
\]

\item $E$ is $\bar{\beta}$-{\it semistable} (resp. {\it stable}) if 
there exists an open neighborhood $V$ of $(0, \bar{\beta}(E))$ 
in $(\alpha, \beta)$-plane such that for every 
$(\alpha, \beta) \in V$ with $\alpha >0$, 
$E$ is $\nu_{\alpha, \beta}$-semistable 
(resp. stable). 
\end{enumerate}
\end{defin}

\begin{rmk}
In \cite{bmt14a}, it was shown that 
$\overline{\Delta}_{H}(E)$ is non-negative 
for every $\nu_{\alpha, \beta}$-semistable object $E$. 
\end{rmk}

Then Conjecture \ref{bg conj} is reduced 
as follows: 

\begin{thm}[(\cite{bms14}, Theorem 5.4)]
\label{reduction thm}
Assume that for every $\bar{\beta}$-stable object $E$
with $\ch_{0}(E) \geq 0$ and  $\bar{\beta}(E) \in [0, 1)$, 
we have 
\[
\ch_{3}^{\bar{\beta}(E)}(E) \leq 0. 
\]
Then Conjecture \ref{bg conj} holds for 
every $\alpha, \beta$. 
\end{thm}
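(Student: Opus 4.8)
The plan is to take Conjecture \ref{bg conj}, rewrite it as the non-negativity of a single quadratic form in the tilt-stability invariants, and then propagate that non-negativity along numerical walls down to the boundary line $\alpha=0$, where it collapses to exactly the hypothesis. First I would recall from \cite{bmt14a} that, up to a positive normalisation, Conjecture \ref{bg conj} is equivalent to the inequality
\[
Q_{\alpha,\beta}(E):=\alpha^{2}\bar{\Delta}_{H}(E)+4\left(H.\ch_{2}^{\beta}(E)\right)^{2}-6\left(H^{2}.\ch_{1}^{\beta}(E)\right)\ch_{3}^{\beta}(E)\ge 0
\]
holding for every $\nu_{\alpha,\beta}$-semistable object $E$. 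A direct substitution of the relation $H.\ch_{2}^{\beta}(E)=\tfrac{1}{2}\alpha^{2}H^{3}.\ch_{0}^{\beta}(E)$ on the locus $\nu_{\alpha,\beta}(E)=0$ confirms the equivalence, and shows $Q_{\alpha,\beta}(E)=(H^{2}.\ch_{1}^{\beta}(E))(\alpha^{2}H^{2}.\ch_{1}^{\beta}(E)-6\ch_{3}^{\beta}(E))$ there. The key structural feature is that, by the Remark following the definition of $\bar\beta$, the leading coefficient $\bar{\Delta}_{H}(E)$ is non-negative, so $Q_{\alpha,\beta}(E)$ is non-decreasing in $\alpha^{2}$; hence the binding case is always the smallest admissible $\alpha$, which pushes the problem toward the line $\alpha=0$.

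Next I would carry out the boundary computation that links this form to the hypothesis. For $E$ with $\ch_{0}(E)\neq 0$ the wall $\{\nu_{\alpha,\beta}(E)=0\}$ is a semicircle whose endpoint on $\alpha=0$ is precisely $\beta=\bar\beta(E)$. Writing $\bar\beta=\bar\beta(E)$ and using twist-invariance of $\bar{\Delta}_{H}$, one finds
\[
H.\ch_{2}^{\bar\beta}(E)=0,\qquad H^{2}.\ch_{1}^{\bar\beta}(E)=\sqrt{\bar{\Delta}_{H}(E)},
\]
so that
\[
Q_{0,\bar\beta}(E)=-6\sqrt{\bar{\Delta}_{H}(E)}\,\ch_{3}^{\bar\beta}(E).
\]
Since $\sqrt{\bar{\Delta}_{H}(E)}\ge 0$, the inequality $Q_{0,\bar\beta}(E)\ge 0$ is literally the hypothesis $\ch_{3}^{\bar\beta(E)}(E)\le 0$. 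Thus, granting the hypothesis, it remains to show that non-negativity of $Q$ at this boundary point forces $Q_{\alpha,\beta}(E)\ge 0$ throughout the region where $E$ is tilt-semistable.

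Before the main propagation, I would dispose of the normalisations built into the hypothesis. Tensoring by $\mathcal{O}_{X}(mH)$ preserves tilt-stability and $Q$ while sending $\bar\beta(E)\mapsto\bar\beta(E)+m$, so I may assume $\bar\beta(E)\in[0,1)$; the derived dual sends tilt-semistable objects to tilt-semistable objects and flips the sign of $\ch_{0}$, so I may assume $\ch_{0}(E)\ge 0$; and a $\bar\beta$-semistable object admits a Jordan--Hölder filtration by $\bar\beta$-stable factors of the same $\bar\beta$, over which $\ch_{3}^{\bar\beta}$ is additive, so the hypothesis for stable objects yields it for semistable ones. After these reductions the hypothesis covers exactly the building blocks needed at the boundary.

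The hard part will be the propagation, which I would run as an induction on the non-negative quantity $\bar{\Delta}_{H}(E)$. If $E$ remains tilt-semistable along the wall all the way to $(\alpha,\beta)=(0,\bar\beta(E))$, then $E$ is $\bar\beta$-semistable, and the boundary computation together with the monotonicity of $Q$ in $\alpha^{2}$ closes the argument. Otherwise $E$ is destabilised at some actual wall, producing a tilt Jordan--Hölder filtration whose factors $E_{i}$ satisfy $0\le\bar{\Delta}_{H}(E_{i})<\bar{\Delta}_{H}(E)$; by the inductive hypothesis $Q\ge 0$ holds for each $E_{i}$, and reassembling these along the wall gives $Q_{\alpha,\beta}(E)\ge 0$. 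The delicate points here are precisely the ingredients that make this a well-founded induction: local finiteness and the nested (Bertram-type) semicircular structure of the walls for $E$, the strict drop of $\bar{\Delta}_{H}$ when passing to destabilising factors, and the fact that the quadratic-form formalism of the first step is exactly what allows the factors' inequalities to recombine into the non-negativity of $Q_{\alpha,\beta}(E)$ rather than a weaker pointwise bound.
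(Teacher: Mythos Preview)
The paper does not give its own proof of this statement: Theorem~\ref{reduction thm} is simply quoted from \cite{bms14} (their Theorem~5.4) without argument, so there is no in-paper proof to compare against. Your proposal is an accurate outline of the argument actually carried out in \cite{bms14}: the reformulation of Conjecture~\ref{bg conj} as non-negativity of the quadratic form $Q_{\alpha,\beta}$, the reduction to the boundary point $(0,\bar\beta(E))$ via monotonicity in $\alpha^{2}$, the normalisations by tensoring with $\mathcal{O}_{X}(mH)$ and derived duality, and the well-founded induction on $\bar\Delta_{H}$ through wall-crossing are precisely the steps used there. One small point worth making explicit is that the induction is well-founded because $\bar\Delta_{H}$ takes values in a discrete subset of $\mathbb{R}_{\ge 0}$ (a consequence of the integrality of $H.\ch$), not merely because the drop at each wall is strict; you allude to this but it deserves a sentence.
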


%%====================================================================-
%=========================================================================

\section{Preparation for the Main Theorem} 
\label{prep}

In this section, we collect key results which 
we will use in the proof of our main theorem. 
The first one is about the toric Frobenius push forward
of line bundles: 

\begin{thm}[(\cite{tho00})]
\label{toric frob}
Let $Y$ be a smooth projective toric variety, 
let $\{D_{\rho}\}_{\rho}$ be 
the torus invariant divisors. 
For $m \in \mathbb{Z}_{>0}$, denote 
the toric Frobenius morphism by 
$\underline{m} \colon Y \to Y$. 
Then for every divisor $D$ on $Y$, 
we have 
\[
\underline{m}_{*}\mathcal{O}
\left(D
\right)
=\bigoplus_{j} L_{j}^{*\oplus \eta_{j}}, 
\]
where 
\[
L_{j}:=\mathcal{O}
\left(\frac{1}{m}
\left(-D+\sum_{\rho}a_{\rho}D_{\rho}
\right)
\right). 
\]
Here, integers $0 \leq a_{\rho} \leq m-1$ move so that 
$L_{j}$ becomes an integral divisor 
and $\eta_{j}$ counts the multiplicity of 
$\{a_{\rho}\}$ which defines the same $L_{j}$. 
\end{thm}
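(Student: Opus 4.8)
The plan is to exploit the equivariant structure of $\underline{m}$, reduce the computation to the affine toric charts, and split the push-forward canonically into rank-one pieces indexed by the finite group $M/mM$. The only geometric input beyond combinatorial bookkeeping will be the smoothness of $Y$, which guarantees that each such piece is an honest line bundle. To set up notation, let $N$ be the cocharacter lattice of the torus, $M=N^{\vee}$ its dual, and $\Sigma$ the fan of $Y$, so that the rays $\rho\in\Sigma(1)$ index the invariant divisors $D_{\rho}$. The multiplication map $\times m\colon N\to N$ carries every cone of $\Sigma$ into itself, hence induces the toric morphism $\underline{m}\colon Y\to Y$; its dual is again $\times m\colon M\to M$, so on an affine chart $U_{\sigma}=\Spec\mathbb{C}[\sigma^{\vee}\cap M]$ the associated ring map sends $\chi^{u}\mapsto\chi^{mu}$. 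In particular $\underline{m}$ is finite and flat of degree $m^{\dim Y}$, which will serve as a numerical check since $\sum_{j}\eta_{j}$ must equal $m^{\dim Y}$.

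First I would reduce to the affine charts and introduce the grading. Push-forward commutes with restriction to the charts $U_{\sigma}$ for maximal $\sigma\in\Sigma$, so it suffices to describe $\underline{m}_{*}\mathcal{O}(D)|_{U_{\sigma}}$ as a module over $R_{\sigma}:=\mathbb{C}[\sigma^{\vee}\cap M]$, together with its gluing. Because the ring map lands in the subring generated by the $m$-th powers, the target ring is $M$-graded and splits as an $R_{\sigma}$-module into pieces indexed by cosets $\bar{w}\in M/mM$; multiplication by the image $\chi^{mu}$ of any source monomial preserves each coset, so each graded piece is an $R_{\sigma}$-submodule. Twisting by $\mathcal{O}(D)$ merely shifts the weights of the sections, so the same $M/mM$-grading applies verbatim.

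Next I would show each graded piece is a line bundle. Writing $D=\sum_{\rho}c_{\rho}D_{\rho}$ and fixing a representative $w$ of $\bar{w}$, the sections in class $\bar{w}$ have weights $w+m\lambda$ constrained by $\langle w+m\lambda,\, n_{\rho}\rangle\geq -c_{\rho}$ for the rays of $\sigma$, equivalently $\langle\lambda,\,n_{\rho}\rangle\geq\lceil -(c_{\rho}+\langle w,n_{\rho}\rangle)/m\rceil$. Since $\sigma$ is smooth its primitive ray generators extend to a basis of $N$, so this system cuts out a translate of $\sigma^{\vee}\cap M$, and the piece is therefore a free $R_{\sigma}$-module of rank one, generated in a single weight. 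Concretely, on $U_{\sigma}\cong\mathbb{A}^{n}$ with $R_{\sigma}=\mathbb{C}[x_{1},\dots,x_{n}]$ the ring map is $x_{i}\mapsto x_{i}^{m}$, so $\underline{m}_{*}\mathcal{O}$ is the free $\mathbb{C}[x_{1}^{m},\dots,x_{n}^{m}]$-module with basis the monomials $x^{a}$, $0\leq a_{i}\leq m-1$, each spanning a rank-one summand. This is exactly where smoothness is used. Recording the generating weight chart by chart and noting that the $M/mM$-grading is canonical, hence independent of the chart, I would conclude that the pieces glue to genuine line bundles on $Y$.

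Finally I would read off the divisor classes and collect multiplicities. Unwinding the inequalities above, the residue data $\bar{w}$ is encoded by integers $0\leq a_{\rho}\leq m-1$, and the summand in class $\bar{w}$ is identified with $L_{j}^{*}$ for $L_{j}=\mathcal{O}\!\left(\tfrac{1}{m}\bigl(-D+\sum_{\rho}a_{\rho}D_{\rho}\bigr)\right)$; the integrality condition on $L_{j}$ is precisely the requirement that a weight of the above form land in the lattice $M$. Distinct residues $\{a_{\rho}\}$ may yield the same integral divisor $L_{j}$, and $\eta_{j}$ records how many do, giving the stated decomposition. I expect the main obstacle to be the line-bundle step: proving that every $M/mM$-graded piece is invertible genuinely requires smoothness (over a singular cone the pieces are only reflexive rank-one modules and need not be Cartier), and the secondary difficulty is the careful combinatorial normalization — getting the factor $1/m$, the signs, and the range $0\leq a_{\rho}\leq m-1$ exactly right when passing from the local charts to the global formula summed over all rays $\rho$.
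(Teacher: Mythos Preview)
The paper does not give its own proof of this statement; it is quoted as a result of Thomsen \cite{tho00} and used as a black box throughout Section~\ref{pf of main thm}. So there is no in-paper argument to compare your proposal against.

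That said, your outline is a sound way to establish the result and is close in spirit to the original argument: reduce to the affine toric charts, split the push-forward via the canonical $M/mM$-grading coming from the ring map $\chi^{u}\mapsto\chi^{mu}$, use smoothness of each maximal cone to see that every graded piece is free of rank one, and then read off the divisor class by tracking the generating weight chart by chart. The one place your sketch is slightly loose is the passage from the intrinsic index set $M/mM$ (of size $m^{\dim Y}$) to the tuples $\{a_{\rho}\}_{\rho\in\Sigma(1)}$ with $0\le a_{\rho}\le m-1$ (of which there are $m^{|\Sigma(1)|}$, generally more). You correctly signal that the integrality constraint on $L_{j}$ and the multiplicity $\eta_{j}$ absorb this discrepancy, but in a complete write-up you should make explicit why the number of admissible tuples is exactly $m^{\dim Y}$, so that $\sum_{j}\eta_{j}$ matches the rank of $\underline{m}_{*}\mathcal{O}(D)$; this follows from the exact sequence $0\to M\to\mathbb{Z}^{\Sigma(1)}\to\Pic(Y)\to 0$.
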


\begin{rmk}
\label{rmk toric frob}
Let $Y=\mathbb{P}^n$ be a projective space. 
Let $a_{\rho}$ be as in Theorem \ref{toric frob}. 
Then we have 
\[
-K_{Y}-\frac{\sum_{\rho}a_{\rho}D_{\rho}}{m}
= \sum_{\rho}D_{\rho}-\frac{\sum_{\rho}a_{\rho}D_{\rho}}{m}
\geq \frac{1}{m}\sum_{\rho}D_{\rho}
\]
and hence 
$-K_{Y}-\frac{\sum_{\rho}a_{\rho}D_{\rho}}{m}$ 
is ample on $Y$. 
This fact will be used in 
Section \ref{pf of main thm}. 
\end{rmk}

The next one is about the preservation of tilt-stability 
under the pull back by finite \'etale morphisms: 

\begin{prop}[(\cite{bms14}, Proposition 6.1)]
\label{etale pull back}
Let $f \colon Y \to X$ be a finite 
\'etale surjective morphism 
between smooth projective threefolds. 
Let $\omega$ be an ample $\mathbb{R}$-divisor on $X$, 
$B$ an $\mathbb{R}$-divisor on $X$. 
Let $E \in D^b(X)$. Then 
\begin{enumerate}
\item $\nu_{f^{*}\omega, f^{*}B}(f^{*}E)=\nu_{\omega, B}(E)$. 

\item $f^{*}E \in \Coh^{f^{*}\omega, f^{*}B}(Y)$ 
if and only if  
$E \in \Coh^{\omega, B}(X)$. 

\item $f^{*}E$ is $\nu_{f^{*}\omega, f^{*}B}$-semistable 
(resp. stable) if and only if 
$E$ is $\nu_{\omega, B}$-semistable (resp. stable). 
\end{enumerate}
\end{prop}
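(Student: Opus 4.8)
The plan is to deduce everything from the basic functoriality of a finite \'etale morphism $f$ of degree $d:=\deg f$. Namely: $f^{*}$ is exact and commutes with cohomology sheaves, $\mathcal{H}^{i}(f^{*}E)=f^{*}\mathcal{H}^{i}(E)$; it is faithful, since $f$ is faithfully flat, so $f^{*}G=0$ forces $G=0$; it is compatible with twisted Chern characters, $\ch^{f^{*}B}(f^{*}E)=f^{*}\ch^{B}(E)$, because $\ch^{B}=e^{-B}.\ch$ and $f^{*}$ is a ring map on cohomology; $f^{*}$ sends ample classes to ample classes, so $f^{*}\omega$ is a legitimate polarization on $Y$; and intersection numbers rescale, $\int_{Y}f^{*}\gamma=d\int_{X}\gamma$ for a top-dimensional class $\gamma$. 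Part (1) is then a direct computation: every term in the numerator and denominator of $\nu_{\omega,B}$ is a top-dimensional intersection number built from $\omega$, $B$ and $\ch(E)$, so pulling back multiplies both by $d$ and the ratio is unchanged; similarly $\mu_{f^{*}\omega,f^{*}B}(f^{*}E)=d\cdot\mu_{\omega,B}(E)$ for a coherent sheaf $E$.

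Next I would establish compatibility of $\mu$-semistability, of the Harder--Narasimhan filtration, and hence of the torsion pair $(\mathcal{T}_{\omega,B},\mathcal{F}_{\omega,B})$, with $f^{*}$. One implication is immediate: a subsheaf destabilizing $E$ pulls back, by exactness of $f^{*}$, to a subsheaf destabilizing $f^{*}E$, since slopes only rescale by the positive constant $d$. For the converse I would pass to a Galois \'etale cover $\tilde{f}=h\circ f\colon\tilde{Y}\to Y\to X$ dominating $f$ (again finite \'etale, as we work in characteristic $0$), with group $G$; if $E$ is $\mu_{\omega,B}$-semistable but $\tilde{f}^{*}E$ is not, its maximal destabilizing subsheaf $F\subset\tilde{f}^{*}E$ is unique, and each $g\in G$ fixes the pulled-back data since $\tilde{f}g=\tilde{f}$, so $g^{*}F$ is again maximal destabilizing, whence $g^{*}F=F$; as $|G|$ is invertible, $F$ descends along $\tilde{f}$ to a subsheaf $F_{0}\subset E$ with $\mu_{\omega,B}(F_{0})=\tfrac{1}{\deg\tilde{f}}\mu_{\tilde{f}^{*}\omega,\tilde{f}^{*}B}(F)>\mu_{\omega,B}(E)$, a contradiction. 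Applying the easy implication to $h$ gives the statement for $f$, and the same reasoning shows that the HN factors of $f^{*}G$ are precisely the pullbacks of those of $G$. Part (2) is now formal: since $\mathcal{H}^{i}(f^{*}E)=f^{*}\mathcal{H}^{i}(E)$ and HN factors pull back, $G\in\mathcal{T}_{\omega,B}$ (resp.\ $\mathcal{F}_{\omega,B}$) iff $f^{*}G\in\mathcal{T}_{f^{*}\omega,f^{*}B}$ (resp.\ $\mathcal{F}_{f^{*}\omega,f^{*}B}$), using faithfulness of $f^{*}$ for the ``only if'' directions, and hence $E\in\Coh^{\omega,B}(X)$ iff $f^{*}E\in\Coh^{f^{*}\omega,f^{*}B}(Y)$.

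For part (3) I would run exactly the same two-step scheme one tilt higher. By part (2), $f^{*}$ restricts to an exact functor $\Coh^{\omega,B}(X)\to\Coh^{f^{*}\omega,f^{*}B}(Y)$ between the tilted hearts, and by part (1) it preserves the tilt-slope $\nu$. Hence a destabilizing subobject of $E$ pulls back to one of $f^{*}E$, giving one implication; for the other I would again use that tilt-stability has the Harder--Narasimhan property, pass to the Galois cover, invoke uniqueness of the maximal destabilizing subobject of $\tilde{f}^{*}E$ to see it is $G$-invariant, and descend it to a destabilizing subobject of $E$. The stable case is handled by the same argument, with strict inequalities and with Jordan--H\"older data in place of HN data.

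The main obstacle is precisely this descent step in the converse direction: the numerical identities and the ``pull back a destabilizer'' implications are bookkeeping, but passing from (semi)stability of $E$ to (semi)stability of $f^{*}E$ genuinely requires uniqueness of the maximal destabilizing subobject together with effective Galois descent for the resulting $G$-equivariant object. One must be careful that the descent is carried out in the appropriate category --- coherent sheaves for the $\mu$-statement and the tilted heart for the $\nu$-statement --- and that $g^{*}$ really fixes $\tilde{f}^{*}\omega$ and $\tilde{f}^{*}B$, which is what forces $g^{*}F=F$.
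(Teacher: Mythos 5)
The paper offers no proof of this proposition at all---it is quoted from \cite{bms14}, Proposition 6.1---so your argument has to stand on its own. Part (1), part (2), and the \emph{semistable} half of part (3) are correct and follow the standard route: slopes are ratios of top intersection numbers and both scale by $\deg f$; the easy implication comes from pulling back a destabilizing subobject (using exactness and faithfulness of $f^{*}$); the hard implication comes from passing to the Galois closure, observing that the maximal destabilizing subobject is unique and hence preserved by the canonical $G$-linearization of $\tilde{f}^{*}E$, and descending it. Two points you should flag but which are standard: the existence of maximal destabilizing subobjects for $\nu_{\alpha,\beta}$ requires noetherianity of the tilted heart, and the descent of the $G$-invariant subobject takes place in the equivariant derived category of the free quotient (the ``$|G|$ invertible'' remark is not what makes this work).

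The genuine gap is your last sentence, where the stable case is dispatched ``with Jordan--H\"older data in place of HN data.'' A Jordan--H\"older filtration of a strictly semistable object is not unique---only its factors are, up to permutation---so it carries no canonical $G$-invariance and the descent argument collapses. In fact the implication ``$E$ stable $\Rightarrow f^{*}E$ stable'' is false for any finite \'etale $f$ of degree $>1$: take a line bundle $L$ on $Y$ whose orbit under the deck group $G$ of the Galois closure consists of pairwise non-isomorphic line bundles (all with the same class in $\NS$, hence the same $\nu_{\alpha,\beta}$, and each tilt-stable by Lemma \ref{tilt-stab line bdl}). Then $f^{*}f_{*}L\cong\bigoplus_{g\in G}g^{*}L$ is a direct sum of $\deg f$ stable objects of equal tilt slope, hence strictly semistable; while $f_{*}L$ itself is tilt-stable, because any equal-slope subobject pulls back to a $G$-invariant partial direct sum of the pairwise non-isomorphic $g^{*}L$, which must be $0$ or everything. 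So the correct statement is the semistable ``iff'' together with the single implication ``$f^{*}E$ stable $\Rightarrow E$ stable''; for $E$ stable one only gets that $f^{*}E$ is semistable with Jordan--H\"older factors permuted transitively by $G$. This does not endanger the present paper: every application of the proposition in Section \ref{pf of main thm} feeds into Lemma \ref{ext vanish}, whose proof only uses Hom-vanishing between tilt-\emph{semistable} objects of distinct limiting slopes, for which semistability of $f^{*}E$ suffices. But as written, your proof of the stable direction does not work, and cannot be made to work.
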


\begin{ex}
Let $A$ be an Aberian variety of $\dim \leq 2$, 
let $X=Y \times A$ be a product threefold. 
Let $\underline{m} \colon A \to A$ be 
a multiplication map $(m \in \mathbb{Z}_{>0})$. 
Then $\id_{Y} \times \underline{m} \colon X \to X$ 
is a finite \'etale surjective morphism. 
Hence we can apply the above proposition 
to $\id_{Y} \times \underline{m}$. 
\end{ex}

The third one is about the tilt-stability of line bundles: 

\begin{lem}[(\cite{bms14}, Corollary 3.11)]
\label{tilt-stab line bdl}
Let $X$ be a smooth projective threefold, 
$H$ an ample divisor on $X$. 
Assume that for every effective divisor $D$ on $X$, 
we have $H.D^2 \geq 0$. 
Then for every line bundle $L$ on $X$, 
$\alpha>0$, and $\beta \in \mathbb{R}$, 
$L$ or $L[1]$ is $\nu_{\alpha, \beta}$-stable. 
\end{lem}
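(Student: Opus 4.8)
The plan is to show directly that $L$ (resp.\ $L[1]$) admits no destabilizing subobject in the tilted heart $\Coh^{\beta}(X)$. Since $L$ has rank one, it is $\mu_{\omega,B}$-stable, and so it lies in $\mathcal{T}_{\omega,B}$ when $\mu_{\omega,B}(L)>0$ and in $\mathcal{F}_{\omega,B}$ when $\mu_{\omega,B}(L)\leq 0$; in particular exactly one of $L$ and $L[1]$ belongs to $\Coh^{\beta}(X)$. I would treat first the case $L\in\Coh^{\beta}(X)$ and argue by contradiction: suppose there is a short exact sequence $0\to F\to L\to Q\to 0$ in $\Coh^{\beta}(X)$ with $F$ a proper nonzero subobject and $\nu_{\alpha,\beta}(F)\geq\nu_{\alpha,\beta}(L)$.

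The first real step is to pin down the shape of $F$ and $Q$. Taking the long exact sequence of cohomology sheaves for the standard $t$-structure and using $\mathcal{H}^{-1}(L)=0$, one gets $\mathcal{H}^{-1}(F)=0$, so that $F$ is an honest sheaf lying in $\mathcal{T}_{\omega,B}$; a short argument --- the kernel sheaf of the nonzero map $F\to L$ would lie in $\mathcal{F}_{\omega,B}$, which consists of torsion-free sheaves, yet it would have to be a torsion sheaf --- then shows that $F$ is a subsheaf of the line bundle $L$ and that $Q$ is a sheaf. Being torsion-free of rank one inside $L$, we may write $F\cong L(-D)\otimes\mathcal{I}_{Z}$ for an effective divisor $D\geq 0$ and a closed subscheme $Z\subset X$ of codimension $\geq 2$, with $Q$ a torsion sheaf supported on $D\cup Z$. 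Then $\ch_{0}(F)=1$, $\ch_{1}(F)=\ch_{1}(L)-D$ and $\ch_{2}(F)=\frac{1}{2}(\ch_{1}(L)-D)^{2}-[Z]$, and I would feed these into the tilt-slope inequality $\nu_{\alpha,\beta}(F)\geq\nu_{\alpha,\beta}(L)$.

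Because $F$ and $L$ both have rank one, the $\alpha^{2}H^{3}\ch_{0}^{\beta}$-terms in the two numerators cancel; clearing the denominators $H^{2}.\ch_{1}^{\beta}(F)$ and $H^{2}.\ch_{1}^{\beta}(L)$, which are strictly positive because both objects lie in $\mathcal{T}_{\omega,B}$ (and membership of $F$ forces $H^{2}.\ch_{1}^{\beta}(L)>H^{2}.D$), turns the inequality into a polynomial inequality among the intersection numbers $H^{2}.D$, $H.D^{2}$, $H.\ch_{1}^{\beta}(L).D$, $H.(\ch_{1}^{\beta}(L))^{2}$, $H^{3}$ and $H.[Z]$. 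The key claim is that this inequality is impossible: it is contradicted by combining $H^{2}.D\geq 0$ (ampleness of $H$ and effectivity of $D$), $H.[Z]\geq 0$ (effectivity of $Z$), the Hodge-index inequality on a general smooth member of $|H|$, and --- this is where the hypothesis is used --- the assumption $H.D^{2}\geq 0$, which controls the sign of the contribution of $\frac{1}{2}(\ch_{1}(L)-D)^{2}$ to $\ch_{2}(F)$. Thus $D=0$ and $Z=\emptyset$, i.e.\ $F=L$, contradicting that $F$ is proper. For the case $L[1]\in\Coh^{\beta}(X)$ I would run the same bookkeeping on a subobject of $L[1]$; the relevant rank-one sheaf now appears as the $(-1)$-st cohomology sheaf, so a term $+\frac{1}{2}H.D^{2}$ enters the numerator, and once more the positivity $H.D^{2}\geq 0$ is exactly what makes the inequality go the wrong way.

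I expect the main obstacle to be twofold. First, the structural assertion that every subobject of $L$ (or $L[1]$) in $\Coh^{\beta}(X)$ is, up to shift, a twisted ideal sheaf $L(-D)\otimes\mathcal{I}_{Z}$ with $D$ effective requires careful tracking of which cohomology sheaves land in $\mathcal{T}_{\omega,B}$ and which in $\mathcal{F}_{\omega,B}$, together with torsion-freeness of the line bundle. Second, and more seriously, one must check that the resulting polynomial inequality in the intersection numbers genuinely fails in every configuration; this is where the interplay between the Hodge-index inequality, the positivity $H.D^{2}\geq 0$, the ampleness of $H$, and the membership constraint $H^{2}.\ch_{1}^{\beta}(L)>H^{2}.D$ has to be orchestrated. (One should also note that a marginal subobject of the form $L\otimes\mathcal{I}_{Z}$ with $\dim Z=0$ has exactly the same tilt-slope as $L$, so the substantive content of the lemma is tilt-semistability; the argument above is not affected by this.)
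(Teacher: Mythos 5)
First, note that the paper does not prove this lemma at all: it is imported verbatim from \cite{bms14}, Corollary 3.11. So your attempt is being measured against the argument in that reference rather than anything in this paper. Your proposal has a genuine gap at its first structural step, namely the claim that a destabilizing subobject $F$ of $L$ in $\Coh^{\beta}(X)$ must be a rank-one subsheaf $L(-D)\otimes\mathcal{I}_{Z}$. A monomorphism $F\hookrightarrow L$ in the tilted heart need not be injective as a map of sheaves: its sheaf-theoretic kernel is $\mathcal{H}^{-1}(Q)$, which is indeed a torsion-free sheaf in $\mathcal{F}_{\omega,B}$, but your parenthetical reason that it ``would have to be a torsion sheaf'' is unsupported. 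Since the image of $F\to L$ is a rank-one subsheaf of $L$, that kernel has rank $\rk(F)-1$, and nothing bounds $\rk(F)$ by $1$; the only a priori constraint on subobjects in the heart is $0\le H^{2}.\ch_{1}^{\beta}(F)\le H^{2}.\ch_{1}^{\beta}(L)$, which is a bound on $\ch_1^\beta$, not on rank. Your argument therefore silently discards all subobjects with $\mathcal{H}^{-1}(Q)\neq 0$, i.e.\ exactly the higher-rank destabilizers that a proof of tilt-stability must rule out. The same issue recurs in the $L[1]$ case, where $\mathcal{H}^{0}(F)$ can be a sheaf of positive rank in $\mathcal{T}_{\omega,B}$, not only a torsion sheaf.

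Even granting the reduction to $F=L(-D)\otimes\mathcal{I}_{Z}$, the ``key claim'' that the resulting inequality in $H^{2}.D$, $H.D^{2}$, $H.\ch_{1}^{\beta}(L).D$, $H.(\ch_{1}^{\beta}(L))^{2}$ and $H.[Z]$ is always violated is asserted rather than verified, and it is not an immediate consequence of Hodge index together with $H.D^{2}\ge 0$ (the cross term $H.\ch_{1}^{\beta}(L).D$ must be controlled using the membership constraints on $F$ and $L$, which you only mention in passing). The argument in \cite{bms14} is organized quite differently precisely to avoid the first problem: one shows that $L$ (resp.\ $L[1]$) is tilt-stable in the large-volume limit and then excludes walls using the structure theorem for walls together with the Bogomolov-type inequality $\bar{\Delta}_{H}\ge 0$ satisfied by the tilt-semistable factors of a destabilizing sequence; this controls destabilizers of arbitrary rank without ever classifying subobjects of $L$, and the hypothesis $H.D^{2}\ge 0$ for effective $D$ enters through that discriminant computation. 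As it stands your proposal proves at most that $L$ has no rank-one destabilizing subsheaf, which is strictly weaker than the lemma.
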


\begin{ex}
\label{ex of tilt-stab line bdl}
\begin{enumerate}
\item Let $C$ be an elliptic curve, $S$ an Abelian surface. 
Let $X$ be $\mathbb{P}^1 \times S, \mathbb{P}^2 \times C$, 
or $\left(\mathbb{P}^1 \times \mathbb{P}^1 \right) \times C$. 
Then the assumption of the above lemma holds 
for  every ample divisor on $X$, 
since there are no negative divisors on 
projective spaces or Abelian varieties. 

\item Let $Y$ be any smooth projective toric 
surface other than $\mathbb{P}^2, 
\mathbb{P}^1 \times \mathbb{P}^1$. 
Let $X=Y \times C$. 
Since there exists a negative curve on $Y$, 
the assumption in the above lemma does not 
hold for any ample divisor on $X$. 
\end{enumerate}
\end{ex}

\begin{rmk}
\label{other toric surface}
The tilt-stability of line bundles 
is crucial in our proof of the main theorem. 
Hence our approach can not apply to 
threefolds in (ii) of Example \ref{ex of tilt-stab line bdl}. 
\end{rmk}

The last one is about the approximation of dimensions 
of certain $\Ext$-groups due to \cite{bms14}. 

\begin{prop}[(\cite{bms14})]
\label{approx ext}
Let $C$ be an elliptic curve, $S$ an Abelian surface. 
Let $X$ be $\mathbb{P}^1 \times S, \mathbb{P}^2 \times C$, 
or $\mathbb{P}^1 \times \mathbb{P}^1 \times C$. 
Let $f^{(m^2, m)}:=\underline{m^2} \times \underline{m} 
\colon X \to X$ 
be the product of the toric Frobenius morphism and 
the multiplication map. 
Let $E \in D^b(X)$ be a two term complex 
concentrated in degree $-1$ and $0$. 
\begin{enumerate}
\item If there exists an ample divisor $H'$ on $X$ 
such that 
\[
\hom
\left(
\mathcal{O}(H'), f^{(m^2, m)*}E
\right)
=0, 
\]
then 
\[
\hom
\left(
\mathcal{O}, f^{(m^2, m)*}E
\right)
=O(m^4). 
\]

\item If there exists an ample divisor $H'$ on $X$ 
such that 
\[
\ext^2
\left(
\mathcal{O}(-H'), f^{(m^2, m)*}E
\right)
=0, 
\]
then 
\[
\ext^2
\left(
\mathcal{O}, f^{(m^2, m)*}E
\right)
=O(m^4). 
\]
\end{enumerate}
\end{prop}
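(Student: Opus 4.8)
\emph{Proof proposal.} The plan is to interleave the two arguments this result is patterned on: \cite{bmsz16}, where the analogue is proved for a toric threefold and the toric Frobenius (via the splitting of Theorem \ref{toric frob}), and \cite{bms14}, where it is proved for an abelian threefold and its multiplication map. Throughout write $X=Y\times A$, with $Y$ the projective-space factor and $A=C$ or $S$; let $p,q$ be the projections, $h=p^*(\text{hyperplane class})$, $\underline m\colon A\to A$ multiplication by $m$, and $f=f^{(m^2,m)}$. In all three cases $\dim Y+\dim A=3$, $f$ is finite flat, $f^*$ acts by $m^2$ on $H^2(X)$, and $\omega_X=p^*\omega_Y$ (since $\omega_A$ is trivial). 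I describe part (1); part (2) should be its Serre dual, run on $E^\vee\otimes p^*\omega_Y$ after writing $\Ext^2_X(\mathcal O_X,f^*E)\cong\Ext^1_X(f^*E,p^*\omega_Y)^\vee$, with $\ext^2(\mathcal O_X(-H'),f^*E)=0$ in place of the hypothesis.

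\emph{Step 1: separate the two directions.} By the projection formula $R\Gamma(X,f^*E)=R\Gamma(X,E\otimes f_*\mathcal O_X)$, and $f_*\mathcal O_X=(\underline{m^2}_*\mathcal O_Y)\boxtimes(\underline m_*\mathcal O_A)$. Theorem \ref{toric frob} writes $\underline{m^2}_*\mathcal O_Y=\bigoplus_j\mathcal O_Y(-jh)^{\oplus\eta_j(m)}$ over a fixed finite set of indices (for $Y=\mathbb P^n$ the bundles are $\mathcal O_Y(-jh)$, $0\le j\le n$), where $\eta_0(m)=1$ and $\eta_j(m)$ is a polynomial in $m$ of degree $\le 2\dim Y$; meanwhile $\underline m_*\mathcal O_A=\bigoplus_{\chi\in\widehat A[m]}\mathcal P_\chi$ is the sum of the $m^{2\dim A}$ numerically trivial line bundles. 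Reassembling the sum over $\chi$ by the projection formula and flat base change,
\[
\hom(\mathcal O_X,f^*E)=\sum_j\eta_j(m)\cdot\hom\big(\mathcal O_X,\,(\id_Y\times\underline m)^*(E\otimes\mathcal O_X(-jh))\big),
\]
and treating $\hom(\mathcal O_X(H'),f^*E)=\hom(\mathcal O_X,f^*E\otimes\mathcal O_X(-H'))$ the same way (write $H'=p^*H_Y+q^*H_A$ and apply Theorem \ref{toric frob} to $\underline{m^2}_*\mathcal O_Y(-H_Y)$),
\[
0=\hom(\mathcal O_X(H'),f^*E)=\sum_{j'}\theta_{j'}(m)\cdot\hom\big(\mathcal O_X(q^*H_A),\,(\id_Y\times\underline m)^*(E\otimes\mathcal O_X(-j'h))\big).
\]
The crucial combinatorial input — using that the torus-invariant divisors of $\mathbb P^n$ are proportional (resp. those of $\mathbb P^1\times\mathbb P^1$ fall into two proportionality classes) and the positivity of Remark \ref{rmk toric frob} — is that, for $m\gg0$, the indices $j$ for which $\eta_j(m)$ attains the top degree $2\dim Y$ are exactly the ones for which $\theta_{j}(m)>0$ in the second display.

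\emph{Step 2: kill the leading term and estimate on the abelian factor.} Every term in the second display is a dimension and they sum to zero, so each vanishes. Fix $j$ with $\eta_j(m)$ of top degree; then $\hom(\mathcal O_X(q^*H_A),(\id_Y\times\underline m)^*F_j)=0$ where $F_j:=E\otimes\mathcal O_X(-jh)$ (a fixed two-term complex). Push forward along $q$: with $G_j:=Rq_*F_j\in D^b(A)$ a fixed complex on the abelian variety $A$, the projection formula gives
\[
\hom\big(\mathcal O_X,(\id_Y\times\underline m)^*F_j\big)=\sum_{\chi\in\widehat A[m]}\mathbb H^0(A,G_j\otimes\mathcal P_\chi),
\]
while $0=\hom(\mathcal O_X(q^*H_A),(\id_Y\times\underline m)^*F_j)=\mathbb H^0\big(A,G_j\otimes\underline m_*\mathcal O_A(-H_A)\big)$; the latter vanishing should force the Harder–Narasimhan slopes of the cohomology sheaves of $G_j$ to have the correct sign. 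Granting that, generic vanishing on $A$ — the cohomological support loci of a semistable sheaf of slope $\le 0$ are proper translated abelian subvarieties of $\widehat A$ of dimension $\le\dim A-1$, hence meet $\widehat A[m]$ in $O(m^{2\dim A-2})$ points, on each of which the relevant cohomology is bounded independently of $m$ — yields $\sum_{\chi\in\widehat A[m]}\mathbb H^0(A,G_j\otimes\mathcal P_\chi)=O(m^{2\dim A-2})$. A short case analysis then bounds each summand of the first display by $O(m^4)$: for $j$ with $\eta_j(m)$ of top degree this is $O(m^{2\dim Y})\cdot O(m^{2\dim A-2})=O(m^{2\dim Y+2\dim A-2})$; for the remaining $j$
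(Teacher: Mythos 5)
The paper never writes this proof out: it is imported wholesale from \cite{bms14}, where the bound $\hom(\mathcal{O},\underline{m}^{*}E)=O(m^4)$ is obtained by a descending induction over twists by $\mathcal{O}(kH)$ using the restriction sequences $0\to\mathcal{O}((k-1)H)\to\mathcal{O}(kH)\to\mathcal{O}_{D}(kH)\to 0$ for $D\in|H|$, the hypothesis $\hom(\mathcal{O}(H'),\cdot)=0$ terminating the induction and the individual steps being controlled by Bogomolov--Gieseker-type estimates for the restrictions to the surfaces $D$. Your route --- splitting $f_{*}\mathcal{O}_X$ via Theorem \ref{toric frob} and the $m$-torsion decomposition of $\underline{m}_{*}\mathcal{O}_A$, then estimating a character sum on the abelian factor --- is genuinely different. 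Step 1 is essentially sound, except that ``exactly the ones for which $\theta_j(m)>0$'' is false in general (for $H_Y=dh$ on $\mathbb{P}^2$ with $d\ge 3$ one also gets $\theta_3>0$ while $\eta_3=0$); fortunately only the inclusion $\{j:\eta_j\ \text{of top degree}\}\subseteq\{j:\theta_j>0\}$ is needed, and that does hold. The proposal also breaks off mid-sentence before the concluding case analysis.

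The genuine gap is in Step 2, at the words ``the latter vanishing \emph{should force} the Harder--Narasimhan slopes \dots\ \emph{Granting that}, generic vanishing \dots'': this is the entire analytic content of the estimate and none of it is proved. Concretely: (i) the implication runs backwards --- a slope inequality yields a $\Hom$-vanishing for semistable sheaves, not conversely --- so extracting slope bounds on the cohomology sheaves of $G_j=\dR q_{*}F_j$ from the single vanishing $\Hom(\mathcal{O}(q^{*}H_A),(\id_Y\times\underline{m})^{*}F_j)=0$ requires an argument. On the elliptic-curve factor one can supply it (a wrong-sign HN factor forces a nonzero map to or from the semistable bundle $\underline{m}_{*}\mathcal{O}_C(\pm H_A)$ by Riemann--Roch, and the hypercohomology spectral sequence has no room for cancellation on a curve), but for $X=\mathbb{P}^1\times S$ the complex $G_j$ has cohomology in three degrees, the differential $H^0(\mathcal{H}^0(G_j)\otimes W)\to H^2(\mathcal{H}^{-1}(G_j)\otimes W)$ prevents the hypothesis from decoupling into statements about the individual sheaves, and the quantity you must sum over $\widehat{S}[m]$ involves $h^1$ and $h^2$ of those sheaves, which no slope condition controls. (ii) The ``generic vanishing'' invoked --- that the cohomological support loci of an arbitrary semistable sheaf of slope $\le 0$ on $A$ are proper translated abelian subvarieties --- is not a standard theorem (the Green--Lazarsfeld/Simpson structure results concern $V^i(\omega)$ and GV-sheaves), and even granting properness, bounding $\#(W\cap\widehat{S}[m])$ for an arbitrary proper closed $W\subset\widehat{S}$ by $O(m^2)$ is Raynaud's theorem on Manin--Mumford. (iii) Your argument makes no use of the tilt-semistability of $E$, which is exactly what the \cite{bms14} proof exploits through the surface restrictions; as the proposition is applied only to $\bar{\beta}$-stable objects, that hypothesis is available and should be used. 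As written, Step 2 restates what must be proved rather than proving it.
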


\begin{proof}
Summarizing the arguments of 
Section 7 in \cite{bms14}, 
we get the result. 
\end{proof}

%=================================================================
%==================================================================

\section{Proof of the Main Theorem}
\label{pf of main thm}

In this section, we prove our main theorem, Theorem \ref{main thm}. 
Let $C$ be an elliptic curve and 
$S$ an Abelian surface. 
Let $X=Y \times Z$, where 
$(Y, Z)=(\mathbb{P}^1, S), (\mathbb{P}^2, C)
, (\mathbb{P}^1 \times \mathbb{P}^1, C)$. 
Let $H$ be an ample divisor on $X$. 
Then $H$ can be written as $H=h+f$, 
where $h, f$ are the pull back of some 
ample divisors on $Y, Z$, respectively. 
For integers $a, b \in \mathbb{Z}_{\geq 0}$, 
let $f^{(a, b)}:=\underline{a} \times \underline{b}$ 
be the product of the toric Frobenius morphism 
$\underline{a}$ on $Y$ 
and the multiplication map 
$\underline{b}$ on $Z$. 
Furthermore, let us denote by 
$D_{\rho} \in \NS(X)$ the pull backs of 
the torus invariant divisors on Y. 

\begin{rmk}
\label{act on cohomology}
Let $m \in \mathbb{Z}_{>0}$. 
Note that $f^{(m^2, m)*}$ acts on the even cohomology 
as follows: 
\[
\bigoplus_{i=0}^{3}H^{2i}(X) 
\ni (x, y, z, w) \mapsto (x, m^2y, m^4z, m^6w)
\in \bigoplus_{i=0}^{3}H^{2i}(X). 
\]

We will use this property in the followings. 
\end{rmk}

Let $E$ be a $\bar{\beta}$-stable object 
with $\ch_{0}(E) \geq 0$ and 
$\bar{\beta}:=\bar{\beta}(E) \in [0, 1)$. 
To prove Theorem \ref{main thm}, it is 
enough to show that 
$\ch_{3}^{\bar{\beta}}(E) \leq 0$ 
by Theorem \ref{reduction thm}. 
We prove it in the following three subsections. 
We start with  two easy lemmas 
which we will frequently use in the 
followings. 

\begin{lem}
\label{adjoint}
For every $E, F \in D^b(X)$, we have 
\[
\Hom\left(
\omega_{X}^{*} \otimes f^{(a, b)}_{*}\left(
E \otimes \omega_{X}
\right), 
F
\right) 
\cong \Hom\left(
E, f^{(a, b)*}F
\right). 
\]
\end{lem}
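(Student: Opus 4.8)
The plan is to deduce this from Serre duality on $D^b(X)$ together with the usual adjunction between $f^{(a,b)*}$ and $f^{(a,b)}_{*}$. Write $n:=\dim X=3$ and recall that, since $X$ is smooth projective, the Serre functor on $D^b(X)$ is $S_X=(-)\otimes\omega_X[n]$; note also that $\omega_X$ is a line bundle, so $(-)\otimes\omega_X$ and $(-)\otimes\omega_X^{*}$ are exact autoequivalences of $D^b(X)$. Finally observe that $f:=f^{(a,b)}=\underline{a}\times\underline{b}$ is a \emph{finite} morphism of the threefold $X$ to itself, being the product of the toric Frobenius morphism (finite) and a multiplication map on an Abelian variety (finite). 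In particular $\mathbf{R}f_{*}$ coincides with the underived $f_{*}$, and we may freely use the adjunction $\Hom(f^{*}A,B)\cong\Hom(A,f_{*}B)$ for $A,B\in D^b(X)$ (all functors derived).

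The computation I would run is the following chain. Starting from the right-hand side and applying Serre duality,
\[
\Hom\left(E,f^{*}F\right)\cong\Hom\left(f^{*}F,\,E\otimes\omega_X[n]\right)^{\vee}.
\]
By the $(f^{*},f_{*})$-adjunction the group on the right is $\Hom\bigl(F,\,f_{*}(E\otimes\omega_X)[n]\bigr)^{\vee}$, and applying Serre duality once more, now to the pair $F$ and $f_{*}(E\otimes\omega_X)[n]$, this becomes
\[
\Hom\bigl(f_{*}(E\otimes\omega_X)[n],\,F\otimes\omega_X[n]\bigr)=\Hom\bigl(f_{*}(E\otimes\omega_X),\,F\otimes\omega_X\bigr),
\]
where the shifts $[n]$ cancel. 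Finally, tensoring by the line bundle $\omega_X^{*}$ is an equivalence, so this last group is isomorphic to $\Hom\bigl(\omega_X^{*}\otimes f_{*}(E\otimes\omega_X),\,F\bigr)$. Chaining these isomorphisms gives the claim. An essentially equivalent route is to invoke Grothendieck--Verdier duality for the finite morphism $f$: since $f$ is finite with smooth source and target of the same dimension, $f^{!}(-)=f^{*}(-)\otimes\omega_f$ with relative dualizing sheaf $\omega_f=\omega_X\otimes f^{*}\omega_X^{*}$, and the $(\mathbf{R}f_{*},f^{!})$-adjunction combined with the projection formula yields the same identity directly.

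I do not expect a genuine obstacle here; the statement is a standard duality manipulation. The only points needing care are that $f^{(a,b)}$ is really a finite morphism, so that no higher direct images intervene and the adjunctions used above are available, and that its source and target are literally the same smooth projective threefold $X$, so that $\omega_X$ has an unambiguous meaning on both sides of the isomorphism and the relative dualizing sheaf is indeed $\omega_X\otimes f^{*}\omega_X^{*}$. Both facts are immediate from the description of $f^{(a,b)}$ as a product of a toric Frobenius morphism and a multiplication map, so the argument reduces to the bookkeeping above.
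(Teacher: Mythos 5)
Your proof is correct and follows essentially the same route as the paper, which likewise just invokes Serre duality together with the $(f^{(a,b)*},f^{(a,b)}_{*})$-adjunction and notes that finiteness (and flatness) of $f^{(a,b)}$ means no derived functors are needed. Your explicit chain of isomorphisms simply spells out what the paper leaves to the reader.
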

\begin{proof}
Use Serre duality and the adjointness 
between $f^{(a, b)*}$ and $f^{(a, b)}_{*}$. 
Note that we do not need to take 
derived functors since 
$f^{(a, b)}$ is finite and flat. 
\end{proof}

\begin{lem}
\label{ext vanish}
Let $E \in \Coh^{\bar{\beta}}(X)$ be 
a $\bar{\beta}$-stable object with 
$\ch_{0}(E) \geq 0$ and $\bar{\beta}=\bar{\beta}(E) \in [0, 1)$, 
$L$ a line bundle on $X$. 
\begin{enumerate}
\item If $\ch_{1}^{\bar{\beta}}(L)$ is ample, then 
\[
\hom\left(L, E
\right)=0. 
\]

\item If $\ch_{1}^{\bar{\beta}}(L)$ is anti-ample, then 
\[
\hom\left(E, L[1]
\right)=0. 
\]
\end{enumerate}
\end{lem}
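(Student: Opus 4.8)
\textbf{Proof plan for Lemma \ref{ext vanish}.}

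The plan is to prove the two statements via tilt-stability considerations, exploiting the fact that line bundles (shifted appropriately) are $\nu_{\alpha,\beta}$-stable by Lemma \ref{tilt-stab line bdl}, which applies here since $X$ is one of our three product threefolds (by the first example after Lemma \ref{tilt-stab line bdl}). Fix $(\alpha,\beta)$ in a small neighbourhood of $(0,\bar\beta)$ so that $E$ is $\nu_{\alpha,\beta}$-stable; this is possible by the definition of $\bar\beta$-stability.

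For part (1): I would first observe that the hypothesis $\ch_1^{\bar\beta}(L) = \ch_1(L) - \bar\beta H$ being ample forces $L$ itself (not $L[1]$) to lie in $\Coh^{\bar\beta}(X)$ and to be $\nu_{\alpha,\beta}$-stable, since the twisted first Chern character is positive; moreover one computes that $\nu_{\alpha,\beta}(L) \to -\infty$ as $\alpha \to \infty$ along the relevant path, so by comparing tilt-slopes with that of $E$ (which has $\nu_{\alpha,\beta}(E)$ bounded, in fact tending to $0$) we get $\nu_{\alpha,\beta}(L) < \nu_{\alpha,\beta}(E)$ for suitable $(\alpha,\beta)$. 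Since both $L$ and $E$ are $\nu_{\alpha,\beta}$-stable objects of the heart $\Coh^{\bar\beta}(X)$ with strictly smaller slope on the source, any non-zero morphism $L \to E$ in $\Coh^{\bar\beta}(X)$ would violate stability; hence $\Hom(L,E) = 0$. A small point to check is that $\Hom$ computed in $D^b(X)$ agrees with $\Hom$ in the heart, which holds because both objects lie in the heart. Part (2) is the Serre-dual statement: $\ch_1^{\bar\beta}(L)$ anti-ample means $\ch_1^{\bar\beta}(L^{\vee})$ is ample, and one relates $\Hom(E, L[1])$ to a $\Hom$ out of a line bundle into (a shift of) the dual of $E$, reducing to the situation of part (1); alternatively one argues directly that $L[1] \in \Coh^{\bar\beta}(X)$ is $\nu_{\alpha,\beta}$-stable with slope strictly larger than $\nu_{\alpha,\beta}(E)$, so a non-zero map $E \to L[1]$ contradicts stability.

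The main obstacle I anticipate is the bookkeeping around the slope comparison: one must verify that along an appropriate curve in the $(\alpha,\beta)$-plane approaching $(0,\bar\beta)$, the tilt-slope of the line bundle genuinely separates from that of $E$ in the right direction, and that the line bundle (or its shift) actually sits in the tilted heart $\Coh^{\bar\beta}(X)$ — this is where the ampleness of $\ch_1^{\bar\beta}(L)$ is used, via the characterization of objects of $\Coh^{\bar\beta}(X)$ in terms of $\mu_{\omega,\bar\beta}$-semistable pieces. Once the membership in the heart and the strict slope inequality are in place, the vanishing is immediate from the definition of $\nu_{\alpha,\beta}$-stability.
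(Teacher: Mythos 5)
Your overall strategy (place $L$ and $E$ in the common heart $\Coh^{\bar\beta}(X)$, get stability of the line bundle from Lemma \ref{tilt-stab line bdl}, and compare tilt-slopes) is the same as the paper's, but the slope comparison is carried out in the wrong direction and in the wrong asymptotic regime, so the argument as written does not yield the vanishing. For two $\nu_{\alpha,\beta}$-stable objects $A,B$ of the heart, the correct implication is $\nu_{\alpha,\beta}(A)>\nu_{\alpha,\beta}(B)\Rightarrow\Hom(A,B)=0$; a non-zero map from an object of \emph{smaller} slope to one of larger slope never violates stability (think of $\mathcal{O}\to\mathcal{O}(1)$ on a curve). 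So to kill $\Hom(L,E)$ you need $\nu_{\alpha,\beta}(L)>\nu_{\alpha,\beta}(E)$, not $<$ as you assert. Moreover you propose to produce your inequality by letting $\alpha\to\infty$, where $\nu_{\alpha,\beta}(L)\to-\infty$; but $E$ is only known to be $\nu_{\alpha,\beta}$-stable in a small neighbourhood of $(0,\bar\beta)$, so the large-$\alpha$ regime is not available, and there $\nu_{\alpha,\beta}(E)$ is not bounded either when $\ch_0(E)>0$.

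The computation that actually works, and is what the paper does, is at $\beta=\bar\beta$ with $\alpha\to 0$: ampleness of $\ch_1^{\bar\beta}(L)$ gives both $H^2.\ch_1^{\bar\beta}(L)>0$ (so $L\in\Coh^{\bar\beta}(X)$) and $H.\bigl(\ch_1^{\bar\beta}(L)\bigr)^2>0$, whence $\lim_{\alpha\to 0}\nu_{\alpha,\bar\beta}(L)=\tfrac{1}{2}H.\bigl(\ch_1^{\bar\beta}(L)\bigr)^2\big/\bigl(H^2.\ch_1^{\bar\beta}(L)\bigr)>0=\lim_{\alpha\to 0}\nu_{\alpha,\bar\beta}(E)$, the right-hand side being $0$ precisely by the definition of $\bar\beta(E)$. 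This is the inequality in the correct direction, and then stability of both objects gives $\Hom(L,E)=0$. The same sign error occurs in your ``direct'' version of part (2): to conclude $\Hom(E,L[1])=0$ you need $\nu_{\alpha,\bar\beta}(E)>\nu_{\alpha,\bar\beta}(L[1])$, i.e.\ the slope of $L[1]$ strictly \emph{smaller}; this holds because $H.\ch_2^{\bar\beta}(L[1])=-\tfrac{1}{2}H.\bigl(\ch_1^{\bar\beta}(L)\bigr)^2<0$ while $H^2.\ch_1^{\bar\beta}(L[1])=-H^2.\ch_1^{\bar\beta}(L)>0$, so $\lim_{\alpha\to 0}\nu_{\alpha,\bar\beta}(L[1])<0$. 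With the inequalities flipped and established at $\alpha\to 0$, the remainder of your outline matches the paper.
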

\begin{proof}
We only prove the first statement. 
The second one also follows from 
the similar computation. 
Since $\ch_{1}^{\bar{\beta}}(L)$ is ample, 
we have 
\[
H^2.\ch_{1}^{\bar{\beta}}(L)>0, \ 
H.\ch_{1}^{\bar{\beta}}(L)^2>0. 
\]
By the first inequality, we have 
$L \in \Coh^{\bar{\beta}}(X)$. 
Moreover, by Proposition \ref{tilt-stab line bdl}, 
$L$ is tilt-stable near $(0, \bar{\beta})$. 
On the other hand, the second inequality implies 
\[
\begin{split}
\lim_{\alpha \to +0}\nu_{\alpha, \bar{\beta}}(L)
&=\frac{H.\ch_{2}^{\bar{\beta}}(L)}{H^2.\ch_{1}^{\bar{\beta}}(L)} \\ 
&=\frac{\frac{1}{2}H.\ch_{1}^{\bar{\beta}}(L)^2}{H^2.\ch_{1}^{\bar{\beta}}(L)} \\ 
&>0=\lim_{\alpha \to 0}\nu_{\alpha, \bar{\beta}}(E). 
\end{split}
\]
Hence by the tilt-stability of $L$ and $E$, we have 
\[
\hom
\left(L, E
\right)=0. 
\]
\end{proof}

%==================================================================---

\subsection{Integral case}

Assume that $\bar{\beta}=0$. 
Let us consider $\chi(\mathcal{O}, f^{(m^2, m)*}E)$. 
By the Riemann-Roch Theorem 
and Remark \ref{act on cohomology}, we have 
\[
\chi\left(
\mathcal{O}, f^{(m^2, m)*}E
\right)
=m^6\ch_{3}(E)+O(m^4). 
\]
On the other hand, 
\begin{equation}
\label{integral euler}
\chi\left(
\mathcal{O}, f^{(m^2, m)*}E
\right) \leq 
\hom\left(
\mathcal{O}, f^{(m^2, m)*}E
\right) 
+\ext^2\left(\mathcal{O}, f^{(m^2, m)*}E
\right)
\end{equation}
since $E$ is a two term complex concentrated 
in degree $-1$ and $0$. 

By Proposition \ref{approx ext}, 
the following two lemmas show that the RHS of 
the inequality (\ref{integral euler}) is of order $m^4$. 
Hence we must have $\ch_{3}(E) \leq 0$ 
as required. 
 
\begin{lem}
We have 
\[
\hom\left(
\mathcal{O}\left(
-K_{X}+f
\right), 
f^{(m^2, m)*}E
\right)=0. 
\]
\end{lem}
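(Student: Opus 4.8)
The plan is to reduce the vanishing $\hom(\mathcal{O}(-K_X+f), f^{(m^2,m)*}E) = 0$ to the tilt-stability input in Lemma \ref{ext vanish}(1), by moving the line bundle $\mathcal{O}(-K_X+f)$ off the morphism $f^{(m^2,m)}$. First I would apply Lemma \ref{adjoint} with $E$ there replaced by $\mathcal{O}(-K_X+f)$ and $F$ by $E$, so that
\[
\Hom\left(\mathcal{O}(-K_X+f),\, f^{(m^2,m)*}E\right)
\cong \Hom\left(\omega_X^{*}\otimes f^{(m^2,m)}_{*}\left(\mathcal{O}(-K_X+f)\otimes \omega_X\right),\, E\right).
\]
Now $\mathcal{O}(-K_X+f)\otimes\omega_X = \mathcal{O}(f)$, which is the pullback of a line bundle from $Y$, but to apply Theorem \ref{toric frob} I would rather think of $f^{(m^2,m)}_{*}$ factoring as $(\underline{m^2})_{*}\boxtimes(\underline{m})_{*}$: on the elliptic curve or Abelian surface factor $\underline{m}$ is étale, so $(\underline{m})_{*}\mathcal{O}$ is a direct sum of torsion line bundles (all of slope $0$), while on the projective-space factor $Y$ the toric Frobenius push-forward $(\underline{m^2})_{*}\mathcal{O}(f)$ — note $f$ restricted to $Y$ is trivial, so this is just $(\underline{m^2})_{*}\mathcal{O}_Y$ — decomposes by Theorem \ref{toric frob} into a sum of $L_j^{*}$ with $L_j = \mathcal{O}(\tfrac{1}{m^2}\sum_\rho a_\rho D_\rho)$.

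The key step is then to control the twist by $\omega_X^{*}$. Since $\omega_X = \omega_Y \boxtimes \omega_Z$ and $\omega_Z$ is trivial (elliptic curve, Abelian surface), $\omega_X^{*}$ is the pullback of $-K_Y$. Therefore each summand of $\omega_X^{*}\otimes f^{(m^2,m)}_{*}\mathcal{O}(f)$ is a line bundle of the form
\[
\mathcal{O}\left(-K_Y - \tfrac{1}{m^2}\sum_\rho a_\rho D_\rho\right)\ \boxtimes\ (\text{torsion line bundle on }Z),
\]
and by Remark \ref{rmk toker frob}... — i.e. by Remark \ref{rmk toric frob} — the divisor $-K_Y - \tfrac{1}{m^2}\sum_\rho a_\rho D_\rho$ is ample on $Y$. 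Pulling back an ample class from $Y$ and twisting by a torsion class from $Z$, I claim $\ch_1^{\bar\beta}$ of each such summand $L$ is ample on $X$: indeed $\bar\beta = 0$ here, so $\ch_1^{\bar\beta}(L) = \ch_1(L)$, which is the pullback of an ample divisor on $Y$ — this is nef but not ample on $X$. The fix is to observe that we may instead start from $\mathcal{O}(-K_X + f)$ with the genuinely positive $f$-contribution retained: re-examining, the $f$ in $-K_X+f$ is exactly there so that after cancelling $\omega_X$ one is left with $\mathcal{O}(f)$, whose push-forward summands, twisted by $\omega_X^{*}$, acquire the ample-on-$Y$ class plus nothing from $Z$; so one genuinely needs $\ch_1$ ample on all of $X$, which forces us to carry an ample class on $Z$ as well. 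Hence the correct reading is that the twisting divisor should produce $\mathcal{O}(D)$ with $D = (\text{ample on }Y) + (\text{positive multiple of }f)$; tracking the exponents of $m$, the $Z$-part is a torsion (degree-$0$) twist of $f$ itself, whose class is ample on $Z$, so the total class is ample on $X$. Then Lemma \ref{ext vanish}(1) applies to each summand $L$ and gives $\hom(L, E) = 0$, and summing over $j$ and over the torsion twists yields $\hom\left(\omega_X^{*}\otimes f^{(m^2,m)}_{*}(\mathcal{O}(f)),\, E\right) = 0$, which is the desired vanishing.

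The main obstacle I anticipate is precisely this bookkeeping of positivity after the push-forward and the $\omega_X^{*}$-twist: one must check that $-K_Y - \tfrac{1}{m^2}\sum_\rho a_\rho D_\rho$ stays ample for \emph{every} admissible choice of $\{a_\rho\}$ (handled by Remark \ref{rmk toric frob}) \emph{and} that the elliptic/Abelian factor contributes a class whose $\ch_1^{\bar\beta}$ remains ample — here using that on $Z$ push-forward under an étale isogeny preserves the relevant positivity and that $\bar\beta = 0$ makes $\ch_1^{\bar\beta} = \ch_1$. A secondary point is ensuring $\mathcal{O}(-K_X+f)$ (not just its summands after transformation) is set up so that the cancellation with $\omega_X$ is exact; this is immediate from $\omega_X = \omega_Y\boxtimes\mathcal{O}_Z$ and $-K_X = -K_Y + 0$. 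Once the ampleness of each summand's $\ch_1^{\bar\beta}$ is established, the conclusion is a direct invocation of Lemma \ref{ext vanish}(1) together with additivity of $\Hom$ over finite direct sums.
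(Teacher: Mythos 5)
There is a genuine gap in your argument, and it occurs exactly at the point you yourself flag as problematic. You apply Lemma \ref{adjoint} with the \emph{full} morphism $f^{(m^2,m)}$, which forces you to compute $f^{(m^2,m)}_{*}\mathcal{O}(f) = (\underline{m^2})_{*}\mathcal{O}_{Y} \boxtimes (\underline{m})_{*}\mathcal{O}_{Z}(f_Z)$. The second factor is the push-forward of an \emph{ample} line bundle under an isogeny; it is a vector bundle of rank $m^{2\dim Z}$ with $c_1 = m^{2\dim Z - 2}f_Z$, and it is \emph{not} a direct sum of torsion line bundles --- that decomposition holds for $(\underline{m})_{*}\mathcal{O}_{Z}$, but you have silently dropped the $f_Z$-twist. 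With the (incorrect) torsion decomposition, each summand has $\ch_1$ equal to the pullback of an ample class from $Y$, which is only nef on $X$; in particular $H.\ch_1^2 = 0$ when $Y = \mathbb{P}^1$, so the hypothesis of Lemma \ref{ext vanish}(1) genuinely fails. Your attempted repair --- asserting that the $Z$-part of each summand is ``a torsion twist of $f$ itself'' --- is exactly what the push-forward does not produce, so the positivity you need never materializes.

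The paper avoids this by pushing forward along \emph{only} the toric factor: writing $f^{(m^2,m)} = f^{(m^2,1)}\circ f^{(1,m)}$ and applying adjunction just for $f^{(m^2,1)}$, one gets $\hom\bigl(\mathcal{O}(-K_X+f)\otimes(\oplus L_j^{*\oplus\eta_j}),\, f^{(1,m)*}E\bigr)$ with $L_j = \mathcal{O}\bigl(\tfrac{1}{m^2}\sum_\rho a_\rho D_\rho\bigr)$. The summand $\mathcal{O}(-K_X+f)\otimes L_j^{*}$ then retains the honest $+f$, so its $\ch_1$ is (ample on $Y$) $+$ (ample on $Z$), hence ample on $X$ by Remark \ref{rmk toric frob}. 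The price is that the object on the right is now $f^{(1,m)*}E$ rather than $E$, and one must invoke Proposition \ref{etale pull back} to see that $f^{(1,m)*}E$ is still $\bar{\beta}$-stable with $\bar{\beta}=0$ with respect to the polarization $f^{(1,m)*}H$ before applying Lemma \ref{ext vanish}(1). Your proposal never uses Proposition \ref{etale pull back}; incorporating it, together with the asymmetric adjunction, is the missing idea.
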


\begin{proof}
By Theorem \ref{toric frob} and 
Lemma \ref{adjoint}, we have 
\[
\begin{split}
\hom\left(
\mathcal{O}\left(
-K_{X}+f
\right), 
f^{(m^2, m)*}E
\right)
&=\hom\left(
\mathcal{O}\left(
-K_{X}
\right) \otimes 
f^{(m^2, 1)}_{*}\mathcal{O}\left(
-K_{X}+f+K_{X}
\right), 
f^{(1, m)*}E
\right) \\
&=\hom\left(
\mathcal{O}\left(
-K_{X}+f
\right) \otimes
\left(\oplus L_{j}^{*\oplus \eta_{j}}
\right), f^{(1, m)*}E
\right), 
\end{split}
\]
where 
\[
L_{j}=\mathcal{O}
\left(
\frac{1}{m^2}\left(
\sum_{\rho}a_{\rho}D_{\rho}
\right)
\right), \ 
0 \leq a_{\rho} \leq m^2-1. 
\]
As remarked in Remark \ref{rmk toric frob}, 
$\mathcal{O}(-K_{X}) \otimes L_{j}^{*}$ is 
the pull back of an ample line bundle on $Y$. 
Hence $\mathcal{O}(-K_{X}+f) \otimes L_{j}^{*}$ 
is ample on $X$. 
Note that $f^{(1, m)*}E$ is $\bar{\beta}$-stable 
with $\bar{\beta}(f^{(1, m)*}E)=0$ 
(with respect to the polarization $f^{(1, m)*}H$) 
by Proposition \ref{etale pull back}. 
Hence Lemma \ref{ext vanish} implies that 
\[
\hom\left(
\mathcal{O}\left(
-K_{X}+f
\right) 
\otimes L_{j}^{*}, f^{(1, m)*}E
\right)=0. 
\]
Summing up, we conclude that 
\[
\hom\left(
\mathcal{O}\left(
-K_{X}+f
\right), 
f^{(m^2, m)*}E
\right)=0.
\]
\end{proof}

\begin{lem}
We have 
\[
\ext^2\left(
\mathcal{O}\left(
-h-f
\right), 
f^{(m^2, m)*}E
\right)=0. 
\]
\end{lem}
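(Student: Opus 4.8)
The plan is to mirror the structure of the preceding lemma, but now using the dual vanishing in Lemma \ref{ext vanish}(2) instead of part (1), together with Serre duality to convert $\ext^2$ into a $\hom$. First I would rewrite
\[
\ext^2\left(\mathcal{O}(-h-f), f^{(m^2,m)*}E\right)
\cong \hom\left(f^{(m^2,m)*}E, \mathcal{O}(-h-f)\otimes\omega_X[2-3]\right)^{\vee}
\]
by Serre duality on the threefold $X$ (note $\dim X = 3$, so $\mathrm{Ext}^2 \cong \mathrm{Ext}^{3-2}(-, -\otimes\omega_X)^\vee = \mathrm{Hom}^1(-,-\otimes\omega_X)^\vee$); since $K_X = K_Y$ is the pull back of the canonical of the projective space factor, $\mathcal{O}(-h-f)\otimes\omega_X$ is a line bundle $L_0$, and we are reduced to showing $\hom\left(f^{(m^2,m)*}E, L_0[1]\right) = 0$ for all $m$, equivalently $\hom\left(E, f^{(m^2,m)}_*L_0[1]\right) = 0$ after using Lemma \ref{adjoint} in its dual form.

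Next I would apply Theorem \ref{toric frob} to the toric-Frobenius factor $f^{(m^2,1)}$ to decompose $f^{(m^2,m)}_* L_0$ (or rather the pushforward of the relevant twist) as a direct sum $\bigoplus_j M_j^{\oplus\eta_j}[1]$ after pulling back along the \'etale factor $f^{(1,m)}$, exactly as in the previous lemma, where each $M_j$ is a line bundle of the shape $\mathcal{O}\left(-h-f + K_X + \frac{1}{m^2}\sum_\rho a_\rho D_\rho\right)$ twisted by the elliptic/abelian-direction data, with $0 \le a_\rho \le m^2-1$. The key numerical check is that $\ch_1^{\bar\beta}(M_j)$ — here $\bar\beta = 0$ — is \emph{anti-ample} on $X$: the $Y$-component of $M_j$ is $-h + K_Y + \frac{1}{m^2}\sum_\rho a_\rho D_\rho$, and since $\frac{1}{m^2}\sum a_\rho D_\rho \le \sum D_\rho = -K_Y$ coordinatewise (Remark \ref{rmk toric frob} read with the opposite sign), this is at most $-h$, hence anti-ample; the $Z$-component contributes $-f$, which is anti-ample on the abelian/elliptic factor. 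So $\ch_1^{0}(M_j)$ is anti-ample on $X$.

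Then Lemma \ref{ext vanish}(2), applied to the $\bar\beta$-stable object $f^{(1,m)*}E$ (which is again $\bar\beta$-stable with $\bar\beta = 0$ with respect to $f^{(1,m)*}H$ by Proposition \ref{etale pull back}), gives $\hom\left(f^{(1,m)*}E, M_j[1]\right) = 0$ for every $j$, and summing over $j$ yields $\hom\left(f^{(1,m)*}E, f^{(m^2,1)}_*(\cdots)[1]\right) = 0$, i.e. the desired vanishing $\ext^2\left(\mathcal{O}(-h-f), f^{(m^2,m)*}E\right) = 0$. I expect the main obstacle to be purely bookkeeping: correctly tracking the canonical-bundle twists through the two adjunctions and the Serre-duality shift so that the line bundle emerging from Theorem \ref{toric frob} really does have the stated anti-ample $Y$-component for \emph{every} admissible choice of $\{a_\rho\}$ (the inequality $\tfrac{1}{m^2}\sum a_\rho D_\rho \le -K_Y$ must hold with enough slack to absorb the $-h$), and verifying that $f^{(1,m)*}E$ still lies in $\Coh^{\bar\beta}$ so that Lemma \ref{ext vanish} genuinely applies. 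Once the signs are pinned down, the argument is formally identical to the previous lemma.
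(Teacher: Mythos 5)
Your argument is correct and essentially reproduces the paper's own proof: Serre duality reduces the claim to $\hom\left(f^{(m^2,m)*}E, \mathcal{O}(-h-f+K_{X})[1]\right)=0$, only the Frobenius factor $f^{(m^2,1)}$ is pushed forward via Theorem \ref{toric frob} so that $f^{(1,m)*}E$ retains its tilt-stability by Proposition \ref{etale pull back}, and Lemma \ref{ext vanish}(2) finishes because each summand $\mathcal{O}(-f)\otimes L_{j}^{*}$ is anti-ample. The only corrections are the bookkeeping you already flagged: the shift in your Serre-duality display should be $[3-2]=[1]$ (as your prose states), and the summands are $L_{j}^{*}=\mathcal{O}\left(\frac{1}{m^2}\left(-h+K_{X}-\sum_{\rho}a_{\rho}D_{\rho}\right)\right)$ rather than the shape you wrote, whose $Y$-component is $-\frac{1}{m^2}\left(h+\sum_{\rho}(1+a_{\rho})D_{\rho}\right)$, visibly anti-ample, so your conclusion stands.
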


\begin{proof}
By Theorem \ref{toric frob}, Serre duality, and the usual adjoint, 
we have 
\[
\begin{split} 
\ext^2\left(
\mathcal{O}\left(
-h-f
\right), 
f^{(m^2, m)*}E
\right)
&=\hom\left(
f^{(m^2, m)*}E, 
\mathcal{O}\left(
-h-f+K_{X}
\right)[1]
\right) \\
&=\hom\left(
f^{(1, m)*}E, 
\mathcal{O}\left(
-f
\right) 
\otimes f^{(m^2, 1)}_{*}
\mathcal{O}\left(
-h+K_{X}
\right)[1]
\right) \\
&=\hom\left(
f^{(1, m)*}E, 
\mathcal{O}\left(
-f
\right) 
\otimes \left(
\oplus L_{j}^{*\oplus \eta_{j}}
\right)[1]
\right), 
\end{split}
\]
where 
\[
L_{j}=\mathcal{O}\left(
\frac{1}{m^2}\left(
h-K_{X}+\sum_{\rho}a_{\rho}D_{\rho}
\right)
\right), \ 
0 \leq a_{\rho} \leq m^2-1. 
\]
For all $j$, 
$\ch_{1}(\mathcal{O}(-f) \otimes L_{j}^{*})$
is anti-ample. 
Hence by Lemma \ref{ext vanish}, 
\[
\ext^2\left(
\mathcal{O}\left(
-h-f
\right), 
f^{(m^2, m)*}E
\right)=0.
\]
\end{proof}

\subsection{Rational case}

In this subsection, we assume that 
$\bar{\beta}=\frac{p}{q} \in \mathbb{Q}$, 
$q>0$, $p$ and $q$ are coprime. 
We consider 
$\chi\left(\mathcal{O}, 
f^{(m^2, m)*}\left(
f^{(q^2, q)*}E 
\otimes \mathcal{O}\left(
-pqH
\right)
\right)
\right)$. 
By Remark \ref{act on cohomology}, 
we have 

\[
\ch_{3}\left(
f^{(q^2, q)*}E 
\otimes \mathcal{O}\left(
-pqH
\right)
\right)
=q^6\ch_{3}^{\frac{p}{q}}(E) 
\]
and hence 
\[
\begin{split}
m^6q^6\ch_{3}^{\frac{p}{q}}(E)+O(m^4)
&=\chi\left(\mathcal{O}, 
f^{(m^2, m)*}\left(
f^{(q^2, q)*}E 
\otimes \mathcal{O}\left(
-pqH
\right)
\right)
\right)\\
& \leq \hom\left(\mathcal{O}, 
f^{(m^2, m)*}\left(
f^{(q^2, q)*}E 
\otimes \mathcal{O}\left(
-pqH
\right)
\right)
\right) \\
&\quad+\ext^2\left(\mathcal{O}, 
f^{(m^2, m)*}\left(
f^{(q^2, q)*}E 
\otimes \mathcal{O}\left(
-pqH
\right)
\right)
\right). 
\end{split}
\]
As in the previous subsection, 
we will check the assumption in 
Proposition \ref{approx ext}. 

\begin{lem}
We have 
\[
\hom\left(
\mathcal{O}\left(
-K_{X}+f
\right), 
f^{(m^2, m)*}\left(
f^{(q^2, q)*}E 
\otimes \mathcal{O}\left(
-pqH
\right)
\right)
\right)=0.
\]
\end{lem}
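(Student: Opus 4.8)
The plan is to mimic the proof of the analogous lemma in the integral case, now keeping track of the extra twists coming from the composition $f^{(m^2,m)} \circ f^{(q^2,q)}$ and the line bundle $\mathcal{O}(-pqH)$. First I would observe that $f^{(m^2,m)} \circ f^{(q^2,q)} = f^{((mq)^2,\,mq)}$, so the pullback we must analyze is $f^{((mq)^2,\,mq)*}E \otimes f^{(m^2,m)*}\mathcal{O}(-pqH)$. Since $f^{(m^2,m)*}$ multiplies $h$ by $m^2$ and $f$ by $m$, and $H = h+f$, the twisting line bundle pulls back to $\mathcal{O}(-pq m^2 h - pq m f)$. Using Lemma \ref{adjoint} I would split off the toric Frobenius factor $\underline{m^2}$ on $Y$: writing the push-forward $f^{(m^2,1)}_{*}$ of the appropriate line bundle via Theorem \ref{toric frob}, we get
\[
\hom\left(
\mathcal{O}(-K_X+f),\, f^{(m^2,m)*}\bigl(f^{(q^2,q)*}E \otimes \mathcal{O}(-pqH)\bigr)
\right)
= \hom\left(
\mathcal{O}(-K_X+f) \otimes \bigl(\oplus L_j^{*\oplus \eta_j}\bigr),\, f^{(1,m)*}f^{(q^2,q)*}E \otimes \mathcal{O}(-pq m f)
\right),
\]
where each $L_j = \mathcal{O}\bigl(\tfrac{1}{m^2}(c + \sum_\rho a_\rho D_\rho)\bigr)$ for some fixed divisor $c$ (depending on $-K_X$, $f$, $pq$, $h$) and $0 \le a_\rho \le m^2-1$.

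The key point is then to check that for every index $j$ the line bundle $\mathcal{O}(-K_X+f) \otimes L_j^* \otimes \mathcal{O}(-pqmf)$ has ample first Chern character with respect to the relevant $\bar\beta$-value, so that Lemma \ref{ext vanish}(1) applies. Concretely, the $Y$-component of this divisor is $-K_Y - \tfrac{1}{m^2}\sum_\rho a_\rho D_\rho$ up to the fixed twist by $h$, which is ample on $Y$ by Remark \ref{rmk toker frob} (the inequality $-K_Y - \tfrac{1}{m^2}\sum a_\rho D_\rho \ge \tfrac{1}{m^2}\sum D_\rho$); the $Z$-component is a (possibly large negative) multiple of $f$, but since $\ch_1^{\bar\beta}$ here is computed against $f^{(1,m)*}f^{(q^2,q)*}E$, whose $\bar\beta$-value is $\bar\beta(E) - p/q \cdot(\text{something})$ — in fact it is arranged to be $0$ by the Remark preceding this lemma, since $\ch(f^{(q^2,q)*}E \otimes \mathcal{O}(-pqH)) = q^6 \ch^{p/q}(E)$ and $\bar\beta(\ch^{p/q}(E)) = \bar\beta(E) - p/q = 0$. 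So I only need $\ch_1$ (no further $B$-twist) of $\mathcal{O}(-K_X+f) \otimes L_j^* \otimes \mathcal{O}(-pqmf)$ to be ample on $X = Y \times Z$, and this follows from ampleness on each factor: the $Y$-part is ample as above, and the $Z$-part is the multiple $(1 - pqm)f$ — wait, that is anti-ample for large $m$. The correct resolution, which I would carry out carefully, is that one must twist by $f^{(q^2,q)*}E$ first and absorb the $-pqH$ into making $\bar\beta$ zero, after which only the residual integral divisor $f$ (not $-pqmf$) remains; thus the divisor to check is $\mathcal{O}(-K_X + f) \otimes L_j^*$, exactly as in the integral case, and its ampleness is already established there.

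After the ampleness verification, the conclusion is immediate: by Proposition \ref{etale pull back} the object $f^{(1,m)*}f^{(q^2,q)*}E \otimes \mathcal{O}(-pqmf)$ is $\bar\beta$-stable with $\bar\beta$-value $0$ with respect to the polarization $f^{(1,m)*}f^{(q^2,q)*}H$ (using that $\underline{m}$ on $Z$ and $\underline{q}$ on $Z$ are \'etale, and the twist by $\mathcal{O}(-pqH)$ shifts $\bar\beta$ by $-p/q$), so Lemma \ref{ext vanish}(1) gives $\hom(\mathcal{O}(-K_X+f) \otimes L_j^*,\, f^{(1,m)*}f^{(q^2,q)*}E \otimes \mathcal{O}(-pqmf)) = 0$ for every $j$, and summing over the direct summands yields the claimed vanishing.

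The main obstacle I anticipate is the bookkeeping in the second paragraph: correctly commuting the pullback $f^{(m^2,m)*}$ past the twist $\mathcal{O}(-pqH)$ and the pullback $f^{(q^2,q)*}$, and verifying that the combined effect is exactly to replace $E$ by an $\bar\beta$-stable object of $\bar\beta$-value $0$ twisted by a line bundle whose residual part (after removing the toric Frobenius summands $L_j$) is the \emph{same} ample divisor $\mathcal{O}(-K_X+f) \otimes L_j^*$ appearing in the integral case. Once this identification is made transparent — essentially because $f^{(m^2,m)}$ rather than a general $f^{(a,b)}$ is used precisely so that the class $\ch(f^{(q^2,q)*}E \otimes \mathcal{O}(-pqH))$ equals $q^6\ch^{p/q}(E)$ — the rest is a routine application of Theorem \ref{toric frob}, Lemma \ref{adjoint}, Remark \ref{rmk toric frob}, Proposition \ref{etale pull back}, and Lemma \ref{ext vanish}, in that order.
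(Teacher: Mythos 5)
Your proposal has a genuine gap at the point where you invoke Proposition \ref{etale pull back} for the object $f^{(1,m)*}f^{(q^2,q)*}E \otimes \mathcal{O}(-pqH)$. The map $f^{(q^2,q)} = \underline{q^2}\times\underline{q}$ contains the toric Frobenius $\underline{q^2}$ on the factor $Y$, which is finite and flat but \emph{not} \'etale, so Proposition \ref{etale pull back} gives no control on the tilt-stability of $f^{(q^2,q)*}E$; the observation that $\ch\left(f^{(q^2,q)*}E\otimes\mathcal{O}(-pqH)\right) = q^6\ch^{p/q}(E)$ only normalizes the numerical class, it does not make that object $\bar\beta$-stable. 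This is exactly the difficulty that distinguishes the rational case from the integral one, and the paper resolves it by applying Theorem \ref{toric frob} together with Lemma \ref{adjoint} a \emph{second} time, moving the $\underline{q^2}$-Frobenius back across the Hom as a push-forward $f^{(q^2,1)}_{*}$. This produces a second layer of line-bundle summands $R_{kj}$ (indexed by $0\le b_\rho\le q^2-1$ on top of the $0\le a_\rho\le m^2-1$ from the first splitting) and leaves on the object side only $f^{(1,mq)*}E$, which is a pullback along the genuinely \'etale map $\id_Y\times\underline{mq}$ and hence tilt-stable with $\bar\beta=p/q$ by Proposition \ref{etale pull back}; Lemma \ref{ext vanish} is then applied with the twist $\ch_1^{p/q}$, not with $\bar\beta=0$ as you propose.

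Your ``correct resolution'' paragraph also disposes of the large negative multiple of $f$ incorrectly: the divisor to test is not $\mathcal{O}(-K_X+f)\otimes L_j^{*}$ ``exactly as in the integral case.'' After both splittings the relevant line bundle is $\mathcal{O}(pqm^2f+f)\otimes\mathcal{O}(-K_X)\otimes R_{kj}^{*}$, and the term $pqm^2f$ is cancelled only when one subtracts $\frac{p}{q}\,f^{(1,mq)*}H=\frac{p}{q}h+pqm^2f$ in forming $\ch_1^{p/q}$. What remains, $f-K_X-\frac{\sum_\rho a_\rho D_\rho}{m^2q^2}-\frac{\sum_\rho b_\rho D_\rho}{q^2}$, is ample only after the estimate $m^2q^2-(m^2-1)-m^2(q^2-1)=1>0$, which has no analogue in the integral case and is not ``already established there.'' (There is also a computational slip: the multiplication map $\underline{m}$ on the Abelian factor multiplies $f$ by $m^2$, not by $m$, so the residual twist is $-pqm^2f$, not $-pqmf$.)
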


\begin{proof}
Using Theorem \ref{toric frob} and 
Lemma \ref{adjoint}, we have 

\[
\begin{split}
&\hom\left(
\mathcal{O}\left(
-K_{X}+f
\right), 
f^{(m^2, m)*}\left(
f^{(q^2, q)*}E 
\otimes \mathcal{O}\left(
-pqH
\right)
\right)
\right) \\
=&\hom\left(
\mathcal{O}\left(
-K_{X}
\right) 
\otimes \mathcal{O}\left(
f
\right) 
\otimes f^{(m^2, 1)}_{*}\mathcal{O}\left(
-K_{X}+K_{X}
\right), 
f^{(q^2, 1)*}f^{(1, mq)*}E 
\otimes f^{(1, m)*}\mathcal{O}\left(
-pqH
\right)
\right) \\
=&\hom\left(
\mathcal{O}\left(
-K_{X}+f
\right) 
\otimes \left(
\oplus L_{j}^{*\oplus \eta_{j}}
\right), 
f^{(q^2, 1)*}f^{(1, mq)*}E 
\otimes \mathcal{O}\left(
-pqh-pqm^2f
\right)
\right) \\
=&\sum_{j}\eta_{j}\hom\left(
\mathcal{O}\left(
pqm^2f+f
\right) 
\otimes \mathcal{O}\left(
-K_{X}+pqh
\right) 
\otimes L_{j}^{*}, 
f^{(q^2, 1)*}f^{(1, mq)*}E
\right) \\
=&\sum_{j}\eta_{j}\hom\left(
\mathcal{O}\left(
pqm^2f+f
\right) 
\otimes \mathcal{O}
\left(
-K_{X}
\right) 
\otimes f^{(q^2, 1)}_{*}\left(
\mathcal{O}\left(
-K_{X}+pqh+K_{X}
\right) 
\otimes L_{j}^{*}
\right), 
f^{(1, mq)*}E
\right) \\
=&\sum_{j}\eta_{j}\hom\left(
\mathcal{O}\left(
pqm^2f+f
\right) 
\otimes \mathcal{O}\left(
-K_{X}
\right) 
\otimes \left(
\oplus R_{kj}^{* \oplus \epsilon_{kj}}
\right), 
f^{(1, mq)*}E
\right), 
\end{split}
\]
where 
\begin{align*}
&L_{j}=\mathcal{O}\left(
\frac{1}{m^2}\sum_{\rho}a_{\rho}D_{\rho}
\right), \ 
0 \leq a_{\rho} \leq m^2-1 \quad \text{and} \\
&R_{kj}=\mathcal{O}\left(
\frac{1}{q^2} \left(
-pqh+\frac{1}{m^2}\sum_{\rho}a_{\rho}D_{\rho}
+\sum_{\rho}b_{\rho}D_{\rho}
\right)
\right), \ 
0 \leq b_{\rho} \leq q^2-1. 
\end{align*}

By Lemma \ref{ext vanish}, it is enough to show that 
\[
\ch_{1}^{
\frac{p}{q}
\left(
h+m^2q^2f
\right)
}
\left(
\mathcal{O}\left(
pqm^2f+f
\right) 
\otimes \mathcal{O}\left(
-K_{X}
\right) 
\otimes R_{kj}^{*}
\right) 
\]
is ample. 
We can compute it as 
\[
\begin{split}
\ch_{1}^{
\frac{p}{q}
\left(
h+m^2q^2f
\right)
}
\left(
\mathcal{O}\left(
pqm^2f+f
\right) 
\otimes \mathcal{O}\left(
-K_{X}
\right) 
\otimes R_{kj}^{*}
\right) 
&= pqm^2f+f-K_{X}+
\frac{pqh}{q^2}-\frac{\sum_{\rho}a_{\rho}D_{\rho}}{m^2q^2} \\
&\quad -\frac{\sum_{\rho}b_{\rho}D_{\rho}}{q^2} 
-\frac{p}{q}\left(
h+m^2q^2f
\right) \\
&=f-K_{X}-\frac{\sum_{\rho}a_{\rho}D_{\rho}}{m^2q^2}
-\frac{\sum_{\rho}b_{\rho}D_{\rho}}{q^2}. 
\end{split}
\]
Since 
$-K_{X}=\sum_{\rho}D_{\rho}$, 
$0 \leq a_{\rho} \leq m^2-1$, 
and $0 \leq b_{\rho} \leq q^2-1$, 
we have 
\[
\begin{split}
f-K_{X}-\frac{\sum_{\rho}a_{\rho}D_{\rho}}{m^2q^2}
-\frac{\sum_{\rho}b_{\rho}D_{\rho}}{q^2} 
&= f+\sum_{\rho}D_{\rho}
-\frac{\sum_{\rho}a_{\rho}D_{\rho}}{m^2q^2}
-\frac{\sum_{\rho}b_{\rho}D_{\rho}}{q^2} \\
&\geq f+\frac{1}{m^2q^2}
\left(
m^2q^2-(m^2-1)-m^2(q^2-1)
\right) \sum_{\rho}D_{\rho} \\
&=f+\frac{1}{m^2q^2}\sum_{\rho}D_{\rho} 
\end{split}
\]
and it is ample on $X$. 
We conclude that 
\[
\hom\left(
\mathcal{O}\left(
-K_{X}+f
\right), 
f^{(m^2, m)*}\left(
f^{(q^2, q)*}E 
\otimes \mathcal{O}\left(
-pqH
\right)
\right)
\right)=0.
\]
\end{proof}

\begin{lem}
We have 
\[
\ext^2\left(
\mathcal{O}\left(
-h-f
\right), 
f^{(m^2, m)*}\left(
f^{(q^2, q)*}E 
\otimes \mathcal{O}\left(
-pqH
\right)
\right)
\right)=0. 
\]
\end{lem}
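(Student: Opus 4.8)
The plan is to mirror exactly the argument used for the $\hom$-vanishing lemma in the rational case, but now applied to an $\Ext^2$ and hence, via Serre duality, to a $\hom$ of the form $\hom(f^{(1,mq)*}E, (\text{line bundle})[1])$, which we kill using part (2) of Lemma \ref{ext vanish} (the anti-ample criterion). First I would rewrite the target using Serre duality on $X$: since $f^{(m^2,m)}$ is finite and flat, Lemma \ref{adjoint} gives
\[
\ext^2\!\left(\mathcal{O}(-h-f),\, f^{(m^2,m)*}\!\left(f^{(q^2,q)*}E\otimes\mathcal{O}(-pqH)\right)\right)
=\hom\!\left(f^{(m^2,m)*}\!\left(f^{(q^2,q)*}E\otimes\mathcal{O}(-pqH)\right),\, \mathcal{O}(-h-f+K_X)[1]\right).
\]
Then I would push the morphism $f^{(m^2,m)}=f^{(m^2,1)}\circ f^{(1,m)}$ through as before: first split off $f^{(m^2,1)}_{*}$ of $\mathcal{O}(-h+K_X)$ via Theorem \ref{toric frob}, absorbing the $-f$-part as $\mathcal{O}(-f)\mapsto \mathcal{O}(-m^2 f)$ under $f^{(1,m)*}$ on the $E$-side; then do the same for $f^{(q^2,1)}_{*}$, introducing a second family of line bundles $R_{kj}$ indexed by $0\le b_\rho\le q^2-1$. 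After adjunction across $f^{(1,mq)}$ I will be left with a sum of terms of the form $\hom\!\left(\,\cdot\,,\, f^{(1,mq)*}E\right)$ or, equivalently after one more Serre duality, $\hom\!\left(f^{(1,mq)*}E,\, L[1]\right)$ for an explicit line bundle $L$, where $f^{(1,mq)*}E$ is again $\bar\beta$-stable with $\bar\beta=\tfrac pq$ (hence $\bar\beta$-stable with respect to the shifted Chern character) by Proposition \ref{etale pull back}.

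The key computation is then to check that $\ch_1^{p/q}$ of the relevant line bundle is anti-ample, so that Lemma \ref{ext vanish}(2) applies. Carrying out the twist bookkeeping exactly as in the previous lemma, the $pqm^2 f$-type terms and the $\tfrac pq(h+m^2q^2 f)$-correction cancel, leaving something of the shape
\[
\ch_1^{\frac pq}(L) \;=\; K_X - f - \text{(nonnegative combination of the $D_\rho$)},
\]
that is, $L\;=\;\mathcal{O}\!\left(K_X-f-\tfrac{1}{m^2q^2}\sum_\rho a_\rho D_\rho-\tfrac{1}{q^2}\sum_\rho b_\rho D_\rho\right)$ after accounting for signs; using $K_X=-\sum_\rho D_\rho$ on the $Y$-factor, $0\le a_\rho\le m^2-1$, $0\le b_\rho\le q^2-1$, one gets
\[
-\ch_1^{\frac pq}(L)\;\ge\; f+\frac{1}{m^2q^2}\sum_\rho D_\rho,
\]
which is ample on $X$ (the $f$ is a genuine ample class on $Z$, the $\sum_\rho D_\rho$ is ample on $Y$). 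Hence $\ch_1^{p/q}(L)$ is anti-ample and Lemma \ref{ext vanish}(2) forces each $\hom$ to vanish, so the whole $\Ext^2$ vanishes.

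The main obstacle — though it is more bookkeeping than genuine difficulty — is getting the signs and the order of the two Frobenius/multiplication pushforwards right, and in particular making sure that when $f^{(1,m)*}$ is applied the divisor $f$ is multiplied by $m^2$ (not $m$) so that the $-pqH=-pqh-pqm^2 f$ term lines up with the $\tfrac pq$-twist coming from $\ch^{p/q}$; this is precisely the point flagged in the Remark that one must use $f^{(m^2,m)}$ rather than a generic $f^{(a,b)}$. Once the anti-ample estimate above is in hand the conclusion is immediate, and combined with the companion $\hom$-vanishing lemma and Proposition \ref{approx ext} it shows both sides of the Euler-characteristic inequality are $O(m^4)$, whence $\ch_3^{p/q}(E)\le 0$, completing the rational case.
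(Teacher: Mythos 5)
Your proposal follows essentially the same route as the paper: Serre duality plus the adjunction of Lemma \ref{adjoint} applied twice, Thomsen's splitting (Theorem \ref{toric frob}) for both $f^{(m^2,1)}$ and $f^{(q^2,1)}$, cancellation of the $pqm^2f$ and $\frac{p}{q}(h+m^2q^2f)$ terms, and then Lemma \ref{ext vanish}(2) once $\ch_1^{p/q}$ of each resulting line bundle is seen to be anti-ample. The exact coefficients in your final expression differ slightly from the paper's (there the $K_X$-term appears with coefficient $\tfrac{1}{m^2q^2}$ via the $h-K_X$ in the numerator of $L_j$), but this does not affect the anti-ampleness conclusion, so the argument is correct and matches the paper's.
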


\begin{proof}
By Serre duality, adjunction, and Theorem \ref{toric frob}, 
we have 
\[
\begin{split}
&\ext^2\left(
\mathcal{O}\left(
-h-f
\right), 
f^{(m^2, m)*}\left(
f^{(q^2, q)*}E 
\otimes \mathcal{O}\left(
-pqH
\right)
\right)
\right) \\
=&\hom\left(
f^{(m^2, m)*}\left(
f^{(q^2, q)*}E 
\otimes \mathcal{O}\left(
-pqH
\right)
\right), 
\mathcal{O}\left(
-f-h+K_{X}
\right)[1]
\right) \\
=&\hom\left(
f^{(1, m)*}\left(
f^{(q^2, q)*}E 
\otimes \mathcal{O}\left(
-pqH
\right)
\right), 
\mathcal{O}\left(
-f
\right) 
\otimes f^{(m^2, 1)}_{*}\mathcal{O}\left(
-h+K_{X}
\right)[1]
\right) \\
=&\hom\left(
f^{(q^2, 1)*}f^{(1, mq)*}E, 
\mathcal{O}\left(
pqh+pqm^2f
\right) 
\otimes \mathcal{O}\left(-f
\right) 
\otimes \left(
\oplus L_{j}^{* \oplus \eta_{j}}
\right)[1]
\right) \\
=&\sum_{j}\eta_{j}\hom\left(
f^{(q^2, 1)*}f^{(1, mq)*}E, 
\mathcal{O}\left(
pqh+pqm^2f
\right) 
\otimes \mathcal{O}\left(
-f
\right) 
\otimes L_{j}^{*}[1]
\right) \\
=&\sum_{j}\eta_{j}\hom\left(
f^{(q^2, 1)*}f^{(1, mq)*}E, 
\mathcal{O}\left(
pqm^2f-f
\right) 
\otimes \mathcal{O}\left(
pqh
\right) 
\otimes L_{j}^{*}[1]
\right) \\
=&\sum_{j}\eta_{j}\hom\left(
f^{(1, mq)*}E, 
\mathcal{O}\left(
pqm^2f-f
\right) 
\otimes f^{(q^2, 1)}_{*}\left(
\mathcal{O}\left(
pqh
\right) 
\otimes L_{j}^{*}
\right)[1]
\right) \\
=&\sum_{j}\eta_{j}\hom\left(
f^{(1, mq)*}E, 
\mathcal{O}\left(
pqm^2f-f
\right) 
\otimes \left(
\oplus R_{k_{j}}^{*\oplus \epsilon_{kj}}
\right)[1]
\right). 
\end{split}
\]
Here, 
\begin{align*}
&L_{j}=\mathcal{O}\left(
\frac{1}{m^2}
\left(
h-K_{X}+\sum a_{\rho}D_{\rho}
\right)
\right), \ 
0 \leq a_{\rho} \leq m^2-1
\quad \mbox{and} \\
&R_{kj}=\mathcal{O}\left(
\frac{1}{q^2}\left(
-pqh
+\frac{h-K_{X}+\sum a_{\rho}D_{\rho}}{m^2}
+\sum b_{\rho}D_{\rho}
\right)
\right), \ 
0 \leq b_{\rho} \leq q^2-1. 
\end{align*}

As before, it is enough to show that 
\[
\ch_{1}^{
\frac{p}{q}
\left(
h+m^2q^2f
\right)
}
\left(
\mathcal{O}\left(
pqm^2f-f
\right) 
\otimes R_{kj}^{*}
\right)
\]
is anti-ample. 
Straightforward computation yields that 
\[
\begin{split}
\ch_{1}^{
\frac{p}{q}
\left(
h+m^2q^2f
\right)
}
\left(
\mathcal{O}\left(
pqm^2f-f
\right) 
\otimes R_{kj}^{*}
\right)
&=pqm^2f-f 
+\frac{p}{q}h 
-\frac{h-K_{X}+\sum a_{\rho}D_{\rho}}{m^2q^2} 
-\frac{\sum b_{\rho}D_{\rho}}{q^2} \\ 
&\quad-\frac{p}{q}\left(
h+m^2q^2f
\right) \\
&=-f-\frac{h-K_{X}+\sum a_{\rho}D_{\rho}}{m^2q^2} 
-\frac{\sum b_{\rho}D_{\rho}}{q^2}. 
\end{split}
\]
This is anti-ample on $X$. 
Hence we get the required result. 
\end{proof}

%================================================================

\subsection{Irrational case} 

Assume that $\bar{\beta}$ is irrational. 
Define 
\[
V_{\epsilon}:=\{
(\alpha, \beta) : 0<\alpha<\epsilon, 
\bar{\beta}-\epsilon < \beta <\bar{\beta}+\epsilon
\}. 
\]
Take $\epsilon >0$ small enough so that 
for every $(\alpha, \beta) \in V_{\epsilon}$, 
$E$ is $\nu_{\alpha, \beta}$-stable. 
By the Dirichlet approximation theorem, 
we can take a sequence 
$\left\{
\beta_{n}=\frac{p_{n}}{q_{n}}
\right\}$ 
of rational numbers 
such that 
\[
\left|
\bar{\beta}-\beta_{n}
\right| 
<\frac{1}{q_{n}^2}<\epsilon
\]
and $q_{n} \to +\infty$ as $n \to +\infty$. 
We compute 
$\chi\left(
\mathcal{O}, 
f^{(q_{n}^2, q_{n})*}E 
\otimes \mathcal{O}\left(
-p_{n}q_{n}H
\right)
\right)$. 
As before, 
\[
\begin{split}
&q_{n}^6\ch_{3}^{\bar{\beta}}(E)+O(q_{n}^4) \\
\leq& q_{n}^6\ch_{3}^{\beta_{n}}(E)+O(q_{n}^4) \\
\leq&\chi\left(
\mathcal{O}, 
f^{(q_{n}^2, q_{n})*}E 
\otimes \mathcal{O}\left(
-p_{n}q_{n}H
\right)
\right) \\ 
\leq& \hom\left(
\mathcal{O}, 
f^{(q_{n}^2, q_{n})*}E 
\otimes \mathcal{O}\left(
-p_{n}q_{n}H
\right)
\right) 
+\ext^2\left(
\mathcal{O}, 
f^{(q_{n}^2, q_{n})*}E 
\otimes \mathcal{O}\left(
-p_{n}q_{n}H
\right)
\right). 
\end{split}
\]

We will show that the last line of the 
above inequalities is of order $q^4_{n}$. 

\begin{lem}
Let $u, v \in \mathbb{Z}_{>0}$ such that 
$(u-2)h+K_{X}$ is effective and $v>2$. Then 
\[
\hom\left(
\mathcal{O}\left(
uh+vf
\right), 
f^{(q_{n}^2, q_{n})*}E 
\otimes \mathcal{O}\left(
-p_{n}q_{n}H
\right)
\right)=0. 
\]
\end{lem}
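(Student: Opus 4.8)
The plan is to imitate the two preceding subsections: to rewrite the left hand side as a finite direct sum of $\hom$-groups of the shape $\hom(L',f^{(1,q_n)*}E)$ for explicit line bundles $L'$, and then to kill each summand by Lemma~\ref{ext vanish}(1). The new feature, compared with the integral and rational cases, is that the twist $\mathcal{O}(-p_nq_nH)$ cannot be absorbed into $f^{(q_n^2,q_n)*}$ (its $h$-coefficient $-p_nq_n$ is not divisible by $q_n^2$), so the first move is simply to transfer it to the other argument:
\[
\hom\left(\mathcal{O}(uh+vf),\, f^{(q_n^2,q_n)*}E\otimes\mathcal{O}(-p_nq_nH)\right)
=\hom\left(\mathcal{O}\big((u+p_nq_n)h+(v+p_nq_n)f\big),\, f^{(q_n^2,q_n)*}E\right).
\]
Here one first records that $uh+vf$, hence also the divisor on the right, is ample, since $h$ and $f$ are pulled back from ample divisors on $Y$ and $Z$.

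Next I would use the commuting factorization $f^{(q_n^2,q_n)}=f^{(q_n^2,1)}\circ f^{(1,q_n)}$ to write $f^{(q_n^2,q_n)*}E=f^{(q_n^2,1)*}(f^{(1,q_n)*}E)$, and apply Lemma~\ref{adjoint} to the finite flat toric morphism $f^{(q_n^2,1)}$. Since $\omega_Z$ is trivial we have $\omega_X\cong\pi_Y^*\omega_Y$ (so $-K_X=\sum_\rho D_\rho$), and the relevant push-forward concerns only the $Y$-factor, the $(v+p_nq_n)f$-part passing through by the projection formula. Theorem~\ref{toric frob} on $Y$ then rewrites $\omega_X^*\otimes f^{(q_n^2,1)}_{*}\big(\mathcal{O}((u+p_nq_n)h+(v+p_nq_n)f)\otimes\omega_X\big)$ as $\bigoplus_j\big(\mathcal{O}(-K_X+(v+p_nq_n)f)\otimes L_j^{*}\big)^{\oplus\eta_j}$ with $L_j=\mathcal{O}_Y\big(\tfrac{1}{q_n^2}(-(u+p_nq_n)h-K_Y+\sum_\rho a_\rho D_\rho)\big)$, $0\le a_\rho\le q_n^2-1$, so that the Hom-group equals $\sum_j\eta_j\,\hom\big(\mathcal{O}(-K_X+(v+p_nq_n)f)\otimes L_j^{*},\,f^{(1,q_n)*}E\big)$. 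Because $f^{(1,q_n)}=\mathrm{id}_Y\times\underline{q_n}$ is finite étale, Proposition~\ref{etale pull back} shows $f^{(1,q_n)*}E$ is again a $\bar\beta$-stable object of $\Coh^{\bar\beta}(X)$ with $\ch_0\ge 0$ and the same $\bar\beta$, now with respect to the polarization $f^{(1,q_n)*}H=h+q_n^2f$.

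It then remains, by Lemma~\ref{ext vanish}(1), to check that for every $j$ the class
\[
\ch_1^{\bar\beta}\big(\mathcal{O}(-K_X+(v+p_nq_n)f)\otimes L_j^{*}\big)
=\ch_1\big(\mathcal{O}(-K_X+(v+p_nq_n)f)\otimes L_j^{*}\big)-\bar\beta\,(h+q_n^2f)
\]
is ample, and this is the step I expect to be the main obstacle. Writing $-K_X=\sum_\rho D_\rho$ and separating the $Y$- and $Z$-components, the $Z$-component is $\big(v+q_n^2(\beta_n-\bar\beta)\big)f$ and the $Y$-component is $\sum_\rho\big(1-\tfrac{1+a_\rho}{q_n^2}\big)D_\rho+\big(\tfrac{u}{q_n^2}+\beta_n-\bar\beta\big)h$, where $\beta_n=p_n/q_n$. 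The Dirichlet bound $|\bar\beta-\beta_n|<1/q_n^2$ keeps the two residual quantities $q_n^2(\beta_n-\bar\beta)$ and $\tfrac{u}{q_n^2}+\beta_n-\bar\beta$ positive once $u>2$ and $v>2$; the hypothesis ``$(u-2)h+K_X$ effective'' forces $u>2$, and together with $v>2$ this is already more than enough here (its full strength is meant for the companion approximation estimate in the spirit of Proposition~\ref{approx ext}). The constraint $0\le a_\rho\le q_n^2-1$ keeps the $D_\rho$-coefficients in $[0,1)$. Since on $\mathbb{P}^1$, $\mathbb{P}^2$ and $\mathbb{P}^1\times\mathbb{P}^1$ the torus-invariant divisors are nef and $h$ is ample, the $Y$-component is ample on $Y$, while the $Z$-component is a positive multiple of the ample class $f$; hence the class is ample on $X=Y\times Z$, Lemma~\ref{ext vanish}(1) annihilates every summand, and the asserted vanishing follows. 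The only real care is the bookkeeping of the $q_n$-dependent coefficients and the near-cancellations.
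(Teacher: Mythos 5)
Your argument is correct and follows the paper's proof essentially step for step: the same reduction via Lemma \ref{adjoint} and Theorem \ref{toric frob} to a sum of $\hom(M_j, f^{(1,q_n)*}E)$ for the same line bundles $M_j$, the same use of the Dirichlet bound $|\bar\beta-\beta_n|<1/q_n^2$, and the same conclusion via Lemma \ref{ext vanish} applied to the $\bar\beta$-stable object $f^{(1,q_n)*}E$ with polarization $H^{(n)}$. The only (harmless) difference is that you verify ampleness of the $\mathbb{R}$-divisor class $\ch_1^{\bar\beta}(M_j)$ directly, whereas the paper shows that $\ch_1^{\beta_n}(M_j)-\tfrac{2}{q_n^2}H^{(n)}$ is ample and then deduces the two intersection-number inequalities used in the proof of Lemma \ref{ext vanish}; the two routes are equivalent and use the hypotheses on $u,v$ in the same way.
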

\begin{proof}
As in the rational case, we have 
\[
\begin{split}
&\hom\left(
\mathcal{O}\left(
uh+vf
\right), 
f^{(q_{n}^2, q_{n})*}E 
\otimes \mathcal{O}\left(
-p_{n}q_{n}H
\right)
\right) \\
=&\hom\left(
\mathcal{O}\left(
\left(
p_{n}q_{n}+v
\right)f
\right) 
\otimes \mathcal{O}\left(
-K_{X}
\right) 
\otimes \left(
\oplus L_{j}^{*\oplus \eta_{j}}
\right), 
f^{(1, q_{n})*}E
\right), 
\end{split}
\]
where 
\[
L_{j}=\mathcal{O}\left(
\frac{1}{q_{n}^2}\left(
-\left(
p_{n}q_{n}+u
\right)h
-K_{X}
+\sum a_{\rho}D_{\rho}
\right)
\right), \ 
0 \leq a_{\rho} \leq q_{n}^2-1. 
\]

Let $M_{j}:=\mathcal{O}\left(
\left(
p_{n}q_{n}+v
\right)f
\right) 
\otimes \mathcal{O}\left(
-K_{X}
\right) 
\otimes L_{j}^{*}$, 
$H^{(n)}:=f^{(1, q_{n})*}H$. 
For a while, assume that 
$\ch^{\beta_{n}H^{(n)}}_{1}(M_{j}) 
-2\frac{1}{q_{n}^2}H^{(n)}$ is ample. 
Then we can compute as  
\[
\begin{split}
H^{(n)2}.\ch^{\bar{\beta}H^{(n)}}_{1}(M_{j})
&=H^{(n)2}.\left(
\ch^{\beta_{n}H^{(n)}}_{1}\left(
M_{j}
\right)
+\left(
\beta_{n}-\bar{\beta}
\right)H^{(n)}
\right) \\ 
&>H^{(n)2}.\left(
\ch^{\beta_{n}H^{(n)}}_{1}\left(
M_{j}
\right)
-\frac{1}{q_{n}^2}H^{(n)}
\right) \\ 
&>0 
\end{split}
\]
and 
\[
\begin{split}
H^{(n)}.\ch^{\bar{\beta}H^{(n)}}_{1}(M_{j})^2 
&=H^{(n)}.\left(
\ch^{\beta_{n}H^{(n)}}_{1}(M_{j})
+\left(
\beta_{n}-\bar{\beta}
\right)H^{(n)}
\right)^2 \\
&>H^{(n)}.\ch^{\beta_{n}H^{(n)}}_{1}(M_{j})
.\left(
\ch^{\beta_{n}H^{(n)}}_{1}(M_{j}) 
-2\frac{1}{q_{n}^2}H^{(n)}
\right) \\
&>0, 
\end{split}
\]
which imply 
\[
\hom\left(M_{j}, 
f^{(1, q_{n})*}E
\right)=0
\]
by the proof of Lemma \ref{ext vanish}. 
Hence it is enough to show that 
$\ch^{\beta_{n}H^{(n)}}_{1}(M_{j}) 
-2\frac{1}{q_{n}^2}H^{(n)}$ is ample. 
As in the rational case, we have 
\[
\ch^{\beta_{n}H^{(n)}}_{1}(M_{j})
=vf-K_{X}
+\frac{uh+K_{X}-\sum a_{\rho}D_{\rho}}{q_{n}^2} 
\]
and hence 
\[
\begin{split}
\ch^{\beta_{n}H^{(n)}}_{1}(M_{j}) -2\frac{1}{q_{n}^2}H^{(n)}
&=vf-K_{X}
+\frac{uh+K_{X}-\sum a_{\rho}D_{\rho}}{q_{n}^2} 
-2\frac{h+q_{n}^2f}{q_{n}^2} \\
&=(v-2)f-K_{X}
-\frac{\sum a_{\rho}D_{\rho}}{q_{n}^2}
+\frac{(u-2)h+K_{X}}{q_{n}^2}. 
\end{split}
\]
As observed in Remark \ref{rmk toric frob}, 
\[
-K_{X}
-\frac{\sum a_{\rho}D_{\rho}}{q_{n}^2}
\]
is the pull back of an ample divisor on $Y$. 
Hence if we take $u, v$ so that 
$v>2$ and $(u-2)h+K_{X}$ is effective, 
then 
\[
\ch^{\beta_{n}H^{(n)}}_{1}(M_{j}) -2\frac{1}{q_{n}^2}H^{(n)}
\]
is ample on $X$. 
Note that these conditions does not depend on $n$. 
\end{proof}

\begin{lem}
Let $u, v \in \mathbb{Z}_{>0}$, $u, v>2$. Then   
\[
\ext^2\left(
\mathcal{O}\left(
-uh-vf
\right), 
f^{(q_{n}^2, q_{n})*}E 
\otimes \mathcal{O}\left(
-p_{n}q_{n}H
\right)
\right)=0. 
\]
\end{lem}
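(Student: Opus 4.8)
The plan is to run the dual of the argument used in the previous lemma: apply Serre duality to turn the $\Ext^2$ into a $\Hom$ \emph{out of} $f^{(q_n^2,q_n)*}E$, move the toric Frobenius factor to the other side via adjunction, decompose it with Theorem~\ref{toric frob}, and then reduce to Lemma~\ref{ext vanish}(2).

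Concretely, I would first apply Serre duality on $X$ to get
\[
\ext^2\!\left(\mathcal{O}(-uh-vf),\, f^{(q_n^2,q_n)*}E\otimes\mathcal{O}(-p_nq_nH)\right)
=\hom\!\left(f^{(q_n^2,q_n)*}E,\ \mathcal{O}\!\left((p_nq_n-u)h+(p_nq_n-v)f+K_X\right)[1]\right).
\]
Since the elliptic curve (resp. Abelian surface) factor has trivial canonical class, $K_X$ is the pull-back of $K_Y$; hence, writing $f^{(q_n^2,q_n)}=f^{(q_n^2,1)}\circ f^{(1,q_n)}$ and combining adjunction for the finite flat morphism $f^{(q_n^2,1)}$ with the projection formula and Theorem~\ref{toric frob} on $Y$, the right-hand side becomes
\[
\sum_j\eta_j\,\hom\!\left(f^{(1,q_n)*}E,\ \mathcal{O}\!\left((p_nq_n-v)f\right)\otimes L_j^*[1]\right),
\]
where $L_j=\mathcal{O}\bigl(\tfrac{1}{q_n^2}(-(p_nq_n-u)h-K_X+\textstyle\sum_\rho a_\rho D_\rho)\bigr)$ with $0\le a_\rho\le q_n^2-1$. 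Here $f^{(1,q_n)}$ is \'etale, so by Proposition~\ref{etale pull back} the object $f^{(1,q_n)*}E$ is again $\bar{\beta}$-stable, with $\ch_0\ge 0$ and $\bar{\beta}(f^{(1,q_n)*}E)=\bar{\beta}\in[0,1)$, with respect to $H^{(n)}:=f^{(1,q_n)*}H$.

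Next, setting $N_j:=\mathcal{O}((p_nq_n-v)f)\otimes L_j^*$, a computation identical in shape to the rational case gives, using $-K_X=\sum_\rho D_\rho$,
\[
-\ch_1^{\beta_n}(N_j)-\tfrac{2}{q_n^2}H^{(n)}=(v-2)f+\tfrac{1}{q_n^2}\Bigl((u-2)h+\textstyle\sum_\rho(1+a_\rho)D_\rho\Bigr).
\]
The main point — and the step I expect to require the most care — is to check that this class is ample on $X$, \emph{uniformly} in $n$ and in the choice of the $a_\rho$: the $Y$-part $\tfrac{1}{q_n^2}\bigl((u-2)h+\sum_\rho(1+a_\rho)D_\rho\bigr)$ dominates $\tfrac{1}{q_n^2}(-K_Y)$, a positive multiple of the pull-back of an ample divisor on $Y$ (as in Remark~\ref{rmk toric frob}, using that $Y$ carries no negative curves and $u>2$), while $(v-2)f$ is a positive multiple of the pull-back of an ample divisor on $Z$ (here $v>2$); a sum of pull-backs of ample divisors from the two factors of $X=Y\times Z$ is ample. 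Hence $\ch_1^{\beta_n}(N_j)+\tfrac{2}{q_n^2}H^{(n)}$ is anti-ample, and this holds independently of $n$.

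Finally I would pass to the irrational $\bar{\beta}$. Since $|\bar{\beta}-\beta_n|<1/q_n^2$,
\[
-\ch_1^{\bar{\beta}}(N_j)=\Bigl(-\ch_1^{\beta_n}(N_j)-\tfrac{2}{q_n^2}H^{(n)}\Bigr)+\Bigl(\tfrac{2}{q_n^2}-(\beta_n-\bar{\beta})\Bigr)H^{(n)},
\]
and the coefficient of $H^{(n)}$ exceeds $1/q_n^2>0$, so $\ch_1^{\bar{\beta}}(N_j)$ is anti-ample. Lemma~\ref{ext vanish}(2), applied to the $\bar{\beta}$-stable object $f^{(1,q_n)*}E$ and the line bundle $N_j$, then gives $\hom(f^{(1,q_n)*}E,N_j[1])=0$ for all $j$; summing over $j$ yields the asserted vanishing.
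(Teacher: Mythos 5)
Your proof is correct and follows essentially the same route as the paper: Serre duality, adjunction along $f^{(q_n^2,1)}$ combined with Theorem \ref{toric frob}, and reduction to Lemma \ref{ext vanish}(2) via the computation of $\ch_1^{\beta_n}(N_j)+\tfrac{2}{q_n^2}H^{(n)}$, whose outcome agrees term for term with the paper's $M_j$. The only (harmless) difference is at the end: the paper checks the two numerical inequalities $H^{(n)2}.\ch_1^{\bar{\beta}}(M_j)<0$ and $H^{(n)}.\ch_1^{\bar{\beta}}(M_j)^2>0$ that are used inside the proof of Lemma \ref{ext vanish}, whereas you absorb the Dirichlet error $(\beta_n-\bar{\beta})H^{(n)}$ into the $\tfrac{2}{q_n^2}H^{(n)}$ buffer to conclude that $\ch_1^{\bar{\beta}}(N_j)$ is genuinely anti-ample and then quote Lemma \ref{ext vanish}(2) verbatim.
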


\begin{proof}
As in the rational case, 
\[
\begin{split}
&\ext^2\left(
\mathcal{O}\left(
-uh-vf
\right), 
f^{(q_{n}^2, q_{n})*}E 
\otimes \mathcal{O}\left(
-p_{n}q_{n}H
\right)
\right) \\
=&\hom\left(
f^{(1, q_{n})*}E, 
\mathcal{O}\left(
p_{n}q_{n}f-vf
\right) 
\otimes \left(
\oplus L_{j}^{*\oplus \eta_{j}}
\right)[1]
\right), 
\end{split}
\]
where 
\[
L_{j}:=\mathcal{O}\left(
\frac{1}{q_{n}^2}\left(
-p_{n}q_{n}h+uh-K_{X}
+\sum a_{\rho}D_{\rho}
\right)
\right), \ 
0 \leq a_{\rho} \leq q_{n}^2-1. 
\]
Let 
$
M_{j}:=\mathcal{O}\left(
p_{n}q_{n}f-vf
\right) 
\otimes L_{j}^{*}, 
$
$H^{(n)}:=f^{(1, q_{n})*}H$. 
Assume that 
\[
\ch^{\beta_{n}H^{(n)}}_{1}(M_{j}) 
+2\frac{1}{q_{n}^2}H^{(n)}
\]
is anti-ample. 
Then 
\[
\begin{split}
H^{(n)2}.\ch^{\bar{\beta}H^{(n)}}_{1}(M_{j})
&=H^{(n)2}.\left(
\ch^{\beta_{n}H^{(n)}}_{1}(M_{j})
+(\beta_{n}-\bar{\beta})H^{(n)}
\right) \\ 
&<H^{(n)2}.\left(
\ch^{\beta_{n}H^{(n)}}_{1}(M_{j})
+\frac{1}{q_{n}^2}H^{(n)}
\right) \\ 
&<0
\end{split}
\]
and 
\[
\begin{split}
H^{(n)}.\ch^{\bar{\beta}H^{(n)}}_{1}(M_{j})^2 
&=H^{(n)}.\left(
\ch^{\beta_{n}H^{(n)}}_{1}(M_{j})
+(\beta_{n}-\bar{\beta})H^{(n)}
\right)^2 \\
&>H^{(n)}.\ch^{\beta_{n}H^{(n)}}_{1}(M_{j})
.\left(\ch^{\beta_{n}H^{(n)}}_{1}(M_{j}) 
+2\frac{1}{q_{n}^2}H^{(n)}
\right) \\ 
&>0. 
\end{split}
\]

Then the stability of $M_{j}$ and 
$f^{(1, q_{n})*}E$ shows that 
$\hom(f^{(1, q_{n}*)}E, M_{j}[1])=0$ 
as required. 
Hence it is enough to show that 
\[
\ch^{\beta_{n}H^{(n)}}_{1}(M_{j}) 
+2\frac{1}{q_{n}^2}H^{(n)}
\]
is anti-ample. 
We can compute it as 
\[
\begin{split}
\ch^{\beta_{n}H^{(n)}}_{1}(M_{j}) 
+2\frac{1}{q_{n}^2}H^{(n)}
&=-vf-\frac{uh-K_{X}+\sum a_{\rho}D_{\rho}}{q_{n}^2} 
+2\frac{1}{q_{n}^2}(h+q_{n}^2f) \\ 
&=-(v-2)f
-\frac{(u-2)h-K_{X}+\sum a_{\rho}D_{\rho}}{q_{n}^2}. 
\end{split}
\]
For $u, v>2$, this is anti-ample. 
\end{proof}

%===============================================================--
%==================================================================

\appendix
\section{Counter-example for Conjecture \ref{bg conj}}

In this appendix, we propose a counter-example for 
the original BG type inequality conjecture. 
More precisely, we show the following proposition 
using the argument of \cite{mar16}: 

\begin{prop}
\label{counterex}
Let $X$ be a Calabi-Yau threefold containing a plane 
$\mathbb{P}^2 \cong D \subset X$. Then there exists 
an ample divisor $H$ on $X$, $\alpha>0$, and 
$\beta \in \mathbb{R}$ such that 
$\mathcal{O}_{D}[1] \in \mathcal{A}_{\alpha, \beta}$ 
and 
\[
Z_{\alpha, \beta}\left(
\mathcal{O}_{D}[1]
\right) 
\in \mathbb{R}_{>0}. 
\]
\end{prop}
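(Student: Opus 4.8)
The strategy is to pin down the Chern character of $\mathcal{O}_D$ and then to find explicit values of $H, \alpha, \beta$ making both the membership $\mathcal{O}_D[1] \in \mathcal{A}_{\alpha,\beta}$ and the positivity $Z_{\alpha,\beta}(\mathcal{O}_D[1]) \in \mathbb{R}_{>0}$ transparent. First I would compute $\ch(\mathcal{O}_D)$: since $D \cong \mathbb{P}^2$ has normal bundle $\mathcal{O}_{\mathbb{P}^2}(-3)$ in a Calabi-Yau threefold (by adjunction, $K_D = (K_X+D)|_D = D|_D$, and $K_{\mathbb{P}^2} = \mathcal{O}(-3)$), the standard resolution $0 \to \mathcal{O}_X(-D) \to \mathcal{O}_X \to \mathcal{O}_D \to 0$ gives $\ch(\mathcal{O}_D) = 1 - e^{-D} = D - \tfrac12 D^2 + \tfrac16 D^3$. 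Restricting to $D$ and using $D|_D = H_{\mathbb{P}^2}(-3) = -3\ell$ where $\ell$ is the class of a line, one gets $D^2 = -3\ell$ (as a curve class) and $D^3 = 9$ (as a number), so $\ch(\mathcal{O}_D) = (0, D, \tfrac32 \ell, \tfrac32)$.

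\textbf{Choosing the polarization.} Next I would choose the ample divisor $H$ and the twist so the computation localizes on $D$. Write $\ch^\beta(\mathcal{O}_D) = e^{-\beta H}\ch(\mathcal{O}_D)$; since $\ch_0 = 0$ the twist only affects $\ch_2^\beta$ and $\ch_3^\beta$ via $\ch_2^\beta = \ch_2 - \beta H\cdot\ch_1$ and $\ch_3^\beta = \ch_3 - \beta H\cdot\ch_2 + \tfrac12\beta^2 H^2\cdot\ch_1$. The central charge is $Z_{\alpha,\beta}(\mathcal{O}_D[1]) = -Z_{\alpha,\beta}(\mathcal{O}_D)$, and expanding $-\int_X e^{-i\alpha\sqrt3 H}\ch^\beta$ one finds its imaginary part is (a positive multiple of) $H^2\cdot\ch_1^\beta(\mathcal{O}_D) = H^2\cdot D > 0$ for $\mathcal{O}_D$, hence the imaginary part of $Z_{\alpha,\beta}(\mathcal{O}_D[1])$ is negative unless\,\dots\ so actually I need to track signs carefully: the point is that after the shift $[1]$ one wants $Z$ landing on the positive real axis, which forces $H^2\cdot\ch_1^\beta = 0$ to be avoided and instead requires the phase to be exactly $1$. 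The cleanest route: show $\mathrm{Re}\,Z_{\alpha,\beta}(\mathcal{O}_D[1]) > 0$ and $\mathrm{Im}\,Z_{\alpha,\beta}(\mathcal{O}_D[1]) = 0$ simultaneously. The real part is $H\cdot\ch_2^\beta(\mathcal{O}_D) - \tfrac12\alpha^2 3 H^3\cdot\ch_0^\beta(\mathcal{O}_D) = H\cdot\ch_2^\beta(\mathcal{O}_D)$, which one can force positive by choosing $\beta$ sufficiently negative; the imaginary part being zero is automatic here since $\ch_0 = 0$ makes the relevant term vanish. Wait — for $\mathrm{ch}_0 = 0$ the object has slope $+\infty$, so I must instead argue via the tilt: the genuinely subtle issue is the membership.

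\textbf{The membership claim, which is the main obstacle.} The hard part is verifying $\mathcal{O}_D[1] \in \mathcal{A}_{\alpha,\beta}$. This requires a two-step tilting check: first that $\mathcal{O}_D[1] \in \Coh^\beta(X)$, equivalently that $\mathcal{O}_D \in \mathcal{F}_{\omega,B}$, i.e.\ $\mathcal{O}_D$ is $\mu_{\omega,B}$-semistable of non-positive slope — but $\mathcal{O}_D$ is torsion, hence has slope $+\infty$, so in fact $\mathcal{O}_D \in \mathcal{T}_{\omega,B}$ and $\mathcal{O}_D[1] \notin \Coh^\beta(X)$ directly; rather one must work with $\mathcal{O}_D$ itself inside $\Coh^\beta(X)$ and then tilt again, so the object landing in $\mathcal{A}_{\alpha,\beta}$ after a shift is $\mathcal{O}_D[1]$ precisely when $\mathcal{O}_D$ is $\nu_{\alpha,\beta}$-semistable with $\nu_{\alpha,\beta}(\mathcal{O}_D) = +\infty$, i.e.\ when $\mathcal{O}_D$ sits in the torsion part of the second tilt. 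Following \cite{mar16}, I would choose $H = a h_X + \varepsilon\,(\text{something})$ adapted to $D$ — concretely take $H$ so that $H|_D$ is a large multiple of $\ell$ and $\beta$ so that $\ch_1^\beta(\mathcal{O}_D(-kD))$ becomes suitably negative for the relevant line bundles $\mathcal{O}_D(-k)$ appearing as quotients — and then show that any destabilizing sub/quotient of $\mathcal{O}_D$ in $\Coh^\beta(X)$ is supported on $D$, reducing the stability question to $\mathbb{P}^2$, where the Bridgeland-stability picture is completely understood. The key inequality from \cite{mar16} is that for $D$ a plane one can make $\ch_3^\beta(\mathcal{O}_D) > \tfrac16\alpha^2 H^2\cdot\ch_1^\beta(\mathcal{O}_D)$ hold while $\mathcal{O}_D$ remains tilt-semistable; translating this into the language of $Z_{\alpha,\beta}$ and the heart $\mathcal{A}_{\alpha,\beta}$ gives exactly the stated conclusion. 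I expect the delicate points to be: (i) confirming the normal bundle and hence $\ch(\mathcal{O}_D)$; (ii) exhibiting an explicit $(H,\alpha,\beta)$ in the allowed range with no wall of the relevant type crossed, so that tilt-semistability of $\mathcal{O}_D$ genuinely holds; and (iii) checking the heart membership $\mathcal{O}_D[1] \in \mathcal{A}_{\alpha,\beta}$ against both cohomological conditions of the double tilt, which is where the argument of \cite{mar16} must be reproduced most carefully.
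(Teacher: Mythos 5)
Your opening computation of $\ch(\mathcal{O}_D)=(0,D,\tfrac32\ell,\tfrac32)$ is correct and matches the paper, and you correctly locate the two tasks (heart membership via tilt-semistability of $\mathcal{O}_D$, and positivity of the central charge). But the proposal then contains several genuine errors. First, you have the real and imaginary parts of $Z$ interchanged: for a torsion object, $\mathrm{Im}\,Z_{\alpha,\beta}(\mathcal{O}_D)$ is proportional to $H.\ch_2^{\beta}(\mathcal{O}_D)$ (the numerator of $\nu_{\alpha,\beta}$), which is \emph{not} ``automatically zero since $\ch_0=0$''; it vanishes only for one specific value of $\beta$. Forcing $Z(\mathcal{O}_D[1])$ onto the positive real axis therefore requires exactly $\nu_{\alpha,\beta}(\mathcal{O}_D)=0$, and it is this same condition (not $\nu=+\infty$, as you write --- that is impossible since $H^2.D>0$, and would in any case place $\mathcal{O}_D$ rather than $\mathcal{O}_D[1]$ in $\mathcal{A}_{\alpha,\beta}$) that puts $\mathcal{O}_D$ in the free part of the second tilt and hence $\mathcal{O}_D[1]$ in the heart. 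The paper arranges this by choosing $H$ so that $H.\ch_2^{H}(\mathcal{O}_D)=0$ and taking $\beta=1$, whence $Z_{\alpha,1}(\mathcal{O}_D[1])=\ch_3^{1}(\mathcal{O}_D)-\tfrac{3\alpha^2}{2}H^2.\ch_1^{1}(\mathcal{O}_D)>0$ for $\alpha$ small.

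The second, more serious gap is the choice of $H$ and the stability argument. The mechanism in the paper (following Martinez) is to set $L=3H'+(H'.l)D$, which is pulled back from the contraction of $D$, and take $H=mL-\tfrac12 D$: then $H|_D=\tfrac32\ell$ is \emph{fixed} while $H^3\sim m^3L^3\to\infty$. It is precisely this disparity that bounds the radius of every tilt-stability wall for $\mathcal{O}_D$ by $\tfrac{9}{8\,H^3\ch_0(A)}\ll 1$, leaving a nonempty window of $\alpha$ where $\mathcal{O}_D$ is still $\nu_{\alpha,1}$-stable (coming down from Gieseker stability at $\alpha\gg0$) and $Z_{\alpha,1}(\mathcal{O}_D[1])>0$. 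Your suggestion to make $H|_D$ a \emph{large} multiple of $\ell$ does not address this ratio, and your proposed stability argument --- that any destabilizing subobject of $\mathcal{O}_D$ in $\Coh^{\beta}(X)$ is supported on $D$, reducing to $\mathbb{P}^2$ --- is unjustified and in general false: a subobject $A\subset\mathcal{O}_D$ in the tilted heart sits in $0\to\mathcal{H}^{-1}(B)\to A\to\mathcal{O}_D\to\mathcal{H}^0(B)\to0$ with $\mathcal{H}^{-1}(B)$ torsion-free of positive rank, and the paper's wall bound depends essentially on $\ch_0(A)\neq0$ through $r(A)=H^3\ch_0(A)$. As written, the proposal defers all three points it labels ``delicate'' and the sketches offered for them would not go through; the argument needs to be redone along the lines above.
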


This proves the pair 
$(Z_{\alpha, \beta}, \mathcal{A}_{\alpha, \beta})$ 
is not a stability condition on $X$. 
In particular, Conjecture \ref{bg conj} does not 
hold by Theorem \ref{stab condi}. 

First we explain how to take the ample divisor $H$. 
Let $l \subset D$ be a line. Then 
\[
D^2=-3l, D^3=9. 
\]
Let $H'$ be any ample divisor on $X$ and put 
$L:=3H'+(H'.l)D$. 
Then $L$ is nef and big. 
Hence by the Kawamata-Shokurov basepoint-free theorem, 
some multiple of $L$ defines a birational morphism 
$f : X \to Y$, which only contracts $D$. 
In particular, there exists an ample $\mathbb{Q}$-divisor 
$A$ on $Y$ such that $L=f^{*}A$. 
On the other hand, $-D$ is $f$-ample since 
$\mathcal{O}_{D}(-3)=\omega_{D} \cong \mathcal{O}_{D}(D)$. 
Hence $H:=mL-\frac{1}{2}D$ is ample on $X$ for 
all $m \gg 0$. 

The Chern character of $\mathcal{O}_{D}$ is 
computed as : 
\[
H.\ch(\mathcal{O}_{D})
=\left(
0, \frac{9}{4}, \frac{9}{4}, \frac{3}{2}
\right). 
\]

Similarly, 
\[
H.\ch^{H}(\mathcal{O}_{D})
=\left(
0, \frac{9}{4}, 0, \frac{3}{8}
\right). 
\]

Before beginning the proof of Proposition \ref{counterex}, 
we recall the following aspect of 
the structure theorem of walls in tilt-stability: 

\begin{lem}
Let $v \in \Lambda:=\Image(H.\ch \colon K(X) \to \mathbb{Q}^4)$. 
Let $E \in \Coh^{\beta_{0}}(X)$ 
be an object with $H.\ch(E)=v$. 
Let $0 \to A \to E \to B \to 0$ be an exact sequence 
in $\Coh^{\beta_{0}}(X)$ 
with $H.\ch(E)=v$ 
which defines a wall $\mathcal{W}$ at 
$(\alpha_{0}, \beta_{0}) \in \mathcal{W}$. 
Then for every $(\alpha, \beta) \in \mathcal{W}$, 
we have $A, E, B \in \Coh^{\beta}(X)$. 
\end{lem}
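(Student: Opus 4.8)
My plan is to first pin down the geometry of $\mathcal{W}$ and then to prove that $A$, $E$, $B$ cannot leave the heart as $(\alpha,\beta)$ moves along $\mathcal{W}$. For the first part, I would write the defining equation of $\mathcal{W}$ explicitly. By definition $\mathcal{W}$ is the set of $(\alpha,\beta)$ with $\alpha>0$ and $\nu_{\alpha,\beta}(A)=\nu_{\alpha,\beta}(E)$; since $\ch$ is additive in $0\to A\to E\to B\to0$, this common value also equals $\nu_{\alpha,\beta}(B)$ wherever the imaginary parts $H^2.\ch_1^{\beta}$ do not vanish. Clearing denominators, $\nu_{\alpha,\beta}(A)=\nu_{\alpha,\beta}(E)$ becomes
\[
\left(H.\ch_2^{\beta}(A)-\tfrac{1}{2}\alpha^2H^3.\ch_0(A)\right)H^2.\ch_1^{\beta}(E)
=\left(H.\ch_2^{\beta}(E)-\tfrac{1}{2}\alpha^2H^3.\ch_0(E)\right)H^2.\ch_1^{\beta}(A),
\]
and expanding with $\ch^{\beta}=e^{-\beta H}.\ch$ one checks, by a routine computation, that the coefficient of $\alpha^2$ is independent of $\beta$ and equals the coefficient of $\beta^2$ in the remaining terms; thus the equation has the form $c(\alpha^2+\beta^2)+(\text{affine in }\beta)=0$. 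Hence, inside $\{\alpha>0\}$, $\mathcal{W}$ is either a semicircle with centre on the $\beta$-axis (if $c\neq0$), or a vertical line, or all of $\{\alpha>0\}$. In the vertical or degenerate cases $\beta$ is constant along $\mathcal{W}$ and equal to $\beta_0$, and since membership in $\Coh^{\beta}(X)$ depends only on $\beta$, the assertion is immediate from $A,E,B\in\Coh^{\beta_0}(X)$. So I may assume $\mathcal{W}$ is a semicircular arc whose $\beta$-coordinate runs over an open interval $I\ni\beta_0$.

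For the semicircular case I would argue by a connectedness argument on the arc $\mathcal{W}$. At $(\alpha_0,\beta_0)$ the common slope is finite, so $H^2.\ch_1^{\beta_0}$ of each of $A$, $E$, $B$ is strictly positive. The displayed equation then prevents any of the affine functions $H^2.\ch_1^{\beta}(A)$, $H^2.\ch_1^{\beta}(E)$, $H^2.\ch_1^{\beta}(B)$ from vanishing on $\mathcal{W}$: a vanishing would force either the vertical case, already excluded, or $Z_{\alpha,\beta}(E)=0$, the latter being impossible since, by Hodge index together with the weak Bogomolov inequality for tilt-semistable sheaves (used already in Lemma \ref{tilt-stab line bdl}), a nonzero object of $\Coh^{\beta}(X)$ with vanishing $H^2.\ch_1^{\beta}$ has central charge in $\mathbb{R}_{<0}$. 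With the imaginary parts kept positive, $A$, $E$, $B$ can only fall out of the heart through the torsion-pair conditions $\mathcal{H}^{0}(\cdot)\in\mathcal{T}_{\beta}$ and $\mathcal{H}^{-1}(\cdot)\in\mathcal{F}_{\beta}$; analysing these via the $\mu$-Harder--Narasimhan filtrations of $\mathcal{H}^{0}(\cdot)$ and $\mathcal{H}^{-1}(\cdot)$, and using the boundedness of tilt-semistable objects of a fixed numerical class, one sees that the locus of $(\alpha,\beta)\in\mathcal{W}$ with $A,E,B\in\Coh^{\beta}(X)$ is both open and closed in $\mathcal{W}$, hence equals $\mathcal{W}$.

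The routine part is the first step, identifying $\mathcal{W}$ as a semicircle or a line by a Chern-character computation. The main obstacle is the persistence step in the second paragraph — controlling the torsion pairs $(\mathcal{T}_{\beta},\mathcal{F}_{\beta})$ uniformly as $\beta$ varies over $I$, for which boundedness of families of tilt-semistable objects is the essential ingredient; this is exactly the content of the structure theorem for walls in tilt-stability recalled from \cite{bms14} (compare the surface case), and in practice I would cite it rather than redo the full argument.
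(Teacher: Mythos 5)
The paper disposes of this lemma with a citation (Lemma 6.3 of \cite{abch13}) and gives no argument, so the question is whether your sketch actually constitutes a proof. Your first paragraph is fine and standard: clearing denominators in $\nu_{\alpha,\beta}(A)=\nu_{\alpha,\beta}(E)$ does give an equation of the form $c(\alpha^2+\beta^2)+d\beta+e=0$, so $\mathcal{W}$ is a semicircle centred on the $\beta$-axis or a vertical line, and the vertical case is immediate. The problem is that the second paragraph, which is where all the content lies, does not close.

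Concretely, there are three gaps. First, the open-and-closed framing cannot work: for a fixed object $F$ the set of $\beta$ with $F\in\Coh^{\beta}(X)$ is the half-open interval $\bigl[\mu_{\max}(\mathcal{H}^{-1}(F)),\,\mu_{\min}(\mathcal{H}^{0}(F))\bigr)$ cut out by the slope-HN factors of its cohomology sheaves (the condition $\mathcal{F}_{\beta}$ is closed in $\beta$, the condition $\mathcal{T}_{\beta}$ is open), so the locus you want to identify with $\mathcal{W}$ is an interval that is neither open nor closed, and connectedness alone proves nothing. Second, your argument that $H^2.\ch_1^{\beta}(A)$, $H^2.\ch_1^{\beta}(E)$, $H^2.\ch_1^{\beta}(B)$ stay positive along $\mathcal{W}$ appeals to the positivity properties of the central charge on $\Coh^{\beta}(X)$ at points of $\mathcal{W}$ where membership in $\Coh^{\beta}(X)$ has not yet been established; that is circular. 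Third, boundedness of tilt-semistable objects of fixed class is not the relevant ingredient here (it is used for local finiteness of walls, not for this statement about three fixed objects). What actually has to be supplied is the numerical step: writing the semicircle as $(\beta-C)^2+\alpha^2=R^2$, one uses $\bar{\Delta}_{H}\geq 0$ for $A$, $B$, $E$ and for the HN factors of $\mathcal{H}^{0}$ and $\mathcal{H}^{-1}$ of each, together with the wall equation, to show $\mu_{\min}(\mathcal{H}^{0}(\cdot))\geq C+R$ and $\mu_{\max}(\mathcal{H}^{-1}(\cdot))\leq C-R$, so that the half-open membership intervals contain the full $\beta$-range $(C-R,C+R)$ of the wall. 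That computation (the nested-wall argument of \cite{abch13}, see also the appendix of \cite{bms14}) is exactly what your sketch replaces by a topological slogan, so as written the proof is incomplete; either carry out that estimate or cite it, as the paper does.
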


\begin{proof}
See for example, 
Lemma 6.3 of \cite{abch13}. 
\end{proof}

Now we can prove Proposition \ref{counterex}: 

\begin{proof}[of Proposition \ref{counterex}]

The argument is exactly same as \cite{mar16}. 
Since 
\[
Z_{\alpha, 1}(\mathcal{O}_{D}[1])
=\frac{3}{8}(1-\alpha^2), 
\]
$Z_{\alpha, 1}(\mathcal{O}_{D}[1])>0$ if and only if 
$\alpha<1$. 
On the other hand, since 
$\nu_{\alpha, 1}(\mathcal{O}_{D})=0$, 
$\mathcal{O}_{D}[1] \in \mathcal{A}_{\alpha, 1}$ 
if $\mathcal{O}_{D}$ is $\nu_{\alpha, 1}$-semistable. 
Hence it is enough to show that 
there exists $\alpha \in (0, 1)$ such that 
$\mathcal{O}_{D}$ is $\nu_{\alpha, 1}$-stable. 

Let us consider the wall $\mathcal{W}$ 
which is defined by a short exact sequence 
\[
0 \to A \to \mathcal{O}_{D} \to B \to 0. 
\]
Let us denote 
$(r(A), c(A), d(A), e(A)):=H.\ch(A)$, etc. 
The center of this semicircular wall $\mathcal{W}$ is 
\[
\frac{d(\mathcal{O}_{D})}{c(\mathcal{O}_{D})}=1. 
\]
Let $R$ be the radius of the wall $\mathcal{W}$. 
We will bound $R$ from above. 
Since $\mathcal{H}^{-1}(B) \in \mathcal{F}_{\beta}$ and 
$A \in \mathcal{T}_{\beta}$ for all 
$(\alpha, \beta) \in \mathcal{W}$, 
we have 
\[
\frac{c(\mathcal{H}^{-1}(B))}{r(\mathcal{H}^{-1}(B))} 
\leq 1-R, 
\ \frac{c(A)}{r(A)} \geq 1+R.  
\]
By using the exact sequence 
\[
0 \to \mathcal{H}^{-1}(B) \to A \to \mathcal{O}_{D} \to 
\mathcal{H}^{0}(B) \to 0, 
\] 
we get 
\[
r(A)=r(\mathcal{H}^{-1}(B)), 
\ c(A) \leq c(\mathcal{H}^{-1}(B))+\frac{9}{4}. 
\]

Using these inequalities, we have 
\[
R \leq \frac{9}{8r(A)}
=\frac{9}{8m^3\ch_{0}(A)L^3-9\ch_{0}(A)}<1 
\]

for $m>1$. 
Since $\mathcal{O}_{D}$ is Gieseker stable, it is 
$\nu_{\alpha, \beta}$-stable for every 
$\alpha \gg 0, \beta \in \mathbb{R}$. 
By the bound of the radius of semicircular walls, 
 we conclude that $\mathcal{O}_{D}$ is 
$\nu_{\alpha, 1}$-stable for 
\[
\frac{9}{(8m^3L^3-9)}<\alpha<1. 
\]

\end{proof}

%===========================================================================
%===========================================================================

%\bibliographystyle{plain}
%\bibliography{bg-ineq.bib}

\end{document}